\newtheorem{theorem}{Theorem}[section]
\newtheorem{corollary}{Corollary}
\newtheorem{lemma}[theorem]{Lemma}
\newtheorem{proposition}{Proposition}
\theoremstyle{definition}
\newtheorem{definition}[theorem]{Definition}
\newtheorem{remark}{Remark}
\newcommand{\ep}{\varepsilon}
\newcommand{\B}{\mathcal{B}}
\newcommand{\D}{\mathcal{D}}
\newcommand{\E}{\mathcal{E}}
\newcommand{\F}{\mathcal{F}}
\newcommand{\G}{\mathcal{G}}
\newcommand{\R}{\mathbb{R}}
\newcommand{\N}{\mathbb{N}}
\newcommand{\dd }{\,\mathrm{d}}
\newcommand{\X}{X}
\newcommand{\EW}{\mathbb{E}}
\title{From Feynman-Kac Formulae to Numerical Stochastic Homogenization in Electrical Impedance Tomography}
\author{Petteri Piiroinen}
\address{Petteri Piiroinen\\ÊDepartment of Mathematics and Statistics\\  University of Helsinki \\ \phantom{Ho}FI-00014 Helsinki, Finland}
\email{petteri.piiroinen@helsinki.fi}
\author{Martin Simon}
\address{Martin Simon\\ÊInstitute of Mathematics\\ Johannes Gutenberg University\\ 55099 Mainz, \phantom{Ho}Germany} 
\email{simon@math.uni-mainz.de}
\date{\today}
\chardef\bslchar=`\\ 
\providecommand{\qedsymbol}{\leavevmode
  \hbox to.77778em{%
  \hfil\vrule
  \vbox to.675em{\hrule width.6em\vfil\hrule}%
  \vrule\hfil}}
\gdef\?#1>{{\normalfont$\langle$\textup{#1}$\rangle$}}
\gdef\0{\relax}
\def\<#1>{{\normalfont$\langle$\textup{#1}$\rangle$}}
\def\latex/{{\protect\LaTeX}}
\begin{document}
\maketitle
\markboth{Petteri Piiroinen and Martin Simon}{From Feynman-Kac formulae to stochastic homogenization in EIT}

\begin{abstract}
In this paper, we use the theory of symmetric Dirichlet forms to derive Feynman-Kac formulae for the forward problem of electrical impedance tomography with possibly anisotropic, merely measurable conductivities corresponding to different electrode models on bounded Lipschitz domains. Subsequently, we employ these Feynman-Kac formulae to rigorously justify stochastic homogenization in the case of a stochastic boundary value problem arising from an inverse anomaly detection problem. 
Motivated by this theoretical result, we prove an estimate for the speed of convergence of the projected mean-square displacement of the underlying process which may serve as the theoretical foundation for the development of new scalable stochastic numerical homogenization schemes.
\end{abstract}

\section{Introduction}
Electrical impedance tomography (EIT) aims to reconstruct the unknown conductivity $\kappa$ in the conductivity equation 
\begin{equation}\label{eqn:con}
\nabla\cdot(\kappa\nabla u)=0\quad\text{in }D
\end{equation}
from current and voltage measurements on the boundary of the domain $D$. This inverse conductivity problem is known to be severely ill-posed, that is, its solution is extremely sensitive with respect to measurement and modeling errors. As a result, EIT suffers from inherent low resolution and due to this limitation, many practical applications focus on the detection of conductivity anomalies in a known background conductivity rather than conductivity imaging. In the mathematical modeling of such inverse anomaly detection problems, randomness typically reflects a lack of precise information about the meso- and microstructure of the heterogeneous background conductivity, which may fluctuate on many scales. Recently, the second author has proposed a novel method for the detection of conductivity anomalies in a random background conductivity which is based on homogenization of the underlying stochastic boundary value problem, cf. \cite{Simon}. 

Although the homogenization theory for elliptic divergence form operators is well-developed, cf., e.g., \cite{Bens,Pa,Pa1,Zikov1}, the numerical approximation of the \emph{effective conductivity} in the random setting still poses major challenges. The commonly used deterministic methods based on a discretization of the so-called \emph{auxiliary problem} have two main drawbacks. First, the auxiliary problem is formulated on the whole space $\R^d$ and second, it has to be solved for almost every realization of the random medium. That is, truncations of the auxiliary problem have to be considered and choosing an appropriate spatial truncation with appropriate boundary conditions is a delicate issue, cf. \cite{Bourgeat}. Moreover, in practically relevant cases, such as high contrast digitized random media, it is extremely difficult to solve the corresponding variational problems by usual deterministic methods, such as the finite element or the finite difference method, due to the behavior of the solutions near the corner points. Therefore, numerical approximation of the effective conductivity can be prohibitively expensive in terms of computation time. As a matter of fact, practitioners often choose to avoid these computations at all and rather content themselves with theoretical bounds, cf., e.g., \cite{Torquato}. However, it has been reported in the physical literature that the shortcomings of the standard deterministic methods can be circumvented by using continuum micro-scale Monte Carlo simulation of certain diffusion processes evolving in random media instead, cf., e.g., \cite{Keskin,Simonov,Torquato1,Torquato2,Torquato3}. In this work we give a rigorous mathematical justification for homogenizing the EIT forward problem using such methods by studying the interconnection between reflecting diffusion processes and certain boundary value problems for the conductivity equation. More precisely, we derive \emph{Feynman-Kac formulae} for solutions of the deterministic conductivity equation (\ref{eqn:con}) posed on a bounded domain $D\subset\mathbb{R}^d,d\geq 2$, with Lipschitz boundary $\partial D$ and possibly anisotropic uniformly elliptic and uniformly bounded conductivity subject to different boundary conditions modeling electrode measurements. Subsequently, we employ the Feynman-Kac formulae to prove a homogenization result for the corresponding stochastic boundary value problem which justifies the use of stochastic numerical homogenization schemes based on simulation of the underlying diffusion processes in order to approximate the effective conductivity. Finally, we prove an estimate for the speed of convergence of the projected mean-square displacement of the underlying diffusion processes. The main advantage of the presented approach to numerical homogenization, beside its inherent parallelism, is that its convergence rate is dimension-independent and its computational cost grows only linearly with the dimension.

It is well known, that reflecting diffusion processes generated by non-diver-gence form operators with smooth coefficients on 
bounded, smooth domains, are \emph{Feller processes} satisfying Skorohod type stochastic differential equations. 
The construction in the case of divergence form operators with merely measurable
coefficients requires the theory of \emph{symmetric Dirichlet forms} which has its origin in the \emph{energy method} 
used by Dirichlet to address the boundary value problem in classical electrostatics that was subsequently named after him. When $D$ is a bounded Lipschitz domain, Bass and Hsu~\cite{Bass}
constructed the reflecting Brownian motion living on $\overline{D}$ by
showing that the so-called \emph{Martin-Kuramochi boundary} coincides with the
Euclidean boundary in this case. A general diffusion process on a
bounded Lipschitz domain, even allowing locally a finite number of
H\"older cusps, was first constructed by Fukushima and Tomisaki
\cite{FukushimaTomisaki}. In this work, we use such a Dirichlet form construction in order to derive
Feynman-Kac representation formulae for the solutions of 
Neumann, respectively Robin, boundary value problems modeling EIT measurements. Probabilistic approaches to both, parabolic and elliptic boundary value
problems for second order differential operators have been studied by
many authors, starting with Feynman's Princeton thesis \cite{Feynman}
and the article \cite{Kac} by Kac. The probabilistic approach to the
Dirichlet problem for a general class of second-order elliptic operators
with merely measurable coefficients, even allowing singularities of a
certain type, was elaborated by Chen and Zhang \cite{ChenZhang}; see
also Zhang's paper \cite{Zhang}. However, there are only few works that
treat Feynman-Kac representation formulae for Neumann or Robin type
boundary conditions.  Moreover, the approaches existing in the literature
consider either the Laplacian, see, e.g., \cite{Bass,Brosamler,Hsu}, or
non-divergence form operators with smooth coefficients, see, e.g.,
\cite{Freidlin, Papanicolaou, Pardoux}. 
For the particular case of the conductivity equation on a
bounded Lipschitz domain, we generalize both, the Feynman-Kac formula
for the Robin problem on domains with boundary of class $C^3$ for an
isotropic $C^{2,\gamma}$-smooth conductivity, $\gamma>0$, obtained by
Papanicolaou \cite{Papanicolaou} as well as the representation
obtained by Bench{\'e}rif-Madani and Pardoux \cite{Pardoux} for the
Neumann problem under similar regularity assumptions. While both of
the aforementioned approaches use stochastic
differential equations and It\^{o} calculus, our approach is based on the 
theory of symmetric Dirichlet forms, following the pioneering work \cite{Bass} for the reflecting Brownian motion
by Bass and Hsu.  We derive in this work Feynman-Kac formulae for both, the Robin boundary value problem, 
corresponding to the so-called \emph{complete electrode model}, as well as the Neumann boundary value problem, corresponding to the so-called 
\emph{continuum model}. Both formulae are
valid for possibly anisotropic, uniformly elliptic and uniformly bounded conductivities with 
merely measurable coefficients on bounded Lipschitz domains. During the preparation of this work we became aware of the paper
\cite{ChenZhang2} by Chen and Zhang, where a probabilistic approach to
some mixed boundary value problems with singular coefficients is
derived.  In contrast to our setting, however, the mixed boundary
condition studied there results from a singular lower-order
term of the differential operator.

Homogenization of reflected stochastic differential equations and partial differential equations with Neumann boundary conditions, respectively, in half- space type domains have been studied for periodic coefficients in \cite{Barles,Bens,Tanaka} and for random divergence form operators with smooth coefficients in \cite{Rh}. In contrast to boundary value problems with homogeneous Dirichlet boundary conditions, these problems are non-translation invariant, which excludes the standard stochastic homogenization approach via the so-called \emph{environment as viewed from the particle}. Employing the Feynman-Kac formula in conjunction with a recently obtained invariance principle for reflecting diffusion processes associated with random divergence form operators with merely measurable coefficients due to Chen, Croydon and Kumagai \cite{Chen1} we provide a homogenization result for a stochastic forward problem built on the complete electrode model. 
Clearly, such a result motivates the derivation of stochastic numerical homogenization schemes for the approximation of the effective conductivity which are based on simulation of the underlying diffusion processes. However, the convergence analysis of such a method requires a quantitative convergence result that is stronger than the usual qualitative results obtained from the central limit theorem for martingales. As in the case of a discrete random walk in random environment, cf. Gloria and Mourrat \cite{Gloria}, it turns out that the behavior at the bottom of the spectrum of the infinitesimal generator of the environment as viewed from the particle process, projected on a suitably chosen function, yields bounds on the approximation error. This spectral behavior has been the subject of recent interest. Most notably, Gloria, Neukamm and Otto \cite{GloriaNeukamm} have obtained optimal estimates in the discrete case which have been carried over to the continuum case by Gloria and Otto \cite{GloriaOtto}. The main difficulty in obtaining such estimates for diffusion processes evolving in random media arises from the lack of a Poincar\'{e} inequality for the horizontal derivative in the space of square integrable functions on the probability space which corresponds to the random medium. Therefore, in contrast to the periodic case, where the Poincar\'e inequality on the torus is available, one can not expect a spectral gap in the random case. Still, it has been shown that the bottom of the spectrum is sufficiently \lq\lq thin\rq\rq. Using these estimates together with a classical argument due to Kipnis and Varadhan \cite{Kipnis}, we obtain an estimate for the speed of convergence of the projected mean-square displacement of the underlying diffusion process in a random medium to its limit. Qualitative results of this kind have been obtained by Kipnis and Varadhan in the case of discrete random walks in random environments and by De Masi, Ferrari, Goldstein and Wick \cite{DeMasi} in the continuum case, whereas qualitative results in the case of discrete random walks have been proved more recently by Gloria and Mourrat \cite{Gloria} and Egloffe, Gloria, Mourrat and Nguyen \cite{Egloffe}. Finally, we refer to the paper \cite{Mourrat} by Mourrat which initiated the idea of using the Kipnis and Varadhan argument in order to obtain quantitative results.

The rest of the paper is structured as follows: We start in Section \ref{section2} by 
briefly introducing our notation. In Section \ref{section1a}, we recall the modeling of electrode measurements in EIT as well as the modeling of random heterogeneous media. Moreover, we introduce the stochastic forward problem we are interested in. In Section \ref{section3}, we describe the construction of reflecting diffusion processes via Dirichlet form theory and in Section \ref{sectionSk} we derive Skorohod decompositions for two practically relevant classes of conductivities. Subsequently, in Section \ref{section4}, the Feynman-Kac formulae for the deterministic boundary value problems will be derived. Then in Section \ref{section5} we study the interconnection between Feynman-Kac formulae, stochastic homogenization and stochastic numerics. Finally, we conclude with a brief summary of our results.

\section{Notation}
\label{section2}
Let $D$ denote a bounded Lipschitz domain in $\R^d$, $d\geq 2$, with connected complement and Lipschitz parameters
$(r_D,c_D)$, i.e., for every $x\in\partial D$ we have after rotation and translation that $\partial D\cap B(x,r_D)$ is the graph of a Lipschitz function in the first $d-1$ coordinates with Lipschitz constant no larger than $c_D$ and $D\cap B(x,r_D)$ lies above the graph of this function. Moreover, we set $\R^d_-:=\{x\in\R^d: x\cdot \nu<0\}$, with $\nu=e_d$ the outward unit normal on $\R^{d-1}$, where we identify the boundary of $\R^d_-$ with $\R^{d-1}$, with straightforward abuse of notation. 

For Lipschitz domains, there exists a unique outward unit normal vector $\nu$ a.e. on $\partial D$ so that the real Lebesgue spaces $L^p(D)$ and $L^p(\partial D)$ can be defined in the standard manner with the usual $L^p$ norms $\lvert\lvert\cdot\rvert\rvert_p$, $p=1,2,\infty$. The standard $L^2$ inner-products are denoted by $\left<\cdot,\cdot\right>$ and $\left<\cdot,\cdot\right>_{\partial D}$, respectively. The $d$-dimensional Lebesgue measure is denoted by $m$, the $(d-1)$-dimensional Lebesgue surface measure is denoted by $\sigma$ and $\lvert\cdot\rvert$ denotes the Euclidean norm on $\R^d$. 

By $(\Gamma,\G,\mathcal{P})$ we always mean a complete probability space corresponding to a random medium. We use the notation $\omega$ for an arbitrary element of $\Gamma$ and $\mathbb{M}$ for the expectation with respect to the probability measure $\mathcal{P}$. We use bold letters to denote functions on $(\Gamma,\G,\mathcal{P})$, while we use italic letters for the corresponding realizations on $\R^d\times\Gamma$.
The canonical probability space corresponding to diffusion processes evolving in a deterministic medium starting in $x$ is denoted $(\Omega,\F,\mathbb{P}_x)$ and the expectation with respect to $\mathbb{P}_x$ is denoted $\EW_x$. If the process is evolving in a random medium, we indicate this with a superscript $\omega$ for the probability measure, i.e., the measure $\mathbb{P}_x^{\omega}$ corresponds to the particular realization $\omega$ of the medium.
Finally, the product probability space corresponding to the \emph{annealed measure }Ê$\overline{\mathbb{P}}:=\mathcal{P}\,\mathbb{P}_0^{\omega}$ on $\overline{\Omega}:=\Gamma\times\Omega$, which is obtained by integrating with respect to the measure $\mathbb{P}_0^{\omega}$ and subsequent averaging over the realizations of the random medium, is denoted $(\overline{\Omega},\overline{\F},\overline{\mathbb P})$. The expectation with respect to $\overline{\mathbb{P}}$ is denoted $\overline{\EW}$.

All functions in this work will be real-valued and derivatives are understood in distributional sense. We use a diamond subscript to denote subspaces of the standard Sobolev spaces containing functions with vanishing mean and interpret integrals over $\partial D$ as dual evaluations with a constant function, if necessary. For example, we will frequently use the spaces
\begin{equation*}
H^{\pm 1/2}_{\diamond}(\partial D):=\Big\{\phi\in H^{\pm 1/2}(\partial D):\left<\phi,1\right>_{\partial D}=0\Big\}
\end{equation*}
and
\begin{equation*}
H^{1}_{\diamond}(D):=\Big\{\phi\in H^{1}(D):\left<\phi,1\right>=0\Big\}.
\end{equation*}

Moreover, we will frequently assume that $\partial D$ is partitioned into two disjoint parts, $\partial_1 D$ and $\partial_2 D$. We denote by $H_0^1(D\cup\partial_1 D)$ the closure of $C_c^{\infty}(D\cup\partial_1 D)$, the linear subspace of $C^{\infty}(\overline{D})$ consisting of functions $\phi$ such that $\mathrm{supp}(\phi)$ is a compact subset of $D\cup\partial_1 D$, in $H^1(D)$. Moreover, we define the Bochner space $L^2(\Gamma;H^1_{0}(D\cup\partial_1 D))$ 
\begin{equation*}
=\Big\{\boldsymbol{\phi}:\Gamma\rightarrow H^1_{0}(D\cup\partial_1 D):\int_{\Gamma}\lvert\lvert \boldsymbol{\phi}(\cdot,\omega)\rvert\rvert^2_{H^1_{0}(D\cup\partial_1 D)}\dd\mathcal{P}(\omega)<\infty\Big\},
\end{equation*}
see, e.g.,~\cite{Babuska} for properties of this space. 

For the reason of notational compactness, we use the Iverson brackets: Let $S$ be a mathematical statement, then 
\begin{equation*}\left[S\right]=\begin{cases}
1,\quad&\text{if }S\text{ is true}\\
0,\quad&\text{otherwise}.
\end{cases}
\end{equation*}
We also use the Iverson brackets $[x\in B]$ to denote the indicator function of a set $B$, which we abbreviate by $[B]$ if there is no danger of confusion. 

In what follows, all unimportant constants are denoted $c$, sometimes with additional subscripts, and they may vary from line to line. 


\section{Electrical impedance tomography forward problems}\label{section1a}
\subsection{Modeling of electrode measurements}
We assume that the, possibly anisotropic, conductivity is defined by a symmetric,
matrix-valued function $\kappa:D\rightarrow \mathbb{R}^{d\times d}$ with
components in $L^{\infty}(D)$ such that $\kappa$ is uniformly bounded and uniformly elliptic, i.e., there exists some constant $c>0$ such that
\begin{equation}\label{eqn:ellipticity}
c^{-1}\lvert\xi\rvert^2\leq \xi\cdot\kappa (x)\xi\leq
c\lvert\xi\rvert^2,\quad \text{for every
}\xi\in\mathbb{R}^d\text{ and a.e. }x\in D.  
\end{equation}

The forward problem of electrical impedance tomography can be described by different measurement
models. In the so-called \textit{continuum model}, the conductivity equation (\ref{eqn:con}) is equipped with a co-normal boundary condition 
\begin{equation}\label{eqn:continuum}
\partial_{\kappa\nu}u:= \kappa\nu\cdot\nabla u\vert_{\partial D}=f\quad\text{on }\partial D,
\end{equation}
where $f$ is a measurable function modeling the signed density of the outgoing current. The boundary value problem (\ref{eqn:con}), (\ref{eqn:continuum}) has a solution if and only if 
\begin{equation}\label{eqn:compatibility}
\left<f,1\right>_{\partial D}=0.
\end{equation}
Physically speaking, this means that the current must be conserved. Given an appropriate function $f$, the solution to (\ref{eqn:con}), (\ref{eqn:continuum}) is unique up to an additive constant, which physically corresponds to the choice of the ground level of the potential. If $f\in H^{-1/2}_{\diamond}$, then there exists a unique equivalence class of functions $u\in H^1(D)/\R$ that satisfies the weak formulation of the boundary value problem
\begin{equation*}
\int_D\kappa\nabla u\cdot\nabla v\dd x=\left <f, v\vert_{\partial D}\right >_{\partial D}\quad \text{for all }v\in H^1(D)/\R,
\end{equation*} 
where $v\vert_{\partial D}:=\gamma v$ and $\gamma:H^1(D)/\R\rightarrow H^{1/2}(\partial D)/\R=(H^{-1/2}_{\diamond}(\partial D))'$ is the standard trace operator. Note that we occasionally write $v$ instead of $v\vert_{\partial D}$ for the sake of readability.

In practical EIT measurement, a number of electrodes, denoted $E_1,...,E_N\subset\partial D$, are attached on the boundary of the object $D$. These electrodes are modeled by disjoint surface patches given by simply connected subsets of $\partial D$, each having a Lipschitz boundary curve. 
The most accurate forward model for real-life EIT is the so-called \textit{complete electrode model} which takes into account the fact that during electrode measurements there is a \textit{contact impedance }caused by a thin, highly resistive layer at the electrode object interface. It was demonstrated experimentally that the complete electrode model can correctly predict measurements up to instrument precision, cf. \cite{Somersalo}. 
For a given voltage pattern $U\in \R^N$ the boundary conditions for the complete electrode model are given by
\begin{equation}\label{eqn:cem}
\kappa\nu\cdot\nabla u\vert_{\partial D}+gu\vert_{\partial D}=f\quad\text{on}\ \partial D, 
\end{equation}
where the functions $f,g:\partial D\rightarrow\mathbb{R}$ are defined by
\begin{equation}\label{eqn:functions}
f(x):=\frac{1}{z(x)}\sum_{l=1}^NU_l[E_l],\quad g(x):=\frac{1}{z(x)}\sum_{l=1}^N [E_l]
\end{equation}
and the contact impedance $z:\partial D\rightarrow\R$ is assumed to be a
piecewise continuous function, with interfaces that are of zero surface
measure, satisfying
\begin{equation*}
0<c_0\leq z\leq c_1\quad\text{a.e. on }\partial D.
\end{equation*}
For a given voltage pattern $U\in\mathbb{R}^N$ satisfying the grounding condition
\begin{equation}\label{eqn:ground}
\sum_{l=1}^N U_l = 0,
\end{equation}
the equations (\ref{eqn:con}) and (\ref{eqn:cem}) define the electric
potential $u\in H^1(D)$ uniquely, cf.~\cite{Somersalo} and the variational form of the boundary value problem (\ref{eqn:con}), (\ref{eqn:cem}) reads as follows: Given $U\in\R^N$ satisfying (\ref{eqn:ground}), find $u\in H^1(D)$ such that 
\begin{equation}
\int_D\kappa\nabla u\cdot\nabla v\dd x+\left<gu\vert_{\partial D},v\vert_{\partial D}\right>_{\partial D}=\left<f, v\vert_{\partial D}\right>_{\partial D}\quad\text{for all }v\in H^1(D).
\end{equation}
Knowledge of $u$ yields the corresponding electrode current vector $J\in\R^N$ via
\begin{equation}\label{eqn:cem0}
J_l=\int_{E_l}\partial_{\kappa\nu} u\dd\sigma(x),\quad 1\leq l\leq N.
\end{equation}

\subsection{A stochastic forward problem}\label{sec:stochan}\label{subsec:proba}
The basic geometric setting of the stochastic problem we are interested in is as follows: Assume that the model domain is given by the lower hemisphere $$D:=B(0,R)\cap\R^d_-,\quad R>0$$ and that $\partial D$ is partitioned into two disjoint parts, namely the \emph{accessible boundary }$\partial_1 D:=\partial D\cap \R^{d-1}$ and the \emph{inaccessible boundary }$\partial_2 D:=\partial D\backslash\partial_1 D$, respectively. Such a setting is found for instance in geophysical applications, where measurements can only be taken on the surface, cf., e.g., \cite{Simon}.

Let $(\Gamma,\G,\mathcal{P})$ be a probability space and let $\boldsymbol{\Theta}:\Gamma\rightarrow\Gamma$ denote an ergodic $d$-dimensional \emph{dynamical system}, i.e., a family of automorphisms $\{\boldsymbol{\Theta}_x,x\in\R^d\}$ which satisfies the following conditions:
\begin{enumerate}
\item[(i)] The family $\{\boldsymbol{\Theta}_x,x\in\R^d\}$ is a group, i.e., $\boldsymbol{\Theta}_0=\operatorname{id}$ and
\begin{equation*}
\boldsymbol{\Theta}_{x+y}=\boldsymbol{\Theta}_x\boldsymbol{\Theta}_y\quad\text{for all }x,y\in\R^d;
\end{equation*} 
\item[(ii)] the mappings $\boldsymbol{\Theta}_x:\Gamma\rightarrow\Gamma$, $x\in\R^d$, preserve the measure $\mathcal{P}$ on $\Gamma$, i.e., for every $B\in\G$, $\boldsymbol{\Theta}_x B$ is $\mathcal{P}$-measurable and
\begin{equation*}
\mathcal{P}(\boldsymbol{\Theta}_xB)=\mathcal{P} (B);
\end{equation*}
\item[(iii)] for every measurable function $\boldsymbol{\phi}$ on $(\Gamma,\G,\mathcal{P})$, the function $(x,\omega)\mapsto\boldsymbol{\phi}(\boldsymbol{\Theta}_x\omega)$ is a measurable function on $(\R^d\times\Gamma,\B(\R^d)\otimes\G,m\times\mathcal{P})$, where $\B(\R^d)\otimes\G$ denotes the sigma-algebra generated by the measurable rectangles;
\item[(iv)] the family $\{\boldsymbol{\Theta}_x,x\in\R^d\}$ is ergodic, i.e., $\boldsymbol{\phi}(\boldsymbol{\Theta}_x\omega)=\boldsymbol{\phi}(\omega)$ for all $x\in\R^d$ and $\mathcal{P}$-a.e. $\omega\in\Gamma$ implies $\boldsymbol{\phi}=\operatorname{const}$ $\mathcal{P}$-a.e.
\end{enumerate}

Throughout this work we assume that the conductivity random field $$\{\kappa(x,\omega),(x,\omega)\in\R^d\times\Gamma\}$$ is the \emph{stationary extension} on $\R^d\times\Gamma$Êof some function $\boldsymbol{\kappa}\in L^{2}(\Gamma;\R^{d\times d})$, that is,
\begin{equation}\label{eqn:stat_ex}
(x,\omega)\mapsto\kappa(x,\omega)=\boldsymbol{\kappa}(\boldsymbol{\Theta}_x\omega).
\end{equation} 
Note that if  $\kappa$ can be written in the form (\ref{eqn:stat_ex}) with a dynamical system 
$\{\boldsymbol{\Theta}_x,x\in\R^d\}$ which satisfies (i)-(iii), then it is automatically \emph{stationary} with respect to $\mathcal{P}$, i.e., for every finite collection of points $x^{(i)}$, $i=1,...,k$, and any $h\in\R^d$ the joint distribution of 
\begin{equation*}
\kappa(x^{(1)}+h,\omega),...,\kappa(x^{(k)}+h,\omega)
\end{equation*}
under $\mathcal{P}$ is the same as that of
\begin{equation*}
\kappa(x^{(1)},\omega),...,\kappa(x^{(k)},\omega).
\end{equation*}
Even if it is not explicitly stated, we always assume that the conductivity random field may be written in the form (\ref{eqn:stat_ex}), where the underlying dynamical system 
$\{\boldsymbol{\Theta}_x,x\in\R^d\}$ satisfies conditions (i)-(iv).
Moreover, we will explicitly state if the conductivity random field $\{\kappa(x,\omega),(x,\omega)\in\R^d\times\Gamma\}$ satisfies one of the following assumptions:
\begin{enumerate}
\item[(A1)] $\boldsymbol{\kappa}\in L^{2}(\Gamma;L^{\infty}(\R^d;\R^{d\times d}))$ and the random field is strictly positive and uniformly bounded, that is, there exists a constant $c>0$ such that for every $\xi\in\R^d$ and a.e. $x\in \R^d$
\begin{equation*}
\mathcal{P}\Big\{\omega\in\Gamma: c^{-1} \lvert \xi\rvert^2\leq \xi\cdot\kappa(x,\omega)\xi\leq c\lvert \xi\rvert^2\Big\}=1.
\end{equation*}
\item[(A2)] $\{\kappa(x,\omega), (x,\omega)\in\R^d\times\Gamma\}$ satisfies the \emph{spectral gap property}, cf. \cite{GloriaOtto}: There exist constants $\rho>0$ and $r<\infty$ such that for all measurable functions on $\Big\{\kappa:\mathbb{R}^d\rightarrow\{\kappa_0\in\mathbb{R}^{d\times d}:\lvert\kappa_0\xi\rvert\leq\lvert\xi\rvert,c\lvert\xi\rvert^2\leq\xi\cdot\kappa_0\xi\text{ for all }\xi\in\mathbb{R}^d\}\Big\}$
\begin{equation*}
\mathbb{V}\boldsymbol{\phi}\leq\frac{1}{\rho}\mathbb{M}\int_{\mathbb{R}^d}\Big(\operatorname{osc}_{\kappa\vert_{B(x,r)}}\boldsymbol{\phi}\Big)^2\,\mathrm{d}x,
\end{equation*}
where we have set 
\begin{equation*}\begin{split}
\Big(\operatorname{osc}_{\kappa\vert_{B(x,r)}}\boldsymbol{\phi}\Big):=&\sup\Big\{\boldsymbol{\phi}(\tilde{\kappa}):\tilde{\kappa}\in\Omega,\tilde{\kappa}\vert_{\mathbb{R}^d\backslash B(x,r)}=\kappa\vert_{\mathbb{R}^d\backslash B(x,r)}\Big\} \\
&-\inf\Big\{\boldsymbol{\phi}(\tilde{\kappa}):\tilde{\kappa}\in \Omega,\tilde{\kappa}\vert_{\mathbb{R}^d\backslash B(x,r)}=\kappa\vert_{\mathbb{R}^d\backslash B(x,r)}\Big\}.
\end{split}
\end{equation*}
\end{enumerate}

To account for the highly heterogeneous properties of the background medium, the latter is modeled using the conductivity random field with appropriate scaling by a small parameter $\ep>0$, i.e.,
\begin{equation*}
\kappa_{\ep}(\cdot,\cdot):\R^d\times \Gamma\rightarrow\R^{d\times d},\quad\kappa_{\ep}(x,\omega):=\kappa(x/\ep,\omega).
\end{equation*}
If the correlation length of the conductivity random field $\kappa$ is, say $1$, then the correlation length of the scaled version $\kappa_{\ep}$ is of order $\ep$ and for $\ep\ll 1$ we obtain thus a rapidly oscillating random field.
\subsection{Stochastic forward and inverse problem}

Now let us introduce a stochastic forward model based on the complete electrode model: We search for a random field $\{\boldsymbol{u}_{\ep}(x,\omega),(x,\omega)\in\overline{D}\times\Gamma\}$ for the electrical potential with $u_{\ep}\in L^2(\Gamma;H^1_{0}(D\cup\partial_1 D))$
such that the stochastic conductivity equation
\begin{equation}\label{eqn:random_cond}
\nabla\cdot(\kappa_{\ep}\nabla u_{\ep})=0\quad\text{in }D\times\Gamma
\end{equation}
subject to the boundary conditions
\begin{equation}\label{eqn:fp2}
\begin{aligned}
&\kappa_\ep\nu\cdot\nabla u_{\ep}\vert_{\partial_1 D}+gu_{\ep}\vert_{\partial_1 D}=f\quad&\text{on }\partial_1 D\times\Gamma\\
&u_{\ep}\vert_{\partial_2 D}=0\quad&\text{on }\partial_2 D\times\Gamma
\end{aligned}
\end{equation}
is satisfied $\mathcal{P}$-a.s. 
The variational formulation of the forward problem is to find $\boldsymbol{u}_{\ep}\in L^2(\Gamma;H^1_{0}(D\cup\partial_1 D))$ such that 
\begin{equation}\label{eqn:variational1}
\mathbb{M}\Big\{\int_{D}\kappa_{\ep}\nabla u_{\ep}\cdot\nabla v\dd x+\left<g u_{\ep},v\right>_{\partial_1 D}\Big\}=\mathbb{M}\left<f,v\right>_{\partial_1 D}
\end{equation} 
for all $\boldsymbol{v}\in L^2(\Gamma;H^1_{0}(D\cup\partial_1 D))$.
For a given voltage pattern $U\in\R^N$ the corresponding measurement data is given by the random current pattern $J(\ep,\omega)=(J_1(\ep,\omega),...,J_N(\ep,\omega))^T$, defined for $\mathcal{P}$-a.e. $\omega\in\Gamma$ by
\begin{equation}\label{eqn:curr}
J_l(\ep,\omega)=\frac{1}{\lvert E_l\rvert}\int_{E_l}\kappa_{\ep}(x,\omega)\nu\cdot\nabla u_{\ep}(x,\omega)\vert_{\partial_1 D}\dd \sigma(x),\quad l=1,...,N.
\end{equation}
Due to the assumption (A1), the well-posedness of the variational formulation follows from a straightforward application of the Lax-Milgram theorem. Moreover, standard arguments from measure theory show that the solution to the stochastic forward problem (\ref{eqn:random_cond}), (\ref{eqn:fp2}) also solves (\ref{eqn:variational1}), cf.~\cite{Babuska}.


\section{Reflecting diffusion processes}\label{section3}
In his seminal paper \cite{Fukushima}, Fukushima established a
one-to-one correspondence between regular symmetric Dirichlet forms and
symmetric Hunt processes, which is the foundation for the construction of stochastic processes via 
Dirichlet form techniques. Therefore we assume that the reader is familiar with the theory of symmetric Dirichlet forms, as elaborated for instance in the monograph \cite{Fukushimaetal}.

Let us consider the following symmetric bilinear forms on $L^2(D)$:
\begin{equation}\label{eqn:Dirichlet}
\E(v,w):=\int_{D}\kappa\nabla v(x)\cdot\nabla w(x)\dd  x,\quad
v,w\in\D(\E):=H^1(D)
\end{equation}
and for the particular case $\kappa\equiv 1/2$, which is of special importance, we set
\begin{equation}\label{eqn:Dirichlet1}
\E^{\text{BM}}(v,w):=\frac{1}{2}\int_{D}\nabla v(x)\cdot\nabla w(x)\dd  x,\quad
v,w\in\D(\E^{\text{BM}}):=H^1(D).
\end{equation} 

The pair $(\E, \D(\E))$ defined by (\ref{eqn:Dirichlet}) is a strongly local, regular symmetric Dirichlet form on $L^2(D)$. In particular, there exist an $\E$-exceptional set
$\mathcal{N}\subset\overline{D}$ and a conservative diffusion process
$\X=(\Omega,\F,\{\X_t,t\geq 0\},\mathbb{P}_{x})$, starting from 
$x\in\overline{D}\backslash \mathcal{N}$ such that 
$\X$ is associated with $(\E,\D(\E))$. Without loss of generality let us assume that $\X$ is defined on the \emph{canonical
sample space} $\Omega=C([0,\infty);\overline{D})$. 
It is well-known that the symmetric Hunt process associated with (\ref{eqn:Dirichlet1}) is the
reflecting Brownian motion. Therefore, we call the symmetric Hunt process associated with 
(\ref{eqn:Dirichlet}) a \emph{reflecting diffusion process}. 

Let us briefly recall the concept of the \emph{boundary local time }of reflecting diffusion processes, see, e.g., \cite{Hsu,Papanicolaou,Pardoux}, which will be crucial for the subsequent derivation of the Feynman-Kac formulae. If the diffusion process is the solution to a stochastic differential equation, say the reflecting Brownian motion, then the boundary local time is given by the one-dimensional process $L$ in the Skorohod decomposition, which prevents the sample paths from leaving $\overline{D}$, i.e.,
\begin{equation}\label{eqn:SkorohodRBM}
\X_t=x+W_t-\frac{1}{2}\int_0^t\nu(\X_s)\dd L_s, 
\end{equation} 
$\mathbb{P}_x$-a.s. for q.e. $x\in\overline{D}$. This boundary local time is a continuous non-decreasing process which increases only when $X_t\in\partial D$, namely for all $t\geq 0$ and q.e. $x\in\overline{D}$
\begin{equation*}
L_t=\int_0^t[\partial D](\X_s)\dd L_s,
\end{equation*}
$\mathbb{P}_x$-a.s. and
\begin{equation*}
\EW_x\int_0^t[\partial D](\X_s)\dd s=0. 
\end{equation*}
Although the reflecting diffusion process associated with 
(\ref{eqn:Dirichlet}) does in general not admit a Skorohod decomposition of the form (\ref{eqn:SkorohodRBM}), we may still define a continuous one-dimensional process with these properties. More precisely, by the Lipschitz property of $\partial D$, we have that $D\cap B(x,r_D)=\{(\tilde{x},x_d):x_d>\gamma(\tilde{x})\}\cap B(x,r_D)$ and the Lipschitz function $\gamma$ is differentiable a.e. with a bounded gradient. In particular, we have for every Borel set $B\subset \partial D\cap B(x,r_D)$ that
\begin{equation*}
\sigma(B)=\int_{\{\tilde{x}:(\tilde{x},\gamma(\tilde{x}))\in B\}}\Big(1+\lvert \nabla \gamma(\tilde{x})\rvert^2\Big)^{1/2}\dd\tilde{x}
\end{equation*}
and a straightforward computation yields that the Lebesgue surface measure $\sigma$ is a smooth measure with respect to $(\E,\D(\E))$ having finite energy, i.e., 
\begin{equation*}
\int_{\partial D}\lvert v\rvert \dd\sigma(x)\leq c\lvert\lvert v\rvert\rvert_{\E_1}\quad \text{for all }v\in \D(\E)\cap C(\overline{D}),
\end{equation*}
where we have used the inner product $\E_1(\cdot,\cdot):=\E(\cdot,\cdot)+\left<\cdot,\cdot\right>$.
\begin{definition}
The positive continuous additive functional of $X$ whose Revuz measure is given by the Lebesgue surface measure $\sigma$ on $\partial D$, i.e., the unique $L \in\mathcal{A}_c^+$ such that
\begin{equation}\label{eqn:Revuz}
\lim_{t\rightarrow 0+}\frac{1}{t}\int_D\EW_x\Big\{\int_0^t\phi(\X_s)\dd L_s\Big\}\psi(x)\dd x=
\int_{\partial D}\phi(x)\psi(x)\dd\sigma(x)
\end{equation}
for all non-negative Borel functions $\phi$ and all $\alpha$-excessive functions $\psi$, is called the \emph{boundary local time} of the reflecting diffusion process $\X$.
\end{definition}

The rest of this section is devoted to showing that the $\E$-exceptional set $\mathcal{N}$ is actually empty. Therefore, we consider 
the non-positive definite self-adjoint operator
$(\mathcal{L},\D(\mathcal{L}))$ associated with the Dirichlet form
$(\E,\D(\E))$. That is, for $v\in\D(\mathcal{L})$ we
have 
\begin{equation}\left<-\mathcal{L}v,w\right>=\E(v,w)\quad\text{ for all }w\in\D(\E)\end{equation} 
and 
the domain of $\mathcal{L}$ is given by
\begin{equation*}
\D(\mathcal{L})=\Big\{v\in\D(\E):\exists \phi\in L^2(D)\text{ s.t. }
\E(v,w)=\int_D\phi w\dd  x\ \forall w\in \D(\E)\Big\}.
\end{equation*}
In order to \emph{refine} the reflecting diffusion process $\X$ to start from every
$x\in\overline{D}$, we exploit the connection between the strongly continuous sub-Markovian contraction semigroup
$\{T_t,t\geq 0\}$ on $L^2(D)$ and the evolution system corresponding to
$(\mathcal{L},\mathcal{D}(\mathcal{L}))$, see, e.g., the monograph
\cite{Pazy}. Namely, for every $v_0\in L^2(D)$, the trajectory
$v:(0,T)\rightarrow H^1(D)$, $v(t)=T_tv_0$ belongs to the function space
$$\{\phi\in L^2((0,T);H^1(D)):\dot{\phi}\in
L^2((0,T);H^{-1}(D))\}$$ and is the unique mild solution to the parabolic \emph{abstract
Cauchy problem}
\begin{equation}\begin{aligned}\label{parabolic}
&\dot{v} + \mathcal{L} v=0 &\ &\text{in }(0,T)\\
&v(0)=v_0.&\ &
\end{aligned}\end{equation}
This is equivalent to the variational formulation 
\begin{equation}\label{weak}
-\int_0^T\left<v(t),w\right>\dot{\varphi}(t)\dd 
t+\int_0^T\left<\mathcal{L}v(t),w\right>\varphi(t)\dd 
t-\left<v_0,w\right>\varphi(0)=0
\end{equation}
for all $w\in H^1(D)$ and all $\varphi\in C^{\infty}_c([0,T))$.
Moreover, $T_t$ is known to be a bounded operator from $L^1(D)$
to $L^{\infty}(D)$ for every $t>0$. Therefore, by the Dunford-Pettis
theorem, it can be represented as an integral operator for every $t>0$,
\begin{equation}\label{eqn:kernel}
T_t\phi(x)=\int_D p(t,x,y)\phi(y)\dd  y\quad\text{ for
every } \phi\in L^1(D),
\end{equation}
where for all $t> 0$ we have $p(t,\cdot,\cdot)\in L^{\infty}(D\times D)$ and
$p(t,\cdot,\cdot)\geq 0$ a.e.
We call the function $p$ the \emph{transition kernel density}Êof $\X$.

The following proposition adapts a well-known result for diffusion processes on $\R^d$, cf. \cite{Stroock0}, which follows from the famous De Giorgi-Nash-Moser theorem, to the case of reflecting diffusion processes on $\overline{D}$. The key idea of the proof is the following \emph{extension by reflection} technique from \cite[Section 2.4.3]{Troianiello}:  We extend the solution to a parabolic problem by reflection at the boundary. Then we show that this extension again solves a parabolic problem so that we can apply the interior regularity result due to De Giorgi, Nash and Moser. See also the article \cite{Nittka} by Nittka, where such a technique is applied to elliptic boundary value problems.  

\begin{proposition}\label{thm:1}
$p\in C^{0,\delta}((0,T]\times\overline{D}\times\overline{D})$ for some $\delta\in(0,1)$, i.e., 
for each fixed $0<t_0\leq T$, there exists a positive constant $c$ such that 
\begin{equation}\label{eqn:Hold}
\lvert p(t_2,x_2,y_2)-p(t_1,x_1,y_1)\rvert\leq
c(\sqrt{t_2-t_1}+\lvert x_2-x_1\rvert+\lvert y_2-y_1\rvert)^{\delta}
\end{equation}
for all $t_0\leq t_1\leq t_2\leq T$ and all
  $(x_1,y_1),(x_2,y_2)\in\overline{D}\times\overline{D}$.
Moreover, the mapping $t\mapsto p(t,\cdot,\cdot)$ is analytic from $(0,\infty)$ to $C^{0,\delta}(\overline{D}\times\overline{D})$.  
\end{proposition}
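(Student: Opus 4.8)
The plan is to obtain the interior regularity from the classical parabolic De Giorgi–Nash–Moser estimate and to reduce the regularity up to the boundary $\partial D$ to an interior estimate by the extension-by-reflection technique already advertised above. Fix $y\in\overline D$ and observe that, by (\ref{eqn:kernel}) and the characterisation (\ref{weak}), for each such $y$ the function $(t,x)\mapsto p(t,x,y)$ is a nonnegative weak solution of the parabolic problem $\dot v+\mathcal{L}v=0$ on each cylinder $(t_0/2,T]\times D$, where the membership $\D(\E)=H^1(D)$ encodes the co-normal (reflecting) boundary condition in the weak formulation, i.e.\ there is no boundary integral. Away from $\partial D$ the interior parabolic theorem (as in \cite{Stroock0}) already yields a local joint Hölder estimate of the form (\ref{eqn:Hold}), so the crux of the Hölder claim is the behaviour near $\partial D$.

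To treat a boundary point $x_0\in\partial D$, I would first localise in the ball $B(x_0,r_D)$ and flatten the boundary by the bi-Lipschitz change of variables $\Phi$ furnished by the Lipschitz graph $\gamma$, which maps $D\cap B(x_0,r_D)$ onto a half-ball and $\partial D\cap B(x_0,r_D)$ onto its flat face. Pushing the equation forward through $\Phi$ produces a new divergence-form parabolic operator whose coefficient matrix $\tilde\kappa=|\det D\Phi|^{-1}\,D\Phi\,\kappa\,D\Phi^{\mathsf T}$ is again symmetric, bounded and uniformly elliptic, since $D\Phi$ and its inverse are bounded a.e.\ in terms of $c_D$; crucially, no smoothness of $\tilde\kappa$ is gained or needed. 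I would then extend the transformed solution $\tilde u$ evenly across the flat face, $\bar u(\tilde x,x_d):=\tilde u(\tilde x,-x_d)$ for $x_d<0$, and extend the coefficients by the corresponding even/odd reflection (flipping the sign of the mixed $x_d$-components). The decisive point is that $\bar u$ is a weak solution of the reflected equation on the \emph{full} ball: testing against $\varphi\in C_c^\infty$ and splitting the integral over the two half-balls, the two conormal contributions on the flat face cancel precisely because the original weak formulation over $H^1$ carries no flux term, so no singular source is created along the interface. Applying the interior estimate to $\bar u$ and transporting the Hölder bound back through $\Phi^{-1}$ (which preserves the exponent, being bi-Lipschitz) gives the local estimate up to $\partial D$.

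I expect the verification that $\bar u$ is genuinely a weak solution — with no distributional measure on the interface — to be the main obstacle, and the only place where the Neumann nature of the problem is essential. Two technical points must be checked with care: that the even extension of an $H^1$ function across the flat face is again $H^1$, and that the reflected coefficient matrix stays uniformly elliptic despite being discontinuous across the interface; the latter is harmless for De Giorgi–Nash–Moser, which needs only measurability and ellipticity. Once the local boundary estimate is in hand, a covering of $\overline D$ by finitely many such charts together with the interior estimate yields a global joint Hölder bound in $(t,x)$ for $p(\cdot,\cdot,y)$, uniformly in $y$; here the constant is controlled by $\sup_{x,y}p(t,x,y)$, which is finite for $t\ge t_0$ because $T_t$ maps $L^1(D)$ into $L^\infty(D)$. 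Symmetry $p(t,x,y)=p(t,y,x)$ transfers the estimate to the $y$-variable, and combining the two through the Chapman–Kolmogorov identity $p(t,x,y)=\int_D p(t/2,x,z)\,p(t/2,z,y)\dd z$ produces the joint estimate (\ref{eqn:Hold}).

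Finally, for the analyticity in $t$ I would pass to the spectral representation. Since $(\mathcal{L},\D(\mathcal{L}))$ is self-adjoint and non-positive with compact resolvent (the embedding $H^1(D)\hookrightarrow L^2(D)$ being compact on the bounded Lipschitz domain $D$), it has discrete spectrum $0=\lambda_0\le\lambda_1\le\cdots\to\infty$ with an $L^2$-orthonormal eigenbasis $\{\varphi_k\}$, and $p(t,x,y)=\sum_k e^{-\lambda_k t}\varphi_k(x)\varphi_k(y)$. Each eigenfunction satisfies $\varphi_k=e^{\lambda_k s}T_s\varphi_k$, so, choosing $s\sim\lambda_k^{-1}$ and using the smoothing implicit in (\ref{eqn:Hold}), the norm $\|\varphi_k\|_{C^{0,\delta}(\overline D)}$ is bounded by a polynomial in $\lambda_k$; against the factor $e^{-\lambda_k t}$ with $t\ge t_0$, the series and all its termwise $t$-derivatives converge in $C^{0,\delta}(\overline D\times\overline D)$, which exhibits $t\mapsto p(t,\cdot,\cdot)$ as an analytic $C^{0,\delta}$-valued map on $(0,\infty)$. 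Equivalently, one may invoke that a self-adjoint semigroup is analytic on $L^2$ and upgrade to the Hölder space using the smoothing bound $T_s\colon L^2(D)\to C^{0,\delta}(\overline D)$, itself a consequence of (\ref{eqn:Hold}).
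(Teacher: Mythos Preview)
Your argument for the H\"older estimate follows essentially the same route as the paper: interior De Giorgi--Nash--Moser, bi-Lipschitz flattening near a boundary point, even reflection across the flat face, verification that the reflected function solves a divergence-form parabolic equation on the full ball (with the conormal terms cancelling because the weak formulation carries no boundary flux), transfer of the interior estimate back through the bi-Lipschitz chart, and finally symmetry plus Chapman--Kolmogorov to pass from regularity in $x$ to joint regularity in $(x,y)$. The only cosmetic difference is that the paper first derives Nash's inequality and an Aronson-type Gaussian upper bound to control $\sup p$, whereas you appeal directly to the $L^1\to L^\infty$ smoothing already recorded before the proposition; both yield the needed uniform bound on $[t_0,T]$.

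For the analyticity in $t$ you take a genuinely different path. The paper argues abstractly: the semigroup extends holomorphically to a sector on $L^2(D)$, the family $\{p(z,\cdot,\cdot)\}$ is bounded in $C^{0,\delta}(\overline D\times\overline D)$ on compact subsectors by the H\"older estimate just proved, and then a weak-to-strong analyticity criterion of Arendt--Nikolski (applied with the separating family of functionals given by integration against $[B_1](x)[B_2](y)$) upgrades weak holomorphy to $C^{0,\delta}$-valued holomorphy. Your approach is instead the concrete spectral one: exploit compactness of the resolvent to write $p(t,x,y)=\sum_k e^{-\lambda_k t}\varphi_k(x)\varphi_k(y)$, bound $\lVert\varphi_k\rVert_{C^{0,\delta}}$ polynomially in $\lambda_k$ via $\varphi_k=e^{\lambda_k s}T_s\varphi_k$ and the $L^2\to C^{0,\delta}$ smoothing (with its $s$-dependent norm), and conclude that the series converges in $C^{0,\delta}(\overline D\times\overline D)$ together with all $t$-derivatives. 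Your route is more elementary and self-contained but hinges on discreteness of the spectrum; the paper's route avoids eigenfunctions altogether and would carry over verbatim to settings without compact resolvent. Both are correct here.
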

\begin{proof}
First note that Nash's inequality holds for the underlying
Dirichlet form $(\E,H^1(D))$, i.e., there exists a constant $c_1>0$ such
that 
\begin{equation*}
\lvert\lvert v\rvert\rvert_2^{2+4/d} \leq c_1 (\E(v,v)+\lvert\lvert
v\rvert\rvert_2^2)\lvert\lvert v\rvert\rvert_{1}^{4/d}\quad\text{for
all }v\in H^1(D).
\end{equation*}
This is a direct consequence of the uniform ellipticity
(\ref{eqn:ellipticity}) and \cite[Corollary 2.2]{Bass}, where Nash's
inequality is shown to hold for the Dirichlet form $(\E^{\text{BM}},H^1(D))$ 
for a bounded Lipschitz domain $D$.
Analogously to the proof of \cite[Theorem 3.1]{Bass}, it follows thus
from \cite[Theorem 3.25]{Carlen} that the
transition kernel density satisfies an Aronson type Gaussian upper bound
\begin{equation}\label{eqn:Gaussian}
p(t,x,y)\leq c_1
t^{-d/2}\exp\Big(-\frac{\lvert x-y\rvert^2}{c_2t}\Big)
\end{equation}
for all $t\leq 1$ and all $(x,y)\in\overline{D}\times\overline{D}$. In particular, $\sup_{0< t\leq 1}\lvert\lvert
p(t,\cdot,\cdot)\rvert\rvert_{\infty}$ is finite and hence by the interior H\"older
continuity obtained from the De Giorgi-Nash-Moser theorem, cf.~\cite{Nash, Stroock0}, the estimate (\ref{eqn:Hold}) is true for
all $(x_1,y_1)$, $(x_2,y_2)$ satisfying $d(x_i,\partial
D),d(y_i,\partial D)>c_3$, $i=1,2$, for some constant $c_3>0$ and all $t_0\leq t_1\leq
t_2\leq 1$. Note that by the semigroup property the
Chapman-Kolmogorov equation holds, i.e.,
\begin{equation}\label{eqn:Chapman}
p(t_1+t_2,x,y)=\int_D p(t_1,x,z)\,p(t_2,z,y)\dd  z
\end{equation}
for every pair $t_1,t_2\geq 0$ and a.e. $x,y\in\overline{D}$. In particular, for fixed
$y\in\overline{D}$ the function $v:=p(\cdot,\cdot,y)$ is the unique
solution to (\ref{parabolic}) with initial value $v_0:=p(0,\cdot,y)\in
L^2(D)$. 
Now let $z\in\partial D$ so that by the Lipschitz property of $\partial D$ we
have after translation and rotation
$B(z,r_D)\cap\overline{D}=\{(\tilde{x},x_d)\in
B(z,r_D):x_d\geq \gamma(\tilde{x})\}$ and $B(z,r_D)\cap\partial
D=\{\tilde{x}\in B(z,r_D):x_d=\gamma(\tilde{x})\}$, where we have
introduced the notation $\tilde{x}=(x_1,...,x_{d-1})^T$. 
Let us furthermore introduce the one-to-one transformation
$\Psi(x):=(\tilde{x},x_d-\gamma(\tilde{x}))$ which straightens the boundary $B(z,r_D)\cap\partial
D$. $\Psi$ is a bi-Lipschitz transformation and the Jacobians of both
$\Psi$ and $\Psi^{-1}$ are bounded with bounds that depend only on the
Lipschitz constant $c_D$.
Since $v$ is the solution to (\ref{parabolic}) with appropriate initial
condition, the function $\hat{v}:=v(\cdot,\Psi^{-1}(\cdot))$ must
satisfy the variational formulation of the following parabolic problem in
$\hat{D}(z,r_D):=\Psi(B(z,r_D)\cap\overline{D})$
\begin{eqnarray*}
\int_0^T\dot{\varphi}(t)\int_{\hat{D}(z,r_D)}\hat{v}(t)w\dd  x\dd 
t&=&-\sum\limits_{i,j=1}^d\int_0^T\varphi(t)\int_{\hat{D}(z,r_D)}
\hat{\kappa}_{ij}\partial_i
\hat{v}(t)\partial_j w\dd x\dd  t\\
&&-\varphi(0)\int_{\hat{D}(z,r_D)}\hat{v}_0w\dd x
\end{eqnarray*}
for all $w\in C^{\infty}_c(\hat{D}(z,r_D))$ and all $\varphi\in
C^{\infty}_c([0,T))$. The coefficient $\hat{\kappa}$ is obtained via
change of variables and it is bounded and uniformly elliptic by the
boundedness of the Jacobians of $\Psi$ and $\Psi^{-1}$, respectively.
Now we use reflection at the hyperplane $\{(\tilde{y},0)\}$ via the
mapping $\rho(x):=(\tilde{x},-x_d)$ which yields that the function
$\hat{v}(\cdot,\rho(\cdot))$ satisfies the variational formulation of a parabolic problem on
$\rho(\hat{D}(z,r_D))$. Summing up both variational formulations on $
\hat{D}(z,r_D)$ and on $\rho( \hat{D}(z,r_D))$, respectively, we obtain
that the function 
\begin{equation*}
\check{v}(t,x):=\begin{cases}
\hat{v}(t,x),\quad&x\in \hat{D}(z,r_D)\\
\hat{v}(t,\rho(x)),\quad&x\in \rho(\hat{D}(z,r_D))
\end{cases}
\end{equation*}
satisfies the variational formulation of a parabolic problem in $\hat{D}(z,r_D)\cup\rho(\hat{D}(z,r_D)$.
By the interior H\"older estimate
for $\check{v}$, together with the fact that we may choose $c_3=r_D/4c_D$, we obtain thus
\begin{equation*}
\lvert p(t_2,x_2,\Psi^{-1}(y_2))-p(t_1,x_1,\Psi^{-1}(y_1))\rvert\leq
c_1(\sqrt{t_2-t_1}+\lvert y_2-y_1\rvert)^{c_2}
\end{equation*} 
for all $t_0\leq t_1\leq t_2\leq 1$ and $y_1,y_2\in
\{(\tilde{x},x_d):\lvert\tilde{x}\rvert<c_3,\ x_d\in
(0,r_D/4)\}$. As $\Psi$ is bi-Lipschitz, for fixed $x$, the mapping
$(t,y)\mapsto p(t,x,y)$ is H\"older continuous in
$(t_0,1]\times(B(z,c_3)\cap\overline{D})$ and by symmetry of the
transition kernel density the same holds true for the mapping
$(t,x)\mapsto p(t,x,y)$ for fixed $y$.  
Finally, the first assertion on
$(t_0,1]\times\overline{D}\times\overline{D}$ follows due to compactness of $\partial D$ 
and its generalization to arbitrary $T>0$ is obtained after
repeatedly applying the Chapman-Kolmogorov equation.  

The second assertion follows by the fact that the semigroup $\{T_t,t\geq 0\}$ extrapolates to a holomorphic semigroup on $L^2(D)$. 
More precisely, the semigroup possesses a holomorphic extension to the sector $\Sigma_{\theta}:=\{re^{i\alpha}:r>0,\lvert\alpha\rvert<\theta\}$ for some $\theta\in (0,\frac{\pi}{2}]$, cf., e.g., \cite{Pazy}. Let $0<t_0\leq T$ and set $$\Sigma_{\theta}(t_0,T):=\{z\in \mathbb{C}:z-t_0\in\Sigma_{\theta}, \lvert z\rvert<T\}.$$ By the H\"older continuity of $p$, the set $\{p(z,\cdot,\cdot):z\in\Sigma_{\theta}(t_0,T)\}$ is a bounded subset of $C^{0,\delta}(\overline{D}\times\overline{D})$. Moreover, the family of functionals obtained form integration against the functions $[B_1](x)[B_2](y)$ for measurable $B_1, B_2\subset \overline{D}$ form a separating subspace of $(C^{0,\delta}(\overline{D},\overline{D}))'$, i.e., for $k\in C^{0,\delta}(\overline{D}\times\overline{D})$ 
\begin{equation*}
\int_{D\times D}k(x,y)[B_1](x)[B_2](y)\dd x\dd y=0\quad\text{for all measurable }B_1, B_2\subset \overline{D}
\end{equation*}
implies that $k\equiv 0$. As the mapping 
$$z\mapsto\left< T_z[B_1],[B_2]\right>=\int_{D\times D}p(z,x,y)[B_1](y)[B_2](x)\dd x\dd y$$ is holomorphic for all $z\in\Sigma_{\theta}$, the mapping $z\mapsto p(z,\cdot,\cdot)$ is holomorphic from $\Sigma_{\theta}(t_0,T)$ to $C^{0,\delta}(\overline{D}\times\overline{D})$ by \cite[Theorem 3.1]{Arendt}. Since $t_0$ and $T$ were arbitrary, the assertion is proved.
\end{proof} 
By \cite[Theorem 2]{Fukushima2}, the existence of a H\"older continuous
transition kernel density ensures that we may refine the process $\X$ to
start from every $x\in\overline{D}$ by identifying the strongly
continuous semigroup $\{T_t,t\geq 0\}$ with the transition semigroup
$\{P_t,t\geq 0\}$. In particular, if $v$ is continuous and locally in $H^1(D)$, the Fukushima decomposition holds for every $x\in\overline{D}$, i.e., 
\begin{equation}\label{eqn:Fukudecomp}
v(\X_t)=v(\X_0)+M_t^{v}+N_t^{v},\quad\text{for all }
t> 0,
\end{equation} 
$\mathbb{P}_{x}$-a.s.,
where $M^{v}$ is a martingale additive functional of $\X$
having finite energy and $N^{v}$ is a continuous additive functional of $\X$ having
zero energy. 

Moreover, both $M^{v}$ and $N^{v}$ can be taken to be additive functionals of $\X$ in the strict
sense, cf. \cite[Theorem 5.2.5]{Fukushimaetal}. 

Finally, note that the $1$-potential of the Lebesgue surface measure $\sigma$ of $\partial D$ is the solution to an elliptic boundary value problem on a Lipschitz domain with bounded data. By elliptic regularity theory, cf., e.g.,  \cite{Griepentrog}, this solution is continuous, implying that the boundary local time $L$ exists as a positive continuous additive functional in the strict sense, cf.~\cite[Theorem 5.1.6]{Fukushimaetal}.

\section{Skorohod decompositions}\label{sectionSk}
In this section, we derive Skorohod decompositions of the reflecting diffusion process $\X$ for two practically relevant special cases, namely local Lipschitz conductivities and isotropic piecewise constant conductivities. 

The assertion of the following proposition is already covered by \cite[Theorem 2.3]{FukushimaTomisaki}; we include a proof for the sake of self-containedness.
\begin{proposition}\label{thm:Sk}
Let $\kappa\in C_{\text{loc}}^{0,1}(\overline{D};\R^{d\times d})$ be a symmetric, uniformly bounded and uniformly elliptic conductivity. Then the reflecting diffusion process $\X$
admits the following Skorohod decomposition
\begin{equation}\label{eqn:Skor1}
X_t = x+\int_0^t B(X_s)\dd W_s+\int_0^t\nabla\kappa(X_s)\dd s-\int_0^t\kappa(X_s)\nu(X_s)\dd L_s,
\end{equation}
$\mathbb{P}_x$-a.s., where $B:\overline{D}\rightarrow\R^{d\times d}$ denotes the positive definite \textit{diffusion matrix }satisfying $B^2=2\kappa$, $W$ is a standard $d$-dimensional Brownian motion and $L$ is the boundary local time of $\X$.
\end{proposition}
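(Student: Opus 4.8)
The plan is to derive \eqref{eqn:Skor1} coordinatewise by applying the Fukushima decomposition \eqref{eqn:Fukudecomp} to each projection $v=x_i$, $i=1,\dots,d$, and then identifying the martingale and zero-energy parts explicitly. Since $D$ is bounded, every coordinate function lies in $H^1(D)=\D(\E)$ and is continuous on $\overline{D}$; hence, by Proposition~\ref{thm:1} and the subsequent refinement, the decomposition
\[
\X_t^i=x_i+M_t^{x_i}+N_t^{x_i}
\]
holds $\mathbb{P}_x$-a.s. for every $x\in\overline{D}$, where $M^{x_i}$ is a martingale additive functional of finite energy and $N^{x_i}$ a continuous additive functional of zero energy, both in the strict sense. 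It then suffices to match $\sum_i M^{x_i}e_i$ with the stochastic integral and $\sum_i N^{x_i}e_i$ with the drift-plus-local-time term in \eqref{eqn:Skor1}.

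First I would pin down the martingale parts. A direct computation of the energy measure of the form \eqref{eqn:Dirichlet} gives $\dd\mu_{\langle x_i,x_j\rangle}=2\kappa_{ij}\,\dd m$, so that
\[
\langle M^{x_i},M^{x_j}\rangle_t=\int_0^t 2\kappa_{ij}(\X_s)\,\dd s=\int_0^t (B^2)_{ij}(\X_s)\,\dd s ,
\]
using $B^2=2\kappa$. Because $\kappa$ is uniformly elliptic and continuous, $B$ is continuous with bounded inverse, and I would set $W_t:=\int_0^t B^{-1}(\X_s)\,\dd M_s$ for $M=(M^{x_1},\dots,M^{x_d})$. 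A short computation gives $\langle W^i,W^j\rangle_t=t\,\delta_{ij}$, so by L\'evy's characterization $W$ is a standard $d$-dimensional Brownian motion and $M_t=\int_0^t B(\X_s)\,\dd W_s$, as required.

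Next I would identify the zero-energy parts. Exploiting the local Lipschitz regularity of $\kappa$, the Gauss--Green formula on the bounded Lipschitz domain $D$ yields, for all $w\in\D(\E)\cap C(\overline{D})$,
\[
\E(x_i,w)=\int_D(\kappa e_i)\cdot\nabla w\,\dd x=-\int_D(\nabla\kappa)_i\,w\,\dd x+\int_{\partial D}(\kappa\nu)_i\,w\,\dd\sigma ,
\]
with $(\nabla\kappa)_i:=\sum_j\partial_j\kappa_{ij}$ and $(\kappa\nu)_i:=\sum_j\kappa_{ij}\nu_j$. Thus $\E(x_i,w)=-\langle\mu_i,w\rangle$ for the signed measure $\mu_i:=(\nabla\kappa)_i\,\dd m-(\kappa\nu)_i\,\dd\sigma$, which is smooth and of finite energy since its density is bounded and $\sigma$ was already shown to have finite energy. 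By the Revuz correspondence, together with the characterization of the zero-energy part of a function whose image under the form is a signed smooth measure (the measure-valued extension of $N^u_t=\int_0^t\mathcal{L}u(\X_s)\dd s$), $N^{x_i}$ is the continuous additive functional with Revuz measure $\mu_i$. Since the absolutely continuous part corresponds to $\int_0^\cdot(\nabla\kappa)_i(\X_s)\,\dd s$ and, by the defining Revuz property \eqref{eqn:Revuz} of the boundary local time, the surface part corresponds to $-\int_0^\cdot(\kappa\nu)_i(\X_s)\,\dd L_s$, this gives
\[
N_t^{x_i}=\int_0^t(\nabla\kappa)_i(\X_s)\,\dd s-\int_0^t(\kappa\nu)_i(\X_s)\,\dd L_s ,
\]
and assembling the $d$ coordinate equations yields \eqref{eqn:Skor1}.

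I expect the main obstacle to be the rigorous last step: a priori $N^{x_i}$ is only known to be a continuous additive functional of \emph{zero energy}, and one must show it is of bounded variation and equals the additive functional $A^{\mu_i}$ of its associated measure. This rests on the fact that $x_i$ lies in the domain of the generator in the extended, measure-valued sense $\E(x_i,\cdot)=-\langle\mu_i,\cdot\rangle$, and on invoking the appropriate Dirichlet-form theorem matching such functions with $A^{\mu_i}$ via the Revuz correspondence. A secondary technical point is justifying the boundary integration by parts for test functions merely in $H^1(D)\cap C(\overline{D})$ on a Lipschitz domain and passing from this dense class to the full identity.
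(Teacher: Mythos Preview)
Your proposal is correct and follows essentially the same route as the paper: apply the Fukushima decomposition to the coordinate functions, identify the martingale part through its quadratic covariation $2\int_0^t\kappa_{ij}(\X_s)\dd s$ as a stochastic integral against a Brownian motion, and identify the zero-energy part via Green's formula and the Revuz correspondence with the measures $(\nabla\kappa)_i\,\dd m$ and $(\kappa\nu)_i\,\dd\sigma$. The only cosmetic difference is that you build $W$ explicitly as $\int_0^\cdot B^{-1}(\X_s)\,\dd M_s$ and invoke L\'evy's characterization, whereas the paper appeals directly to a martingale representation theorem; your concern about matching $N^{x_i}$ with $A^{\mu_i}$ is handled in the paper exactly as you suggest, by citing the standard Dirichlet-form result (cf.\ \cite[Theorem~5.4.2]{Fukushimaetal}) without further comment.
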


\begin{proof}
We have shown in Section \ref{section2}, that the Fukushima decomposition holds with a unique martingale additive functional $M^v$ in the strict sense and a unique continuous additive functional $N^v$ in the strict sense. Let us first compute the energy measure of $M^v$.
For $v,w\in\D(\E)$ we obtain 
\begin{eqnarray*}
\int_Dw(x)\dd\mu_{\left<M^{v}\right>}(x)&=&\lim_{t\rightarrow 0+}\frac{1}{t}\int_{D}\EW_{x}\left\{(v(X_t)-v(x_0))^2\right\}w(x)\dd x\\
&=&\lim_{t\rightarrow 0+}\frac{1}{t}\int_D(T_t v^2(x)-2v(x)T_tv(x)+v^2(x))w(x)\dd x\\
&=&\lim_{t\rightarrow 0+}\frac{2}{t}\int_{D}v(x)w(x)(v(x)-T_tv(x))\dd x\\
&& -\lim_{t\rightarrow 0+}\frac{1}{t}\int_Dv^2(x)(w(x)-T_tw(x))\dd x\\
&=& 2\E(vw,v)-\E(v^2,w)\\
&=&2\int_D\kappa\nabla v(x)\cdot\nabla v(x)\, w(x)\dd x,
\end{eqnarray*}
which yields the energy measure $$\dd \mu_{\left<M^{v}\right>}(x)=2\sum_{i,j=1}^d\kappa_{ij}(x)\partial_i v(x)\partial_jv(x)\dd x$$
so that the predictable quadratic variation of $M^v$ is given by
\begin{equation}\label{eqn:predqa}
\big<M^v\big>_t=2\int_0^t\sum_{i,j=1}^d\kappa_{ij}(\X_s)\partial_i v(\X_s)\partial_j v(\X_s)\dd s.
\end{equation}
Using the coordinate mappings $\phi_i(x):=x_i$, $i=1,...,d$, on $\overline{D}$
yields that $M^{\phi}$ is a continuous martingale additive functional in the strict sense with covariation
$$\big<M^{\phi_i},M^{\phi_j}\big>_t=2\int_0^t\kappa_{ij}(\X_s) \dd s,$$
$\mathbb{P}_x$-a.s. A standard characterization of continuous martingales, cf., e.g., \cite{Ikeda}, yields that  
\begin{equation}\label{eqn:diffusion}
M^{v}_t=\int_0^t (B(X_s)\nabla v(\X_s))\cdot\dd W_s,
\end{equation}
$\mathbb{P}_x$-a.s., where $B:\overline{D}\rightarrow\R^{d\times d}$ denotes the positive definite diffusion matrix satisfying $B^2=2\kappa$ and $W$ is a standard $d$-dimensional Brownian motion. 

Now let us consider the continuous additive functional $N^v$. 
Again using the coordinate mappings on $\overline{D}$, we obtain from Green's formula that 
\begin{eqnarray*}
\E(\phi_i,w)&=&\sum_{j=1}^d\int_D\kappa_{ij}(x)\partial_j w(x)\dd x\\
&=&-\sum_{j=1}^d\int_D\partial_j\kappa_{ij}(x)w(x)\dd x+\sum_{j=1}^d\int_{\partial D}\kappa_{ij}(x)\nu_j(x)w(x)\dd\sigma(x)
\end{eqnarray*}
for all $v\in H^1(D)\cap C(\overline{D})$. That is, $N^{\phi_i}$ is associated with the signed Radon measure 
\begin{equation*}
-\sum_{j=1}^d\partial_j\kappa_{ij}\dd x+\sum_{j=1}^d\kappa_{ij}(x)\nu_j(x)\dd\sigma(x)
\end{equation*}
and by the fact that the unique positive continuous additive functionals in the strict sense having as Revuz measure the Lebesgue measure and the Lebesgue surface measure, respectively, are given by the constant additive functional $t$ and the boundary local time $L_t$, we have shown that for every $x\in\overline{D}$
\begin{equation}\label{eqn:caf}
N_t^{\phi_i}=\sum_{j=1}^d\int_0^t\partial_j\kappa_{ij}(\X_s)\dd s-\sum_{j=1}^d\int_0^t\kappa_{ij}(\X_s)\nu_j(\X_s)\dd L_s,
\end{equation}
$\mathbb{P}_{x}$-a.s. Substitution of (\ref{eqn:diffusion}) and (\ref{eqn:caf}) in the Fukushima decomposition for the coordinate mappings finally yields the Skorohod decomposition (\ref{eqn:Skor1}).
\end{proof}

Now let us turn to the case of isotropic piecewise constant conductivities and for simplicity of the presentation let us consider a simplistic two-phase medium, where 
\begin{equation}\label{eqn:simplistic}
\kappa(x)=\begin{cases}
\kappa_1,\quad x\in D_1\\
\kappa_2,\quad x\in D_2,
\end{cases}
\end{equation}
with constants $\kappa_1,\kappa_2>0$ and $D$ is a simply connected bounded Lipschitz domain which consists of two disjoint subdomains such that $D_1=D\backslash\overline{D}_2$. We assume that $D_2$ is a simply connected Lipschitz domain. $\nu$ is the outer unit normal vector on $\partial D$ and the outer unit normal vector on $\partial D_2$ with respect to $D_2$.
\begin{definition}
The positive continuous additive functional $L^{0}$ of $X$ whose Revuz measure is given by the scaled Lebesgue surface measure $(\kappa_1+\kappa_2)\sigma$ on $\partial D_2$ is called the \emph{symmetric local time} of the reflecting diffusion process $\X$ at $\partial D_2$. 
\end{definition}
\begin{remark}
The term \lq\lq symmetric\rq\rq\ comes from the fact that in the one-dimensional case $L^{0}$ is the local time defined by the Tanaka formula with the convention $\operatorname{sign}(0)=0$, which is called the symmetric local time, see \cite{Revuz}. In this case we have
\begin{equation*}
L_t^0=\lim_{\ep\rightarrow 0}\frac{1}{2\ep}\int_0^t[[-\ep,\ep]](X_s)\dd s.
\end{equation*}
\end{remark}

\begin{proposition}
Let $\kappa$ be given by (\ref{eqn:simplistic}). Then the reflecting diffusion process $\X$
admits the following Skorohod decomposition
\begin{equation}\label{eqn:Skor1}
\X_t = x+\int_0^t\sqrt{2\kappa(\X_s)}\dd W_s+\frac{\kappa_1-\kappa_2}{\kappa_1+\kappa_2}\int_0^t\nu(\X_s)\dd L^{0}_s-\kappa_1\int_0^t\nu(\X_s)\dd L_s,
\end{equation}
 $\mathbb{P}_x$-a.s., where $W$ is a standard $d$-dimensional Brownian motion, $L^0$ is the symmetric local time of $X$ at $\partial D_2$ and $L$ is the boundary local time.
\end{proposition}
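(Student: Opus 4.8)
The plan is to follow the proof of Proposition \ref{thm:Sk}, applying the Fukushima decomposition (\ref{eqn:Fukudecomp}) to the coordinate mappings $\phi_i(x):=x_i$ and identifying separately the strict martingale additive functional $M^{\phi_i}$ and the strict zero-energy part $N^{\phi_i}$. Because $\kappa$ is now \emph{isotropic}, i.e.\ $\kappa_{ij}=\kappa\,\delta_{ij}$, the energy-measure computation performed there carries over verbatim and gives the covariation $\langle M^{\phi_i},M^{\phi_j}\rangle_t=2\,\delta_{ij}\int_0^t\kappa(\X_s)\dd s$. Hence the diffusion matrix is $B=\sqrt{2\kappa}\,I$, and the standard characterisation of continuous martingales used before yields $M^{\phi}_t=\int_0^t\sqrt{2\kappa(\X_s)}\dd W_s$ for a standard $d$-dimensional Brownian motion $W$. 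The entire novelty therefore lies in the zero-energy part $N^{\phi_i}$.

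For $N^{\phi_i}$ I would evaluate, for $w\in H^1(D)\cap C(\overline{D})$, the form $\E(\phi_i,w)=\int_D\kappa\,\partial_i w\dd x$ by integrating by parts \emph{separately} over $D_1$ and $D_2$. Since $\kappa$ is constant on each subdomain no interior term survives; only boundary contributions remain. On $\partial D$ one obtains $\kappa_1\nu_i$, whereas along the interface $\partial D_2$ the contribution from $D_1$—whose outward normal there equals $-\nu$—and the contribution from $D_2$ combine into $(\kappa_2-\kappa_1)\nu_i$, so that
\[
\E(\phi_i,w)=\kappa_1\int_{\partial D}w\,\nu_i\dd\sigma+(\kappa_2-\kappa_1)\int_{\partial D_2}w\,\nu_i\dd\sigma .
\]
Consequently $N^{\phi_i}$ is the strict continuous additive functional associated with this signed measure through the Revuz correspondence employed in the proof of Proposition \ref{thm:Sk}.

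It then remains to translate this functional into the two local times, picking up the overall sign of that correspondence as before. The boundary part reproduces, by the definition of the boundary local time $L$, the term $-\kappa_1\int_0^t\nu_i(\X_s)\dd L_s$. For the interface part I would write $(\kappa_2-\kappa_1)\nu_i\dd\sigma=\tfrac{\kappa_2-\kappa_1}{\kappa_1+\kappa_2}\,\nu_i\,(\kappa_1+\kappa_2)\dd\sigma$ on $\partial D_2$ and recognise $(\kappa_1+\kappa_2)\sigma|_{\partial D_2}$ as precisely the Revuz measure defining the symmetric local time $L^0$, which yields the term $\tfrac{\kappa_1-\kappa_2}{\kappa_1+\kappa_2}\int_0^t\nu_i(\X_s)\dd L^0_s$. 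Substituting the resulting expressions for $M^{\phi_i}$ and $N^{\phi_i}$ back into the Fukushima decomposition (\ref{eqn:Fukudecomp}) for the coordinate mappings then produces the claimed Skorohod decomposition (\ref{eqn:Skor1}).

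The step I expect to be most delicate is the bookkeeping of the interface contribution: orienting $\nu$ consistently on the two sides of $\partial D_2$, justifying the piecewise Green's formula across the jump of $\kappa$ for merely $H^1(D)\cap C(\overline{D})$ test functions, and matching the interface measure against the normalisation $(\kappa_1+\kappa_2)\sigma$ that defines $L^0$ so that the transmission constant $\tfrac{\kappa_1-\kappa_2}{\kappa_1+\kappa_2}$ emerges with the correct sign. One should moreover verify, exactly as was done for $L$, that $L^0$ indeed exists as a strict positive continuous additive functional; this follows from the continuity of the $1$-potential of $(\kappa_1+\kappa_2)\sigma|_{\partial D_2}$ by elliptic regularity on the Lipschitz domain $D_2$.
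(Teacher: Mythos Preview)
Your proposal is correct and follows essentially the same route as the paper: both arguments reuse the energy-measure computation from Proposition~\ref{thm:Sk} to identify the martingale part, apply Green's formula piecewise on $D_1$ and $D_2$ to obtain the signed Radon measure governing $N^{\phi_i}$, and then invoke the Revuz correspondence for the boundary and interface surface measures. Your explicit rewriting of $(\kappa_2-\kappa_1)\nu_i\,\mathrm{d}\sigma$ as $\tfrac{\kappa_2-\kappa_1}{\kappa_1+\kappa_2}\,\nu_i\,(\kappa_1+\kappa_2)\mathrm{d}\sigma$ to match the defining Revuz measure of $L^0$, and your remark on the strict-sense existence of $L^0$, are details the paper leaves implicit.
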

\begin{proof}
Repeating the computations from the proof of Proposition \ref{thm:Sk} yields for the martingale additive functional the predictable quadratic variation
 $$\big<M^{v}\big>_t=2\int_0^t\kappa(\X_s)\lvert\nabla v(X_s)\rvert^2 \dd s,\quad\mathbb{P}_x\text{-a.s. for every }x\in\overline{D},$$
implying that 
\begin{equation*}
M^{v}_t=\int_0^t \sqrt{2\kappa(\X_s)}\nabla v(\X_s))\cdot\dd W_s,\quad\mathbb{P}_x\text{-a.s. for every }x\in\overline{D},
\end{equation*}
where $W$ is a standard $d$-dimensional Brownian motion.
By Green's formula we have for all $w\in \D(\E)\cap C(\overline{D})$ 
\begin{eqnarray*}
\E(v,w)&=&\kappa_1\int_{D_1}\nabla v\cdot\nabla w\dd x+\kappa_2\int_{D_2}\nabla v\cdot\nabla w\dd x\\
&=&-\int_Dv\kappa\Delta v\dd x-(\kappa_1-\kappa_2)\int_{\partial D_2}\partial_{\nu}v w\dd\sigma(x)\\
&&+\kappa_1\int_{\partial D}\partial_{\nu}v w\dd\sigma(x).
\end{eqnarray*}
Using the coordinate mappings on $\overline{D}$ we obtain that $N^{\phi_i}$ is associated with the signed Radon measure 
\begin{equation*}
-(\kappa_1-\kappa_2)\nu(x)\dd\sigma\vert_{\partial D_2}(x)+\kappa_1\nu(x)\dd\sigma\vert_{\partial D}(x).
\end{equation*}
The assertion follows by the fact that the unique positive continuous additive functionals in the strict sense having as Revuz measure the (scaled) Lebesgue surface measure on $\partial D_2$ and $\partial D$, respectively, are given by the symmetric local time $L^0$ at $\partial D_2$ and the boundary local time $L_t$.
\end{proof}


\section{Feynman-Kac formulae}\label{section4}
In this section, we derive the Feynman-Kac formulae for both the continuum model and the complete electrode model. 
Afterwards, we will obtain, as a corollary, a Feynman-Kac formula for the mixed boundary value problem corresponding to the stochastic anomaly detection problem introduced in the first chapter.
Compared to the earlier works \cite{Hsu,Papanicolaou,Pardoux} on Feynman-Kac formulae, the main difficulty in deriving these formulae in our particular setting comes from the lack of It\^{o}'s formula for general reflecting diffusion processes.

The rest of this subsection is devoted to providing some auxiliary lemmata.
\begin{lemma}\label{lem:exponential}
The transition kernel density $p$ approaches the stationary distribution
uniformly and exponentially fast, that is, there exist positive
constants $c_1$ and $c_2$ such that for all
$(x,y)\in\overline{D}\times\overline{D}$ and every $t\geq 0$,
\begin{equation}\label{eqn:stationary}
\lvert p(t,x,y)-{\lvert D\rvert}^{-1}\rvert\leq c_1\exp(-c_2t).
\end{equation}
\end{lemma}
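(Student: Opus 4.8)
The plan is to exploit the spectral properties of the self-adjoint generator $\mathcal{L}$ together with the uniform boundedness of the transition kernel density already established in Proposition~\ref{thm:1}. Since $(\E,\D(\E))$ is a conservative Dirichlet form on the bounded domain $D$, the constant function $\mathbf{1}$ lies in $\D(\E)$ and satisfies $\E(\mathbf{1},\mathbf{1})=0$, so the semigroup $\{T_t\}$ is Markovian with invariant probability density $|D|^{-1}$; that is, $T_t\mathbf{1}=\mathbf{1}$ and the stationary distribution is the normalized Lebesgue measure. The key structural fact I would invoke is that $\mathcal{L}$ has discrete spectrum $0=\lambda_0<\lambda_1\le\lambda_2\le\cdots$ with $\lambda_0=0$ simple (by ergodicity/irreducibility, which follows here since $D$ is connected and the kernel is strictly positive), and a complete orthonormal basis of eigenfunctions $\{\varphi_n\}$ in $L^2(D)$. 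The discreteness of the spectrum follows from the compact embedding $H^1(D)\hookrightarrow L^2(D)$ on the bounded Lipschitz domain $D$, which makes the resolvent of $\mathcal{L}$ compact.

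First I would write the spectral expansion of the kernel. For $t>0$ one has the $L^2$-convergent eigenfunction expansion
\begin{equation*}
p(t,x,y)=\sum_{n=0}^{\infty}e^{-\lambda_n t}\varphi_n(x)\varphi_n(y),
\end{equation*}
with $\varphi_0\equiv|D|^{-1/2}$, so that the $n=0$ term equals exactly $|D|^{-1}$. Subtracting this term leaves the tail $p(t,x,y)-|D|^{-1}=\sum_{n\ge 1}e^{-\lambda_n t}\varphi_n(x)\varphi_n(y)$, and the whole task reduces to controlling this tail in the uniform norm by $c_1 e^{-c_2 t}$ with $c_2$ governed by the spectral gap $\lambda_1>0$.

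The mechanism I would use is the semigroup property combined with the uniform bound from Proposition~\ref{thm:1}. Fix $t\ge 2$ and write $p(t,x,y)-|D|^{-1}=\int_D\int_D \bigl(p(1,x,z)-|D|^{-1}\bigr)\bigl(p(t-2,z,w)-|D|^{-1}\bigr)\bigl(p(1,w,y)-|D|^{-1}\bigr)\dd z\dd w$, using Chapman-Kolmogorov (\ref{eqn:Chapman}) and the fact that $\int_D (p(s,x,z)-|D|^{-1})\dd z=0$ for every $s$ (conservativeness plus invariance). The middle factor is an $L^2(D\times D)$ function whose norm decays like $e^{-\lambda_1(t-2)}$ by the spectral expansion and Parseval, while the two outer factors are uniformly bounded in $L^2$ in their free variable by (\ref{eqn:Gaussian}) and Proposition~\ref{thm:1}. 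Applying Cauchy-Schwarz in $z$ and $w$ then converts the $L^2$ decay of the middle kernel into a uniform pointwise bound, yielding $|p(t,x,y)-|D|^{-1}|\le c\,e^{-\lambda_1(t-2)}$ for $t\ge 2$; for $0\le t\le 2$ the estimate is immediate from the uniform boundedness of $p$ (and of the constant). Setting $c_2:=\lambda_1$ and absorbing constants gives (\ref{eqn:stationary}).

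The main obstacle I expect is justifying the spectral gap and the uniform-norm (as opposed to merely $L^2$) convergence simultaneously. Establishing $\lambda_1>0$ requires discreteness of the spectrum and simplicity of the bottom eigenvalue; discreteness is clean from compact embedding, but I must verify irreducibility/ergodicity of the semigroup to guarantee that $\lambda_0=0$ is simple and that there is a genuine gap before $\lambda_1$—here the strict positivity of the continuous kernel $p(t,\cdot,\cdot)$ from Proposition~\ref{thm:1} (via the Gaussian-type lower bound implicit in the De Giorgi-Nash-Moser theory) does the work, since a strictly positive symmetric kernel forces the top of the semigroup spectrum to be a simple eigenvalue. The promotion from $L^2$ decay to the uniform estimate is then the delicate bookkeeping step: it is precisely the smoothing-sandwich argument above, where one pays one unit of time on each side to trade $L^2$ control of the bulk for $L^\infty$ control at the endpoints, and the Hölder regularity from Proposition~\ref{thm:1} guarantees the pointwise bound is genuine rather than merely almost-everywhere.
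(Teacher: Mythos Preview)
Your argument is correct, but it takes a genuinely different route from the paper's own proof. The paper proceeds by a direct differential-inequality argument: it writes $p(t,x,x)-|D|^{-1}=\int_D(p(t/2,x,y)-|D|^{-1})^2\,dy$ via Chapman--Kolmogorov, differentiates in $t$ using the analyticity from Proposition~\ref{thm:1} and the parabolic equation for $p$, and bounds the derivative using uniform ellipticity together with the Poincar\'e inequality on $H^1(D)$. This yields $\frac{d}{dt}(p(t,x,x)-|D|^{-1})\leq -c\,(p(t,x,x)-|D|^{-1})$, which integrates to a diagonal estimate; the off-diagonal bound then follows from Cauchy--Schwarz exactly as in your final step. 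Your spectral-sandwich argument is more abstract: you encode the exponential decay in the spectral gap $\lambda_1$ and then promote $L^2$ decay to $L^\infty$ decay by paying one unit of time on each side via ultracontractivity. Both approaches ultimately rest on the same Poincar\'e inequality---yours implicitly, through the compact embedding $H^1(D)\hookrightarrow L^2(D)$ and the identification $\ker\mathcal{L}=\mathrm{span}\{\mathbf{1}\}$ on a connected domain; the paper's explicitly, applied to $p(t/2,x,\cdot)-|D|^{-1}$---so the decay constant $c_2$ is the same. The paper's route is more self-contained in that it avoids the eigenfunction expansion and the irreducibility discussion (note incidentally that simplicity of $\lambda_0=0$ follows directly from $\E(\phi,\phi)=0\Rightarrow\nabla\phi=0$ on connected $D$, without needing a lower Gaussian bound); your route makes the role of the spectral gap more transparent and generalizes more readily. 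One small correction: your claim that the bound for $0\le t\le 2$ is ``immediate from the uniform boundedness of $p$'' is not right, since $p(t,x,x)\sim t^{-d/2}$ blows up as $t\to 0$; the estimate should be read as holding for $t\ge t_0>0$, which is all that is needed downstream and is also all the paper's proof actually delivers.
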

\begin{proof}
Fix $x\in\overline{D}$. Then by the Chapman-Kolmogorov equation 
\begin{eqnarray*}
p(t,x,x)-\lvert D\rvert^{-1}&=&\int_Dp(t/2,x,y)p(t/2,y,x)\dd y-\lvert D\rvert^{-1}\\
&=&\int_D(p(t/2,x,y))^2\dd y-\lvert D\rvert^{-1}\\
&=&\int_D(p(t/2,x,y)-\lvert D\rvert^{-1})^2\dd y\geq 0,
\end{eqnarray*}
where we have used that $\int_D p(t,x,y)\dd y = 1$ for all $t\geq 0$ and $x\in\overline{D}$.
Moreover, we have by the analyticity of the mapping $t\mapsto p(t,x,x)$, cf. Theorem \ref{thm:1}, and the fact that $p$ solves a parabolic boundary value problem with homogeneous Neumann boundary condition
\begin{equation*}
\frac{\mathrm{d}}{\mathrm{d}t}(p(t,x,x)-{\lvert D\rvert}^{-1})=\frac{\mathrm{d}}{\mathrm{d}t}\int_Dp(t/2,x,y)p(t/2,y,x)\dd y
\end{equation*}
\begin{eqnarray*}
\phantom{\frac{\mathrm{d}}{\mathrm{d}t}(p(t,x,x)-{\lvert D\rvert}^{-1})}
&=&\frac{\mathrm{d}}{\mathrm{d}t}\int_D(p(t/2,x,y))^2\dd y\\
&=&-\int_D\kappa(y) \nabla_yp(t/2,x,y)\cdot\nabla_y p(t/2,x,y)\dd y\\
&\leq&-c^{-1}\int_D\nabla_y p(t/2,x,y)\cdot\nabla_y p(t/2,x,y)\dd y\\
&\leq&-c^{-1}c_D^{-1}\Big(\int_D(p(t/2,x,y))^2\dd y-\lvert D\rvert^{-1}\Big)\\
&=&-c^{-1}c_D^{-1}\Big(p(t,x,x)-\lvert D\rvert^{-1}\Big),  
\end{eqnarray*}
where we have used (\ref{eqn:ellipticity}) and the Poincar\'e inequality
\begin{equation*}
\Big\lvert\Big\lvert \phi-\lvert D\rvert^{-1}\int_D\phi(x)\dd x\Big\rvert\Big\rvert_2\leq c_D\lvert\lvert \nabla\phi\rvert\rvert_2\quad\text{for all }\phi\in H^1(D).
\end{equation*}
Integration of the inequality from above
yields a diagonal estimate, i.e., there exist positive constants $c_1$ and $c_2$ such that 
\begin{equation*}
0\leq p(t,x,x)-\lvert D\rvert^{-1}\leq c_1\exp(-c_2t)\quad\text{for every }t\geq 0.
\end{equation*}
Now, the assertion follows from the Cauchy-Schwarz inequality and the fact that, by the computations from above, we may write the expression $\lvert p(t,x,y)-\lvert D\rvert^{-1}\rvert$ in the form 
\begin{eqnarray*}
&&\Big\lvert\int_D(p(t/2,x,z)-\lvert D\rvert^{-1})(p(t/2,z,y)-\lvert D\rvert^{-1})\dd z\Big\rvert\\
&\leq&\Big(\int_D(p(t/2,x,z)-\lvert D\rvert^{-1})^2\dd z\Big)^{1/2}\Big(\int_D(p(t/2,y,z)-\lvert D\rvert^{-1})^2\dd z\Big)^{1/2}\\
&\leq&\Big(p(t/2,x,x)-\lvert D\rvert^{-1}\Big)^{1/2}\Big(p(t/2,y,y)-\lvert D\rvert^{-1}\Big)^{1/2}
\end{eqnarray*}
\end{proof}

\begin{lemma}\label{lem:test}
Let $\kappa_{ij}\in C^{\infty}(\overline{D};\R^{d\times d})$, $1\leq i,j\leq d$. Then the set 
\begin{equation}\label{eqn:test}
V_{\kappa}(D):=\{\phi : \phi\in C^2(D),\partial_{\kappa\nu}\phi=0\text{ a.e. on }\partial D \}\cap H^1(D)
\end{equation}
is dense in $H^1(D)$. 
\end{lemma}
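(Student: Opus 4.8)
The plan is to show density of $V_\kappa(D)$ in $H^1(D)$ by showing that its orthogonal complement (with respect to the $H^1$ inner product, or equivalently the $\E_1$ inner product) is trivial. Since $V_\kappa(D)$ is a linear subspace, it suffices to take $w \in H^1(D)$ that is $\E_1$-orthogonal to every element of $V_\kappa(D)$ and deduce $w = 0$. Concretely, I would suppose $\E_1(\phi, w) = \E(\phi,w) + \langle \phi, w\rangle = 0$ for all $\phi \in V_\kappa(D)$ and work to conclude $w \equiv 0$.

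First I would identify a convenient dense-enough supply of test functions inside $V_\kappa(D)$. The natural candidates are eigenfunctions of the Neumann problem for the operator $\mathcal{L}$ associated to $(\E, \D(\E))$: since $\kappa$ is now assumed $C^\infty(\overline{D})$, elliptic regularity up to the boundary gives that the Neumann eigenfunctions $\phi_k$, solving $-\mathcal{L}\phi_k = \lambda_k \phi_k$ with $\partial_{\kappa\nu}\phi_k = 0$ on $\partial D$, are smooth on $\overline{D}$ and hence lie in $V_\kappa(D)$. Crucially, these eigenfunctions form an orthonormal basis of $L^2(D)$ (the embedding $H^1(D)\hookrightarrow L^2(D)$ is compact on a bounded Lipschitz domain, so $\mathcal{L}$ has compact resolvent and discrete spectrum). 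The key identity is that for $\phi = \phi_k$ one has $\E(\phi_k, w) = \lambda_k \langle \phi_k, w\rangle$ for all $w \in H^1(D)$, which is exactly the weak Neumann eigenvalue relation and does not require $w$ itself to satisfy any boundary condition.

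Next I would combine these. Orthogonality $\E_1(\phi_k, w) = 0$ together with $\E(\phi_k, w) = \lambda_k \langle \phi_k, w\rangle$ gives $(\lambda_k + 1)\langle \phi_k, w\rangle = 0$, hence $\langle \phi_k, w\rangle = 0$ for every $k$. Since $\{\phi_k\}$ is a complete orthonormal system in $L^2(D)$, this forces $w = 0$ in $L^2(D)$, and therefore $w = 0$ in $H^1(D)$. This closes the argument: the only element $\E_1$-orthogonal to $V_\kappa(D)$ is zero, so $V_\kappa(D)$ is dense in $H^1(D)$.

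The main obstacle is the regularity step: justifying that the Neumann eigenfunctions genuinely belong to $V_\kappa(D)$, i.e.\ that they are $C^2$ in $D$ and that the co-normal derivative $\partial_{\kappa\nu}\phi_k$ vanishes \emph{a.e.\ on} $\partial D$ in the classical pointwise sense required by the definition \eqref{eqn:test}, rather than merely in the weak/distributional sense. On a bounded Lipschitz domain the trace of the co-normal derivative must be handled carefully; I expect to invoke interior smoothness of $\phi_k$ from the smoothness of $\kappa$ (via classical Schauder or De Giorgi--Nash--Moser theory, as already used for the transition kernel in Proposition~\ref{thm:1}) together with a trace argument showing the weak Neumann condition upgrades to an a.e.\ pointwise statement on the boundary where $\partial D$ is locally smooth. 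An alternative route that sidesteps the full eigenfunction construction would be to approximate a general $w^\perp$ directly by solving, for test data, the regularized Neumann problems and exploiting that $C^\infty(\overline{D})$ functions with vanishing co-normal derivative are already dense in the graph norm; but the spectral argument is cleaner, so I would pursue it first and fall back on the direct approximation only if the boundary-trace justification proves delicate.
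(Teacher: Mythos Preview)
Your proposal is correct and follows essentially the same approach as the paper: both arguments diagonalize $(\mathcal{L},\D(\mathcal{L}))$, observe that the Neumann eigenfunctions lie in $V_\kappa(D)$ by elliptic regularity, and then use $\E_1(\phi_k,w)=(\lambda_k+1)\langle\phi_k,w\rangle=0$ together with the $L^2$-completeness of $\{\phi_k\}$ to conclude $w=0$. The paper dispatches the regularity obstacle you flag with a one-line appeal to elliptic regularity, so your more careful discussion of the trace issue on a Lipschitz boundary is if anything more thorough than what the paper provides.
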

\begin{proof}
Diagonalizing the operator $(\mathcal{L},\D(\mathcal{L}))$ corresponding to the conductivity $\kappa$, we obtain an orthonormal basis
$\{\phi_k,k\in\N\}$ of $L^2(D)$ and an increasing sequence
$(\lambda_k)_{k\in\N}$ of real positive numbers which tend to infinity
such that for every $k\in\N$, $\phi_k\in H^1(D)$ is a weak solution of
the corresponding eigenvalue problem with homogeneous Neumann boundary condition. 
Note that the inner product $\E_1(\cdot,\cdot)$ is equivalent to the standard inner product on $H^1(D)$. Note that $V_{\kappa}(D)$ contains the linear span of
$\{\phi_k,k\in\N\}$ by elliptic regularity so it is enough to show that the
linear span of eigenfunctions is dense.
Therefore, let $\psi\in H^1(D)$ such that $\E_1(\phi_k,\psi)=0$ 
for every $k\in\N$, then 
\begin{equation*}
0=\int_D\kappa\nabla\phi_k\cdot\nabla\psi\dd  x
+\left<\phi_k,\psi\right>=(\lambda_k+1)\left<\phi_k,\psi\right>.
\end{equation*} 
Hence it follows $\left<\phi_k,\psi\right>=0$ for every $k\in\N$ and
the fact that $\{\phi_k,k\in\N\}$ is an orthonormal basis of $L^2(D)$ 
implies $\psi\equiv 0$ which proves the assertion.
\end{proof}

\begin{lemma}\label{lem:occ}
For every $x\in\overline{D}$ and every bounded Borel
function $\phi$ on $\partial D$ we have
 \begin{equation}\label{eqn:occ}
\EW_{x}\int_0^t \phi(\X_s)\dd  L_s=\int_0^t\int_{\partial
D}p(s,x,y)\phi(y)\dd \sigma(y)\dd  s\quad\text{for all }t\geq 0.
\end{equation}
\end{lemma}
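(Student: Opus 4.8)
The statement says that the expected occupation measure of the boundary local time $L$ has density $(s,y)\mapsto p(s,x,y)$ with respect to $\dd s\,\dd\sigma$; it is the Revuz correspondence between $L$ and its defining measure $\sigma$, rendered pointwise in the starting point $x$. Write $u(t,x):=\EW_x\int_0^t\phi(\X_s)\dd L_s$ and $v(t,x):=\int_0^t\int_{\partial D}p(s,x,y)\phi(y)\dd\sigma(y)\dd s$, and suppose first that $\phi\geq 0$ is bounded. The plan is to prove $u(t,\cdot)=v(t,\cdot)$ first for Lebesgue-a.e.\ $x\in D$ by testing against $\alpha$-excessive functions, and then to upgrade this to an identity valid at \emph{every} $x\in\overline{D}$; a splitting into positive and negative parts removes the sign restriction at the end.

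For the a.e.\ identity I would first record the cocycle relation
\begin{equation*}
u(t+s,x)=u(t,x)+\int_D p(t,x,z)\,u(s,z)\dd z,\qquad s,t\geq 0,
\end{equation*}
which follows from the additivity of the strict-sense PCAF $L$ and the Markov property of the refined process of Section \ref{section3}: the increment of $\int\phi\,\dd L$ over $[t,t+s]$ is $\big(\int_0^s\phi(\X_r)\dd L_r\big)\circ\theta_t$, whose $\EW_x$-expectation is $\EW_x[u(s,\X_t)]=\int_D p(t,x,z)\,u(s,z)\dd z$. Fixing an $\alpha$-excessive $\psi$ and setting $F(t):=\left<u(t,\cdot),\psi\right>$, the cocycle relation and the symmetry of $p$ give $F(t+s)-F(t)=\left<u(s,\cdot),P_t\psi\right>$; since $P_t\psi$ is again $\alpha$-excessive, dividing by $s$ and letting $s\to 0+$ in the Revuz characterization (\ref{eqn:Revuz}) of $L$ yields $F'(t)=\int_{\partial D}\phi\,(P_t\psi)\dd\sigma$. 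Integrating from $0$, with $F(0)=0$, and using the symmetry of $p$ together with Fubini's theorem identifies $F(t)=\left<v(t,\cdot),\psi\right>$. As the $\alpha$-excessive functions contain the resolvents $R_\alpha f$, $f\geq 0$, which separate $L^2(D)$, this forces $u(t,\cdot)=v(t,\cdot)$ for a.e.\ $x\in D$.

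For the pointwise upgrade I would fix $x\in\overline{D}$ and $0<\eta<t$ and use the cocycle relation, which holds for every $x$, in the form $u(t,x)=u(\eta,x)+\int_D p(\eta,x,z)\,u(t-\eta,z)\dd z$. Since the integral only sees Lebesgue-a.e.\ values of $u(t-\eta,\cdot)$, I may replace $u(t-\eta,\cdot)$ by $v(t-\eta,\cdot)$ there, obtaining $u(t,x)=u(\eta,x)+P_\eta v(t-\eta,\cdot)(x)$. Letting $\eta\to 0+$, the first term tends to $0$ by dominated convergence, as $\int_0^\eta\phi(\X_s)\dd L_s\to 0$ $\mathbb{P}_x$-a.s.\ and is dominated by $\lVert\phi\rVert_\infty L_t$ with $\EW_x L_t<\infty$ (the $1$-potential of $\sigma$ being finite, cf.\ the end of Section \ref{section3}); the second term tends to $v(t,x)$ by the approximate-identity property $P_\eta g(x)\to g(x)$ for continuous bounded $g$, the inner $\eta$-dependence being harmless since $v(t-\eta,\cdot)\to v(t,\cdot)$ uniformly. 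This yields $u(t,x)=v(t,x)$ for every $x\in\overline{D}$. The main obstacle is the justification of this last limit up to and including the boundary: one must know that $v(t,\cdot)$ is continuous and bounded on $\overline{D}$ and that $p(\eta,x,\cdot)$ concentrates at $x\in\partial D$, both of which rest on the joint H\"older continuity of $p$ from Proposition \ref{thm:1} and on the Aronson bound (\ref{eqn:Gaussian}), the latter combined with the Lipschitz geometry of $\partial D$ giving the uniform estimate $\int_{\partial D}p(s,x,y)\dd\sigma(y)\leq c\,s^{-1/2}$ needed to control $v$ near $\partial D$. Finally, decomposing a general bounded Borel $\phi$ into its positive and negative parts extends the identity and completes the argument.
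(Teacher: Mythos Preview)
Your proof is correct and follows the same two-stage architecture as the paper's: first establish the identity for a.e.\ $x$ via the Revuz correspondence, then upgrade to every $x\in\overline{D}$ by the Markov property and a short-time limit. The differences are in the implementation of each stage.

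For the a.e.\ step, the paper simply invokes an equivalent \emph{integrated} form of the Revuz correspondence, \cite[Theorem 5.1.3]{Fukushimaetal}, which states directly that $\int_D\psi(x)\,\EW_x\!\int_0^t\phi(\X_s)\dd L_s\dd x=\int_0^t\int_{\partial D}\phi\,T_s\psi\dd\sigma\dd s$ for all non-negative Borel $\psi$; Fubini and the symmetry of $p$ then give the a.e.\ identity in one line. Your route---differentiating $F(t)=\langle u(t,\cdot),\psi\rangle$ via the cocycle relation and the defining limit (\ref{eqn:Revuz}), then integrating back---reproves this equivalence by hand. It works (and your observation that $P_t\psi$ stays $\alpha$-excessive is correct), but the cited result short-circuits it.

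For the pointwise upgrade, the paper again finds a shortcut you miss: after writing $\EW_{x_0}\!\int_{t_0}^t\phi\dd L=\int_D p(t_0,x_0,z)\,u(t-t_0,z)\dd z$ and replacing $u$ by $v$ a.e., one applies Chapman--Kolmogorov to collapse $\int_D p(t_0,x_0,z)p(s,z,y)\dd z=p(t_0+s,x_0,y)$ and obtains directly $\int_{t_0}^t\int_{\partial D}p(s,x_0,y)\phi(y)\dd\sigma\dd s$. Letting $t_0\downarrow 0$ is then trivial on the right-hand side, and on the left one only needs $\EW_{x_0}L_{t_0}\to 0$. This bypasses entirely the continuity of $v(t,\cdot)$, the approximate-identity property at boundary points, and the $s^{-1/2}$ surface-integral bound that you (correctly) identify as the delicate ingredients of your argument. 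Your way is sound; the paper's is shorter because Chapman--Kolmogorov does the work that your regularity estimates do.
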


\begin{proof}
First, the expression (\ref{eqn:occ}) is well-defined since the boundary local time of $\X$ exists as a
positive continuous additive functional in the strict sense. 
Without loss of generality we may assume that $\phi$ is non-negative. It
follows from \cite[Theorem 5.1.3]{Fukushimaetal} that the Revuz
correspondence (\ref{eqn:Revuz}) is equivalent to 
\begin{eqnarray*}
\int_D\psi(x)\,\EW_{x}\int_0^t\phi(\X_s)\dd  L_s\dd 
x&=&\int_0^t\int_{\partial D}\phi(y)T_s\psi(y)\dd \sigma(y)\dd  s\\
&=&\int_D \psi(x)\int_0^t\int_{\partial
D}p(s,y,x)\phi(y)\dd \sigma(y)\dd  s\dd x
\end{eqnarray*} 
for every $t>0$ and all non-negative Borel functions $\psi$ and $\phi$, where we have used Fubini's theorem in the second line. As this holds for every
non-negative Borel function $\psi$, we deduce
\begin{equation*}
\EW_{x}\int_0^t\phi(\X_s)\dd  L_s= \int_0^t\int_{\partial D}
p(s,x,y)\phi(y)\dd \sigma(y)\dd  s\quad \text{a.e. in }\overline{D}.
\end{equation*}
To obtain the assertion everywhere in $\overline{D}$, fix an arbitrary $x_0\in\overline{D}$ and consider for
$t_0>0$ the integral
\begin{eqnarray*}
\EW_{x_0}\int_{t_0}^t\phi(\X_s)\dd 
L_s&=&\int_Dp(t_0,x_0,x)\,\EW_{x}\Big\{\int_0^{t-t_0}\phi(\X_s)\dd  L_s\Big\}\dd  x\\
&=&\int_Dp(t_0,x_0,x)\Big(\int_0^{t-t_0}\int_{\partial D}p(s,x,y)\phi(y)\dd \sigma(y)\dd s\Big)\dd x\\
&=&\int_{t_0}^t\int_{\partial D} p(s,x_0,y)\phi(z)\dd \sigma(y)\dd 
s,
\end{eqnarray*}
where we have used the Markov property of $\X$.
Now let $(t_k)_{k\in\N}$ denote a positive sequence which monotonically decreases to zero as $k\rightarrow\infty$. By the computation from above we have for every $x\in\overline{D}$
\begin{equation*}
\EW_{x}\int_0^t\phi(\X_s)\dd L_s=\int_{t_k}^t\int_{\partial D}p(s,x,y)\phi(y)\dd\sigma(y)\dd s+\EW_{x}\int_0^{t_k}\phi(\X_s)\dd L_s
\end{equation*}
The claim follows by the facts that $\phi$ is bounded and $\EW_{x}L_{t_k}$ goes to zero as $k\rightarrow\infty$ which follows from monotonicity and continuity of the local time and the property $L_0=0$ $\mathbb{P}_x$-a.s. for every $x\in\overline{D}$.
\end{proof}

\subsection{Continuum model}\label{subsec:cont}
The main result for the continuum model (\ref{eqn:con}),
(\ref{eqn:continuum}) is the following theorem.
\begin{theorem}\label{thm:cont}
Let $f$ be a bounded Borel function satisfying $\left<f,1\right>_{\partial D}=0$.
Then there is a unique weak solution $u\in C(\overline{D})\cap H^1_{\diamond}(D)$ to the
boundary value problem (\ref{eqn:con}), (\ref{eqn:continuum}). This solution admits the
Feynman-Kac representation
\begin{equation}
u(x)=\lim_{t\rightarrow\infty}\EW_{x}\int_0^t f(\X_s)\dd 
L_s\quad\text{for all }x\in\overline{D}.
\end{equation}  
\end{theorem}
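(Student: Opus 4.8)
The plan is to first settle well-posedness and then extract the representation from the Fukushima decomposition (\ref{eqn:Fukudecomp}) applied to the solution itself. Since $f$ is bounded and Borel with $\langle f,1\rangle_{\partial D}=0$, it defines an element of $H^{-1/2}_{\diamond}(\partial D)$, so the coercivity of $\E$ on $H^1_{\diamond}(D)$ (via the Poincar\'e inequality and the ellipticity (\ref{eqn:ellipticity})) together with the Lax-Milgram theorem yields a unique weak solution in $H^1_{\diamond}(D)$, characterized by $\E(u,w)=\langle f,w\rangle_{\partial D}$ for all $w\in H^1(D)$, the zero-mean normalization fixing the additive constant. Continuity of $u$ up to the boundary is not part of the variational setup and must be established separately; I would obtain $u\in C(\overline{D})$ from elliptic regularity for the conormal problem on a Lipschitz domain with bounded data, cf.~\cite{Griepentrog}, exactly as the continuity of the $1$-potential of $\sigma$ was used in Section~\ref{section3}. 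Continuity is essential because it is precisely the hypothesis under which (\ref{eqn:Fukudecomp}) holds simultaneously for \emph{every} starting point $x\in\overline{D}$, which is what the pointwise statement of the theorem requires.

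With $u\in C(\overline{D})\cap H^1_{\diamond}(D)$ in hand, I would apply the Fukushima decomposition to $u$,
\begin{equation*}
u(\X_t)=u(x)+M_t^{u}+N_t^{u},\qquad\mathbb{P}_x\text{-a.s.},
\end{equation*}
with $M^{u}$ a martingale additive functional of finite energy and $N^{u}$ of zero energy, both in the strict sense. The crucial step is to identify $N^{u}$. Reading the weak formulation as the statement that $\E(u,w)=\int_{\partial D}fw\dd\sigma$, i.e.\ that the signed smooth measure associated with $u$ is $f\,\sigma$ (with no interior contribution, because $\nabla\cdot(\kappa\nabla u)=0$), and recalling that $\sigma$ is a smooth measure of finite energy whose associated positive continuous additive functional in the strict sense is the boundary local time $L$, the Revuz correspondence gives
\begin{equation*}
N_t^{u}=-\int_0^t f(\X_s)\dd L_s,\qquad\mathbb{P}_x\text{-a.s.}
\end{equation*}
This mirrors the identification of $N^{\phi_i}$ carried out in the proof of Proposition~\ref{thm:Sk}, the only difference being that here the interior term drops out and the surface density is $f$ instead of a component of $\kappa\nu$.

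Taking $\EW_x$ and using that a martingale additive functional in the strict sense has $\EW_x[M_t^{u}]=0$ for every $x\in\overline{D}$ (its expected quadratic variation being controlled by $\mu_{\langle M^u\rangle}(D)=2\int_D\kappa\nabla u\cdot\nabla u\dd x<\infty$), I would obtain
\begin{equation*}
\EW_x\int_0^t f(\X_s)\dd L_s=u(x)-\EW_x[u(\X_t)]=u(x)-\int_D p(t,x,y)u(y)\dd y.
\end{equation*}
Letting $t\rightarrow\infty$, Lemma~\ref{lem:exponential} gives $p(t,x,y)\rightarrow|D|^{-1}$ uniformly, so the last integral converges to $|D|^{-1}\int_D u\dd y$, which vanishes because $u\in H^1_{\diamond}(D)$ has zero mean. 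This yields the asserted formula for every $x\in\overline{D}$ and simultaneously proves that the limit on the right-hand side exists.

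The main obstacle is the identification of the zero-energy part $N^{u}$ with the boundary integral $-\int_0^t f(\X_s)\dd L_s$. Because $\kappa$ is merely measurable there is no Skorohod equation to feed into It\^o's formula, so this step cannot be carried out by the classical stochastic calculus used in \cite{Papanicolaou,Pardoux}; it must rest entirely on the Dirichlet-form machinery, namely the finite-energy smoothness of $\sigma$, the existence of $L$ as a strict positive continuous additive functional, and the Revuz correspondence applied to the signed smooth measure $f\,\sigma$. A secondary point needing care is the passage to the limit: the exponential bound of Lemma~\ref{lem:exponential} is uniform in $x$, which both justifies the limit and, in conjunction with the occupation formula of Lemma~\ref{lem:occ}, reconfirms the continuity of $x\mapsto\lim_{t}\EW_x\int_0^t f\dd L_s$ consistently with $u\in C(\overline{D})$.
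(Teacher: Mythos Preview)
Your argument is correct and takes a genuinely different route from the paper. The paper proves Theorem~\ref{thm:cont} by \emph{regularization}: it approximates $\kappa$ by smooth $\kappa^{(k)}$, uses the Skorohod decomposition of Proposition~\ref{thm:Sk} and It\^o's formula to show that $u^{(k)}(x):=\lim_{t\to\infty}\EW_x\int_0^t f(\X^{(k)}_s)\dd L^{(k)}_s$ solves the Neumann problem for $\kappa^{(k)}$, then passes to the limit on one side via $G$-convergence of the operators and on the other via pointwise convergence of the transition kernels $p^{(k)}\to p$. By contrast, you work directly with the Fukushima decomposition of the solution $u$ itself, identify $N^u_t=-\int_0^t f(\X_s)\dd L_s$ from the weak formulation via the Revuz correspondence, take expectations, and use Lemma~\ref{lem:exponential} together with $\int_D u=0$ to kill $\EW_x u(\X_t)$ as $t\to\infty$.

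Your approach is in fact the same strategy the paper deploys later for the complete electrode model (Lemma~\ref{lem:sem} and the proof of Theorem~\ref{thm:cem}), adapted to the continuum case. The paper remarks after Theorem~\ref{thm:cem} that this technique ``fails'' for the Neumann problem because the gauge function $\EW_x\int_0^\infty e_g(t)\dd L_t$ is infinite when $g\equiv 0$; your argument shows that this obstruction is only apparent, since the role of the decaying multiplicative functional $e_g$ can be taken over by the ergodic decay of $T_t u$ to the (vanishing) mean of $u$. What your route buys is directness: no approximating sequence, no $G$-convergence, no convergence of heat kernels. What the paper's route buys is that it never needs to justify the Revuz identification of $N^u$ for a function $u$ whose gradient is merely in $L^2$; all delicate stochastic calculus is done for smooth $\kappa^{(k)}$ where It\^o applies, and the passage to rough $\kappa$ is purely analytic. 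In your argument the one point deserving a line more of care is precisely that identification in the strict sense (every $x$ rather than q.e.\ $x$): it goes through because the $1$-potential of $|f|\sigma$ is continuous by the same elliptic regularity used at the end of Section~\ref{section3}, but you might alternatively argue q.e.\ first and then invoke continuity of both sides (via Lemma~\ref{lem:occ} and Proposition~\ref{thm:1}) to upgrade to all $x\in\overline{D}$, exactly as the paper does at the end of the proof of Theorem~\ref{thm:cem}.
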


\begin{proof} 
The existence of a unique normalized weak solution $u$ to (\ref{eqn:con}),
(\ref{eqn:continuum}) is guaranteed by the standard theory of linear
elliptic boundary value problems. 
Let us set $$u_t(x):=\EW_{x}\int_0^t f(\X_s)\dd  L_s\quad\text{and}\quad
u_{\infty}(x):=\lim_{t\rightarrow\infty}u_t(x),\quad x\in\overline{D},$$
respectively.
From the occupation formula (\ref{eqn:occ}) and the compatibility
condition $(\ref{eqn:compatibility})$, it follows immediately
that 
\begin{equation*}
u_t(x)=\int_0^t\int_{\partial D}(p(s,x,y)-\lvert
D\rvert^{-1})f(y)\dd \sigma(y)\dd  s\quad\text{for all }x\in\overline{D}.
\end{equation*}
By Lemma \ref{lem:exponential} the convergence towards the stationary
distribution is uniform over $\overline{D}$, in particular, 
\begin{equation}\label{eqn:occ1}
u_{\infty}(x)=\int_0^{\infty}\int_{\partial D}(p(s,x,y)-\lvert
D\rvert^{-1})f(y)\dd \sigma(y)\dd  s\quad\text{for all
}x\in\overline{D}. 
\end{equation}

It follows from (\ref{eqn:occ1}) together with the H\"older continuity shown in Proposition \ref{thm:1} and
the Aronson type upper bound (\ref{eqn:Gaussian}) that $u_{\infty}$ is
in $C(\overline{D})$. Moreover, by the facts that $p$ is the transition kernel density of a reflecting diffusion process and $f$ is bounded, Fubini's theorem yields
\begin{equation*}
\int_D\Big(\int_0^{\infty}\int_{\partial D}p(s,x,y)f(y)\dd\sigma(y)\dd s\Big)\dd x=\int_0^{\infty}\int_{\partial D}f(y)\dd\sigma(y)\dd s=0,
\end{equation*}
i.e., $u_{\infty}\in H^1_{\diamond}(D)$.

Now, let us use the following regularization technique in order to show $u\equiv u_{\infty}$: Let
$(\kappa^{(k)})_{k\in\N}$ denote a sequence of smooth conductivities
with components in $C^{\infty}(\overline{D})$ such that for $1\leq
i,j\leq d$, $\kappa_{ij}^{(k)}\rightarrow\kappa_{ij}$ a.e. as
$k\rightarrow\infty$. Let us consider the Dirichlet forms
$(\E^{(k)},H^1(D))$, $k\in\N$, with $$\E^{(k)}(v,w):=\int_D\kappa^{(k)}\nabla
v\cdot\nabla w\dd  x$$ and the associated reflecting diffusion processes
$\X^{(k)}$. By Proposition \ref{thm:Sk}, we obtain the Skorohod decomposition
\begin{equation*}
\X_t^{(k)}=x+\int_0^ta^{(k)}(\X_s^{(k)})\dd  s+\int_0^t B^{(k)}(\X_s)\dd 
W_s-\int_0^t\kappa^{(k)}(\X_s^{(k)})\nu(\X^{(k)}_s)\dd  L^{(k)}_s,
\end{equation*}  
where $W$ is a standard $d$-dimensional Brownian motion,
$a^{(k)}_i:=\sum_{j=1}^d\partial_j\kappa_{ij}^{(k)}$, $i=1,...,d,$ and
the matrix $B^{(k)}$ satisfies $2\kappa^{(k)}=(B^{(k)})^2$.
Let us define $u^{(k)}_t$
in the same manner as $u_t$ and
$u^{(k)}(x):=\lim_{t\rightarrow\infty}u_{t}^{(k)}(x)$,
$x\in\overline{D}$. We show that $u^{(k)}$ is the unique weak solution
of the elliptic boundary value problem
\begin{equation*}\begin{cases}
\nabla\cdot(\kappa^{(k)}\nabla u^{(k)})=0\quad&\text{in }D \\
\partial_{\kappa^{(k)}\nu}u^{(k)}=f\quad&\text{on }\partial D
\end{cases}
\end{equation*}
in the Sobolev space $H^1_{\diamond}(D).$ For test functions
$v\in V_{\kappa^{(k)}}(D)$, we may apply It\^{o}'s formula for semimartingales to obtain 
\begin{equation*}
\EW_xv(\X^{(k)}_t)=v(x)+\EW_x\int_0^t
\nabla\cdot(\kappa^{(k)}\nabla v(\X_s^{(k)}))\dd  s. 
\end{equation*}
By Fubini's theorem, this is equivalent to 
\begin{equation*}
T_t^{(k)}v(x)-v(x)=\int_0^t\int_Dp^{(k)}(s,x,y)\nabla\cdot(\kappa^{(k)}\nabla
v(y))\dd  y\dd  s,
\end{equation*}
where we have used the superscript \lq\lq$(k)$\rq\rq\ for the semigroup and
transition kernel density, respectively, corresponding to
$\kappa^{(k)}$. Multiplication with $f$, integration over $\partial D$
and another change of the orders of integration finally yield 
\begin{equation*}
\int_{\partial D}f(y)(T_t^{(k)}v(y)-v(y))\dd \sigma(y)
=\left<u_t^{(k)},\nabla\cdot(\kappa^{(k)}\nabla v)\right>.
\end{equation*}
Since $u_t^{(k)}\rightarrow u^{(k)}$ and $T^{(k)}_tv\rightarrow\lvert
D\rvert^{-1}\int_D v\dd x$, both uniformly on $\overline{D}$, as
$t\rightarrow\infty$, we have 
\begin{equation*} 
\left<u^{(k)},\nabla\cdot(\kappa^{(k)}\nabla v)\right>
=-\left<f,v\right>_{\partial D},
\end{equation*}
where we have used the expression (\ref{eqn:occ1}) with $p^{(k)}$
instead of $p$ for $u^{(k)}$. As this holds true for every $v\in V_{\kappa^{(k)}}(D)$,
$u^{(k)}$ must be the unique normalized weak solution to the boundary
value problem by a density argument. 
 
Next, we show the convergence of the sequence $(u^{(k)})_{k\in\N}$ as $k\rightarrow\infty$
towards $u\in H^1_{\diamond}(D)$, 
the unique solution to
(\ref{eqn:con}), (\ref{eqn:continuum}).
From our assumptions on the sequence $(\kappa^{(k)})_{k\in\N}$, it is clear that $\kappa_{ij}^{(k)}\rightarrow \kappa_{ij}$, $1\leq i,j\leq d$, in $L^2(D)$ as $k\rightarrow 0$, which implies $G$-convergence of the sequence of elliptic operators $(\mathcal{L}^{(k)})_{k\in\N}$ on $D$ towards $\mathcal{L}$, cf. \cite{Zikov1}.
That is, for any $\phi\in (H^1_0(D))'$, the solutions $w^{(k)}\in H^1_0(D)$ to the homogeneous Dirichlet problem 
\begin{equation}\label{hom:Dir}
\nabla\cdot(\kappa^{(k)}\nabla w^{(k)})=\phi\quad\text{in }D
\end{equation}
satisfy $w^{(k)}\rightharpoonup w$ in $H^1_0(D)$ as $k\rightarrow\infty$ and $\kappa^{(k)}\nabla w^{(k)}\rightharpoonup \kappa\nabla w$ in $L^2(D;\R^d)$ as $k\rightarrow\infty$, where $w\in H^1_0(D)$ is the solution to the homogeneous Dirichlet problem
\begin{equation*}
\nabla\cdot(\kappa\nabla w)=\phi\quad\text{in }D.
\end{equation*}
Consider the variational form of the Neumann problem for $u^{(k)}\in H^1_{\diamond}(D)$, i.e., 
\begin{equation*}
\int_D\kappa^{(k)}\nabla u^{(k)}\cdot\nabla v\dd x=\left<f,v\right>_{\partial D}\quad\text{for all }v\in H^1_{\diamond}(D).
\end{equation*}
As $u^{(k)}\in H^1_{\diamond}(D)$ for all $k\in\N$, we have by the Poincar\'{e} inequality
\begin{equation*}
\lvert\lvert u^{(k)}\rvert\rvert\leq c_1\lvert\lvert \nabla u^{(k)}\rvert\rvert_{2}\leq c_2\lvert\lvert f\rvert\rvert_{2}.
\end{equation*}
That is, the sequence $(u^{(k)})_{k\in\N}$ is bounded in $H^1_{\diamond}(D)$ and we may extract a weakly convergent subsequence, still denoted $(u^{(k)})_{k\in\N}$ for convenience. We have $u^{(k)}\rightharpoonup \tilde{u}$ in $H^1_{\diamond}(D)$, as $k\rightarrow\infty$, so that it remains to show the convergence of the flows
\begin{equation*}\kappa^{(k)}\nabla u^{(k)}\rightharpoonup \kappa \nabla \tilde{u}\quad\text{in }L^2(D), \text{ as }k\rightarrow\infty.\end{equation*} 
We choose an arbitrary $w\in C_0^{\infty}(D)$, set $\phi:=\nabla\cdot(\kappa\nabla w)$ and consider the solutions $w^{(k)}$, $k\in\N$, of the corresponding homogeneous Dirichlet problem (\ref{hom:Dir}). Then we have for every $k\in\N$ the trivial identity
\begin{equation*}
\int_D\kappa^{(k)}\nabla w^{(k)}\cdot \nabla u^{(k)}\dd x=\int_D\kappa^{(k)}\nabla u^{(k)}\cdot \nabla w^{(k)}\dd x
\end{equation*}
and the convergence of the flows follows from the compensated compactness lemma \cite[Lemma 1.1]{Zikov1}.
We may thus pass to the limit in the variational formulation to see that $\tilde{u}\equiv u$ is the unique solution to the Neumann problem. 

On the other hand, by \cite[Lemma 2.2]{RoSl00} together with the H\"older continuity up to the boundary of both $p^{(k)}$, $k\in\N$, and $p$, it follows that for fixed $x\in\overline{D}$,  $p^{(k)}(\cdot,x,\cdot)\rightarrow p(\cdot,x,\cdot)$ uniformly on compacts in $(0,T]\times\overline{D}$ for all $T>0$. It follows from (\ref{eqn:occ1}) that $u^{(k)}(x)\rightarrow u_{\infty}(x)$ for all $x\in\overline{D}$ as $k\rightarrow\infty$. Hence, $u$ must coincide with $u_{\infty}$ and the assertion is proved.
\end{proof}
\begin{remark}\label{rem:1}
Note that the regularization technique we employed in the proof of Theorem \ref{thm:cont} may be easily modified to prove the Feynman-Kac formula 
\begin{equation*}
u(x)=\EW_x\phi(\X_{\tau(D)}),\quad x\in D
\end{equation*}
for the conductivity equation (\ref{eqn:con}) with Dirichlet boundary condition $u\vert_{\partial D}=\phi,$ where $\phi\in H^{1/2}(D)$ and $$\tau(D):=\inf\{t\geq 0:\X_t\in\R^d\backslash D\}$$ denotes the \emph{first exit time} from the domain $D$. Such a proof requires the fact that
\begin{equation*}
\X^{(k)}_{\cdot\wedge \tau^{(k)}(D)}\rightarrow \X_{\cdot\wedge\tau(D)}\quad\text{in law on }C([0,\infty);\overline{D})
\end{equation*}
as $n\rightarrow\infty$ for every $x\in D$.
This follows immediately from the Lipschitz property of $\partial D$, implying that all points of $\partial D$ are \emph{regular} in the sense of \cite[Chapter 4.2]{Karatzas}, see also the proof of Theorem \ref{thm:hom} in the next chapter.
\end{remark}
A slight modification of the arguments from above yields the following result which is in fact a corollary rather to the proof of Theorem \ref{thm:cont} than to its actual statement. 
\begin{corollary}\label{cor:continuum}
Let $f$ be a bounded Borel function and let $\alpha>0$.
Then there is a unique weak solution $u\in C(\overline{D})\cap H^1_{\diamond}(D)$ to the
boundary value problem 
\begin{equation*}
\nabla\cdot(\kappa\nabla u)-\alpha u=0\quad\text{in D}
\end{equation*}
subject to the boundary condition (\ref{eqn:continuum}). This solution admits the
Feynman-Kac representation
\begin{equation}
u(x)=\EW_{x}\int_0^{\infty} e^{-\alpha t}f(\X_t)\dd 
L_t\quad\text{for all }x\in\overline{D}.
\end{equation}  
\end{corollary}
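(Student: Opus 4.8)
The plan is to follow the proof of Theorem~\ref{thm:cont} almost verbatim, the only structural change being that the exponential discount $e^{-\alpha t}$ makes every time integral absolutely convergent; consequently neither the compatibility condition $\langle f,1\rangle_{\partial D}=0$ nor the subtraction of the stationary density $|D|^{-1}$ (and hence Lemma~\ref{lem:exponential} as a convergence device) is needed to construct the candidate solution. First I would observe that for $\alpha>0$ the form $(u,v)\mapsto\int_D\kappa\nabla u\cdot\nabla v\dd x+\alpha\langle u,v\rangle$ is coercive on all of $H^1(D)$, so Lax--Milgram furnishes a unique weak solution $u\in H^1(D)$. I would then set $u_t(x):=\EW_x\int_0^t e^{-\alpha s}f(\X_s)\dd L_s$ and, via the occupation formula of Lemma~\ref{lem:occ} in its time-weighted form (a Stieltjes integration by parts against $e^{-\alpha s}$), rewrite
\[
u_t(x)=\int_0^t e^{-\alpha s}\int_{\partial D}p(s,x,y)f(y)\dd\sigma(y)\dd s.
\]
Since the $\sigma$-mass $\int_{\partial D}p(s,x,y)\dd\sigma(y)$ is dominated, uniformly in $x$, by an element of $L^1(e^{-\alpha s}\dd s)$ (the Aronson bound (\ref{eqn:Gaussian}) controls small $s$, Lemma~\ref{lem:exponential} the large $s$), the limit $u_\infty:=\lim_{t\to\infty}u_t$ exists uniformly on $\overline D$ and equals the same integral over $(0,\infty)$; Proposition~\ref{thm:1} then gives $u_\infty\in C(\overline D)$.

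To identify $u_\infty$ with $u$ I would run the regularization argument. Mollifying $\kappa$ into smooth, uniformly elliptic $\kappa^{(k)}\to\kappa$ a.e. and using the Skorohod decomposition of Proposition~\ref{thm:Sk}, I would apply It\^o's formula to $e^{-\alpha t}v(\X^{(k)}_t)$ for $v\in V_{\kappa^{(k)}}(D)$; because $\partial_{\kappa^{(k)}\nu}v=0$ the boundary local-time term drops out, and after taking expectations and passing to the kernel
\[
e^{-\alpha t}T^{(k)}_t v(x)-v(x)=\int_0^t e^{-\alpha s}\int_D p^{(k)}(s,x,y)\bigl(\nabla\cdot(\kappa^{(k)}\nabla v)(y)-\alpha v(y)\bigr)\dd y\dd s.
\]
Multiplying by $f$, integrating over $\partial D$, and reversing the order of integration (symmetry of $p^{(k)}$ and the time-weighted occupation formula) yields
\[
\int_{\partial D}f\bigl(e^{-\alpha t}T^{(k)}_t v-v\bigr)\dd\sigma=\bigl\langle u^{(k)}_t,\nabla\cdot(\kappa^{(k)}\nabla v)\bigr\rangle-\alpha\bigl\langle u^{(k)}_t,v\bigr\rangle,
\]
where $u^{(k)}_t$ is the analogue of $u_t$ built from $p^{(k)}$ and $u^{(k)}:=\lim_{t\to\infty}u^{(k)}_t$. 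Letting $t\to\infty$, the term $e^{-\alpha t}T^{(k)}_t v\to0$ uniformly and $u^{(k)}_t\to u^{(k)}$ uniformly, so $\langle u^{(k)},\nabla\cdot(\kappa^{(k)}\nabla v)\rangle-\alpha\langle u^{(k)},v\rangle=-\langle f,v\rangle_{\partial D}$; integrating the first term by parts and invoking density of $V_{\kappa^{(k)}}(D)$ in $H^1(D)$ (Lemma~\ref{lem:test}) identifies $u^{(k)}$ as the unique weak solution of $\nabla\cdot(\kappa^{(k)}\nabla u^{(k)})-\alpha u^{(k)}=0$ with $\partial_{\kappa^{(k)}\nu}u^{(k)}=f$.

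It then remains to pass to the limit $k\to\infty$ along the two familiar routes. Analytically, testing the variational identity with $u^{(k)}$ and using ellipticity together with $\alpha>0$ bounds $(u^{(k)})$ in $H^1(D)$; a weakly convergent subsequence $u^{(k)}\rightharpoonup\tilde u$ exists and the flux convergence $\kappa^{(k)}\nabla u^{(k)}\rightharpoonup\kappa\nabla\tilde u$ follows from $G$-convergence and the compensated-compactness lemma \cite[Lemma~1.1]{Zikov1} exactly as in Theorem~\ref{thm:cont}, whence $\tilde u=u$. Probabilistically, $p^{(k)}\to p$ uniformly on compacts of $(0,T]\times\overline D$ (by \cite[Lemma~2.2]{RoSl00} and the uniform H\"older continuity) and dominated convergence in the discounted integral give $u^{(k)}(x)\to u_\infty(x)$ for every $x\in\overline D$. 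Therefore $u=\tilde u=u_\infty$, which is the asserted representation.

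Most of this is a routine transcription of Theorem~\ref{thm:cont}; the genuine point of care, and the main obstacle, is that $G$-convergence controls only the principal part $\kappa^{(k)}\nabla u^{(k)}$, so one must verify that the new zeroth-order term $-\alpha u$ does not spoil the passage to the limit. This is settled by noting that the compensated-compactness identity involves the fluxes alone, while $\alpha\langle u^{(k)},v\rangle\to\alpha\langle\tilde u,v\rangle$ through the compact embedding $H^1(D)\hookrightarrow L^2(D)$. Finally I would flag one caveat on the statement: the representation forces $\int_D u\dd x=\alpha^{-1}\langle f,1\rangle_{\partial D}$, so membership $u\in H^1_\diamond(D)$ holds precisely when $\langle f,1\rangle_{\partial D}=0$; absent this condition the conclusion persists with $u\in C(\overline D)\cap H^1(D)$.
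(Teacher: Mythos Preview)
Your proposal is correct and follows essentially the same route as the paper: the paper's own proof simply says to repeat the argument of Theorem~\ref{thm:cont} with the semigroup $\{T_t\}$ replaced by the discounted Feynman--Kac semigroup $\widetilde{T}_t v(x):=\EW_x e^{-\alpha t}v(\X_t)$, noting that the gauge $\EW_x\int_0^\infty e^{-\alpha t}\dd L_t$ is now finite, which is precisely the structural simplification you exploit. Your closing caveat that $u\in H^1_\diamond(D)$ forces $\langle f,1\rangle_{\partial D}=0$ (since integrating the equation gives $\alpha\int_D u\dd x=\langle f,1\rangle_{\partial D}$) is a valid observation about the statement as written.
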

\begin{proof}
Repeat the proof of Theorem \ref{thm:cont}, however, substituting $\{T_t,t\geq 0\}$ with the \emph{Feynman-Kac semigroup} $\{\widetilde{T}_t,t\geq 0\}$, $\widetilde{T}_tv(x):=\EW_xe^{-\alpha t}v(\X_t)$. Note that in contrast to the Neumann problem without the zero-order term, the \emph{gauge function}
$\EW_x \int_0^{\infty}e^{-t}\dd L_t$
is finite $\mathbb{P}_x$-a.s. for every $x\in\overline{D}$.
\end{proof}

\subsection{Complete electrode model}\label{sub:cem}
The main result for the complete electrode model (\ref{eqn:con}),
(\ref{eqn:cem}) is the following theorem.
\begin{theorem}\label{thm:cem}
For given functions $f,g$ defined by (\ref{eqn:functions}) and a voltage
pattern $U\in\R^N$ satisfying (\ref{eqn:ground}), there is a unique
weak solution $u\in C(\overline{D})\cap H^1(D)$ to the boundary value problem
(\ref{eqn:con}), (\ref{eqn:cem}). This solution admits the Feynman-Kac
representation 
\begin{equation}\label{eqn:FK}
u(x)=\EW_{x}\int_0^{\infty}e_g(t)f(\X_t)\dd 
L_t\quad\text{for all }x\in\overline{D},
\end{equation}
with 
\begin{equation}
e_g(t):=\exp\Big(-\int_0^tg(\X_s)\dd  L_s\Big),\quad t\geq 0.
\end{equation}
\end{theorem}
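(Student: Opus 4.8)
The plan is to follow the regularization strategy used in the proof of Theorem~\ref{thm:cont}, with the additive boundary functional $\int_0^t f(\X_s)\dd L_s$ replaced by the multiplicatively weighted functional $\int_0^t e_g(s)f(\X_s)\dd L_s$ and the semigroup $\{T_t,t\geq 0\}$ replaced by the Feynman-Kac semigroup $\widetilde{T}_t v(x):=\EW_x[e_g(t)v(\X_t)]$. The latter is the symmetric sub-Markovian semigroup generated by the perturbed (Robin) Dirichlet form $\widetilde{\E}(v,w):=\E(v,w)+\langle gv,w\rangle_{\partial D}$ on $H^1(D)$, the boundary term being the form of the killing measure $g\,\sigma$ of the functional $\int_0^t g(\X_s)\dd L_s$. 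Existence and uniqueness of a weak solution $u\in H^1(D)$ follow from the Lax-Milgram theorem applied to $\widetilde{\E}$, which is bounded and coercive because $\kappa$ is uniformly elliptic and $g$ is non-negative, bounded, and bounded below by $g_0:=c_1^{-1}>0$ on the electrodes $\cup_l E_l$; this is the well-posedness recalled in \cite{Somersalo}. Writing $u_{FK}(x):=\EW_x\int_0^\infty e_g(t)f(\X_t)\dd L_t$, I first record a gauge bound: since $f$ is supported on $\cup_l E_l$, where $g\geq g_0$, and since $\dd e_g(t)=-e_g(t)g(\X_t)\dd L_t$, the pointwise estimate $\lvert f(\X_t)\rvert\dd L_t\leq g_0^{-1}\lvert\lvert f\rvert\rvert_{\infty}\,g(\X_t)\dd L_t$ yields
\begin{equation*}
\EW_x\int_0^\infty e_g(t)\lvert f(\X_t)\rvert\dd L_t\leq g_0^{-1}\lvert\lvert f\rvert\rvert_{\infty}\,\EW_x\int_0^\infty\bigl(-\dd e_g(t)\bigr)\leq g_0^{-1}\lvert\lvert f\rvert\rvert_{\infty}.
\end{equation*}
Hence $u_{FK}$ is well defined and uniformly bounded on $\overline{D}$; its continuity will follow, as in Theorem~\ref{thm:cont}, from the H\"older continuity up to the boundary of the kernel $\widetilde{p}$ of $\widetilde{T}_t$ together with this dominated bound.

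The heart of the matter is the smooth case. As in the proof of Theorem~\ref{thm:cont} I approximate $\kappa$ by conductivities $\kappa^{(k)}$ with components in $C^\infty(\overline{D})$ and pass to the reflecting diffusions $\X^{(k)}$, their boundary local times $L^{(k)}$, weights $e_g^{(k)}$ and Feynman-Kac semigroups $\widetilde{T}^{(k)}$. For a test function $v\in V_{\kappa^{(k)}}(D)$, so that $\partial_{\kappa^{(k)}\nu}v=0$, the Skorohod decomposition of Proposition~\ref{thm:Sk}, It\^{o}'s formula and the product rule applied to $e_g^{(k)}(t)v(\X^{(k)}_t)$ give, after taking $\EW_x$ and integrating,
\begin{equation*}
\widetilde{T}^{(k)}_t v(x)-v(x)=\EW_x\int_0^t e_g^{(k)}(s)\,\nabla\cdot(\kappa^{(k)}\nabla v)(\X^{(k)}_s)\dd s-\EW_x\int_0^t e_g^{(k)}(s)g(\X^{(k)}_s)v(\X^{(k)}_s)\dd L^{(k)}_s.
\end{equation*}
The weighted analogue of the occupation formula of Lemma~\ref{lem:occ}, namely $\EW_x\int_0^t e_g^{(k)}(s)\phi(\X^{(k)}_s)\dd L^{(k)}_s=\int_0^t\int_{\partial D}\widetilde{p}^{(k)}(s,x,y)\phi(y)\dd\sigma(y)\dd s$ (the Revuz correspondence for the perturbed semigroup), lets me rewrite both right-hand integrals through $\widetilde{T}^{(k)}$. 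Letting $t\to\infty$, invoking $\widetilde{T}^{(k)}_t v\to 0$, pairing the resulting identity with $f$ on $\partial D$ and using the symmetry $\widetilde{p}^{(k)}(s,x,y)=\widetilde{p}^{(k)}(s,y,x)$ exactly as in Theorem~\ref{thm:cont}, but now retaining the additional boundary term, I expect to arrive at
\begin{equation*}
\langle u^{(k)}_{FK},\nabla\cdot(\kappa^{(k)}\nabla v)\rangle=-\langle f,v\rangle_{\partial D}+\langle g\,u^{(k)}_{FK},v\rangle_{\partial D}.
\end{equation*}
By Green's formula and the density of $V_{\kappa^{(k)}}(D)$ in $H^1(D)$ from Lemma~\ref{lem:test}, this is precisely the variational Robin problem for $\kappa^{(k)}$, so $u^{(k)}_{FK}$ is its unique weak solution.

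Two points remain. The decay $\widetilde{T}^{(k)}_t v\to 0$, and likewise for $\widetilde{T}_t$, rests on $e_g^{(k)}(t)\to 0$ $\mathbb{P}_x$-a.s., which I would obtain from $\int_0^\infty g(\X^{(k)}_s)\dd L^{(k)}_s=\infty$ a.s.: since $g\geq g_0$ on the electrodes and the corresponding boundary local time is a non-trivial positive continuous additive functional, the conservativeness and ergodicity of the reflecting diffusion---its transition density converges to the uniform equilibrium $\lvert D\rvert^{-1}$ by Lemma~\ref{lem:exponential}---force this functional to diverge through the ratio ergodic theorem for additive functionals, and dominated convergence then gives $\EW_x[e_g(t)v(\X_t)]\to 0$ for bounded $v$. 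To pass to the limit $k\to\infty$ I would run the two-sided argument of Theorem~\ref{thm:cont}: on the analytic side the Robin forms $\widetilde{\E}^{(k)}$ differ only through their interior part, so the $G$-convergence $\mathcal{L}^{(k)}\to\mathcal{L}$ together with the compensated compactness lemma \cite[Lemma~1.1]{Zikov1} identifies the weak $H^1$-limit of $u^{(k)}_{FK}$ with the weak solution $u$; on the probabilistic side the H\"older continuity up to the boundary of the kernels $\widetilde{p}^{(k)}$ and $\widetilde{p}$ and their locally uniform convergence, via \cite[Lemma~2.2]{RoSl00}, give $u^{(k)}_{FK}(x)\to u_{FK}(x)$ for every $x\in\overline{D}$. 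Matching the two limits proves $u=u_{FK}$, and the kernel continuity yields $u\in C(\overline{D})$.

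The main obstacle is the control of the multiplicative functional $e_g$. In contrast to the continuum model the boundary contribution can no longer be annihilated by the choice of test function, so the weight must be carried through the occupation formula and the weighted Revuz correspondence for $\widetilde{T}^{(k)}$ has to be verified; moreover, the limit $k\to\infty$ now demands convergence of the Feynman-Kac weights themselves, i.e. $\int_0^t g(\X^{(k)}_s)\dd L^{(k)}_s\to\int_0^t g(\X_s)\dd L_s$, which relies on the joint convergence of the processes \emph{together with} their boundary local times and not merely on convergence of the heat kernels. Establishing this convergence of the boundary functionals, together with the almost-sure divergence of $\int_0^\infty g(\X_s)\dd L_s$ underpinning the decay of $e_g$, are the two steps I expect to require the most care.
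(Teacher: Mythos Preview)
Your proposal is viable, but it takes a genuinely different route from the paper. You carry the regularization strategy of Theorem~\ref{thm:cont} over to the Robin problem: smooth the conductivity, apply It\^o's formula to $e_g^{(k)}(t)v(\X_t^{(k)})$ for $v\in V_{\kappa^{(k)}}(D)$, derive the weak Robin formulation for $u_{FK}^{(k)}$ by pairing with $f$ through the weighted occupation formula, and then pass to the limit $k\to\infty$ on both the analytic side ($G$-convergence) and the probabilistic side (convergence of the perturbed kernels $\widetilde p^{(k)}$). The paper instead avoids regularization entirely here. Because the gauge $\EW_x\int_0^\infty e_g(t)\dd L_t$ is finite (in contrast to the Neumann case), the paper can work directly with the non-smooth $\kappa$: it takes the known weak solution $u\in H^1(D)$, establishes via Fukushima decomposition for the \emph{perturbed} Dirichlet form $(\E^g,H^1(D))$ a semimartingale decomposition
\[
u(\X_t)=u(x)+\text{(martingale)}-\int_0^t f(\X_s)\dd L_s+\int_0^t g(\X_s)u(\X_s)\dd L_s
\]
(this is Lemma~\ref{lem:sem}, the technical core), multiplies by $e_g(t)$ and integrates by parts so that the $gu$-term cancels against $\dd e_g$, takes $\EW_x$, and lets $t\to\infty$. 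Continuity of $u_{FK}$ is obtained separately from the strong Feller property of $\{T_t^g\}$ (Proposition~\ref{thm:3} and Corollary~\ref{cor:cont}).

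What each approach buys: the paper's direct martingale argument is cleaner precisely because it sidesteps the two issues you correctly flag as delicate---the joint convergence of $(\X^{(k)},L^{(k)},e_g^{(k)})$ as $k\to\infty$ and the locally uniform convergence $\widetilde p^{(k)}\to\widetilde p$ of the \emph{perturbed} kernels (the reference \cite{RoSl00} treats the unperturbed ones, so you would still need to push the convergence through the iterative construction of $p^g$). Your approach, on the other hand, is more elementary in that it only ever invokes classical It\^o calculus and never the comparison of zero-energy parts $N^v$ versus $N^{g,v}$ that drives Lemma~\ref{lem:sem}; it also keeps the two models (continuum and complete electrode) on a completely parallel footing. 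Both arguments need the finiteness of the gauge and the decay $e_g(t)\to 0$, which you handle correctly via the support of $f$ and the ergodicity of the reflecting process.
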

Before we are ready to give a proof of Theorem \ref{thm:cem}, let us
introduce the \emph{Feynman-Kac semigroup} of the complete electrode
model, i.e., the one-parameter family of operators $\{T^g_t,t\geq 0\}$
defined by
\begin{equation} 
T^g_tv(x):=\EW_x e_g(t)v(\X_t),\quad x\in\overline{D}\text{ and } t\geq 0.
\end{equation}
Let us define the \emph{perturbed} Dirichlet form $(\E^g,\D(\E^g)))$ by a perturbation of $(\E,\D(\E))$ with the measure $g\cdot\sigma$, i.e.,
\begin{equation}
\E^g(v,w):=\E(v,w)+\left<gv,w\right>_{\partial D},\quad v,w\in\D(\E^g),
\end{equation}
where $\D(\E^g)=H^1(D)$ by the standard trace theorem.

\begin{proposition}\label{thm:3}
The Feynman-Kac semigroup $\{T^g_t,t\geq 0\}$ is a strong Feller semigroup on $L^2(D)$.
\end{proposition}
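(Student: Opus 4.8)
The plan is to reduce the strong Feller property of the perturbed semigroup to the H\"older regularity of the unperturbed transition kernel density $p$ (Proposition \ref{thm:1}) by means of a Duhamel, or Dynkin-type, identity. Writing $A_t:=\int_0^t g(\X_s)\dd L_s$, so that $e_g(t)=e^{-A_t}$, the elementary identity $e^{-A_t}=1-\int_0^t e^{-(A_t-A_s)}\dd A_s$ together with the Markov property of $\X$ at time $s$ and the occupation formula of Lemma \ref{lem:occ} should yield the representation
\begin{equation}\label{eqn:duhamel}
T^g_tv(x)=T_tv(x)-\int_0^t\int_{\partial D}p(s,x,y)\,g(y)\,(T^g_{t-s}v)(y)\dd\sigma(y)\dd s
\end{equation}
for every bounded Borel function $v$ on $\overline{D}$ and every $x\in\overline{D}$. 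First I would establish (\ref{eqn:duhamel}) rigorously; the only subtlety is a time-dependent version of the occupation formula, which follows from Lemma \ref{lem:occ} by conditioning on $\F_s$ (using $\EW_x[e^{-(A_t-A_s)}v(\X_t)\mid\F_s]=(T^g_{t-s}v)(\X_s)$) and then approximating the bounded integrand $s\mapsto g(\cdot)(T^g_{t-s}v)(\cdot)$ by step functions in $s$ with values in the bounded Borel functions on $\partial D$.

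Given (\ref{eqn:duhamel}), the strong Feller property, i.e.\ that $T^g_t$ maps bounded Borel functions into $C(\overline{D})$, follows from a continuity argument. Since $0\leq e_g\leq 1$ we have the uniform bound $\lVert T^g_{t-s}v\rVert_\infty\leq\lVert v\rVert_\infty$, so the boundary integrand in (\ref{eqn:duhamel}) is dominated by $\lVert g\rVert_\infty\lVert v\rVert_\infty\,p(s,x,y)$. The first term $T_tv(x)=\int_Dp(t,x,y)v(y)\dd y$ is continuous in $x$ by the H\"older continuity of $p(t,\cdot,\cdot)$ and dominated convergence. For the boundary term I would split $\int_0^t=\int_0^\delta+\int_\delta^t$: on $[\delta,t]$ the kernel $p(s,\cdot,y)$ is continuous and bounded uniformly in $(s,y)\in[\delta,t]\times\partial D$ by Proposition \ref{thm:1}, so this contribution is continuous in $x$ by dominated convergence against the finite measure $\dd\sigma\dd s$; the $(0,\delta)$-contribution is bounded in supremum norm by $\lVert g\rVert_\infty\lVert v\rVert_\infty\sup_{x\in\overline{D}}\EW_xL_\delta$. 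Thus $T^g_tv$ will be exhibited as a uniform limit of continuous functions as $\delta\to0$, hence continuous.

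The main obstacle is the uniform-in-$x$ smallness $\sup_{x\in\overline{D}}\EW_xL_\delta\to0$ as $\delta\to0$, since the pointwise Aronson bound (\ref{eqn:Gaussian}) only gives the non-integrable estimate $\int_{\partial D}p(s,x,y)\dd\sigma(y)\lesssim s^{-d/2}$ near $s=0$, which is insufficient for a naive dominating function. I would circumvent this by working with the $1$-potential $U^1\sigma(x):=\EW_x\int_0^\infty e^{-s}\dd L_s$, which is continuous and bounded on $\overline{D}$ by the elliptic regularity argument recorded at the end of Section \ref{section3}. The Markov property gives $\EW_x\int_0^\delta e^{-s}\dd L_s=U^1\sigma(x)-e^{-\delta}T_\delta U^1\sigma(x)$; since $U^1\sigma$ is continuous on the compact set $\overline{D}$, hence uniformly continuous, while $T_\delta$ has the continuous kernel $p$, one obtains $T_\delta U^1\sigma\to U^1\sigma$ uniformly as $\delta\to0$. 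Combined with $\EW_xL_\delta\leq e^\delta\EW_x\int_0^\delta e^{-s}\dd L_s$, this delivers the required uniform smallness.

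Finally, I would record the remaining semigroup properties. The multiplicativity $e_g(t+s)=e_g(t)\,\bigl(e_g(s)\circ\theta_t\bigr)$ of the functional, where $\theta_t$ denotes the shift operator on $\Omega$, together with the Markov property gives $T^g_{t+s}=T^g_tT^g_s$, so that $\{T^g_t,t\geq0\}$ is genuinely a semigroup; the domination $0\leq e_g\leq 1$ shows that each $T^g_t$ is sub-Markovian and therefore a contraction on $L^2(D)$, while strong continuity on $L^2(D)$ follows by comparison with the strongly continuous semigroup $\{T_t,t\geq0\}$ and dominated convergence.
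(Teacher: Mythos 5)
Your proof is correct, but it is organized differently from the paper's. The paper follows Hsu and Papanicolaou: it expands the perturbed kernel into the full perturbation series $p^g=\sum_k p^g_k$ obtained by iterating your Duhamel identity, proves continuity of each term by induction (using the same small-time/large-time splitting of the $\dd s$-integral that you use), and needs $\sup_{x}\EW_x\int_0^tg(\X_s)\dd L_s$ to be small for small $t$ so that the series converges, after which the result is extended to arbitrary $t$ via Chapman--Kolmogorov. You stop after a single Duhamel iteration; since the unknown $T^g_{t-s}v$ enters only through the a priori bound $\lVert T^g_{t-s}v\rVert_\infty\leq\lVert v\rVert_\infty$, you avoid the convergence-radius issue and the extension step entirely, which is a genuine simplification. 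Both arguments ultimately rest on the same three ingredients \textemdash\ H\"older continuity of $p$ (Proposition \ref{thm:1}), the occupation formula (Lemma \ref{lem:occ}), and the uniform vanishing of $\sup_x\EW_xL_\delta$ \textemdash\ and your detour through the continuous $1$-potential $U^1\sigma$ gives a cleaner justification of the last point than the paper's bare appeal to Lemma \ref{lem:occ}. Two small remarks: your claim that the Aronson bound yields only the non-integrable estimate $s^{-d/2}$ is overly pessimistic, since integrating the Gaussian factor over the $(d-1)$-dimensional Lipschitz boundary gives $\int_{\partial D}p(s,x,y)\dd\sigma(y)\leq cs^{-1/2}$ uniformly in $x$, which already suffices; and the uniform convergence $T_\delta U^1\sigma\to U^1\sigma$ requires not just continuity of the kernel but a uniform-in-$x$ control of the displacement $\lvert\X_\delta-x\rvert$, which again comes from the Gaussian upper bound (\ref{eqn:Gaussian}). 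Neither point affects the validity of your argument.
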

\begin{proof}
First, it follows from a straightforward computation that $\{T^g_t,t\geq 0\}$  is associated with the 
Dirichlet form $(\E^g,\D(\E^g))$ and is therefore a strongly continuous sub-Markovian contraction semigroup on
$L^2(D)$. 
$T^g_t$ is a bounded operator from $L^1(D)$ to $L^{\infty}(D)$ for every $t>0$, which can be
shown using Fatou's lemma. By the Dunford-Pettis theorem, $T^g$ can thus
be represented as an integral operator for every $t>0$,
\begin{equation}
T^g_t\phi(x)=\int_Dp^g(t,x,y)\phi(y)\dd  y,\text{
for every }\phi\in L^1(D), 
\end{equation}
where for all $t>0$ we have $p^g(t,x,y)\in L^{\infty}(D\times D)$ and
$p^g(t,x,y)\geq 0$ for a.e. $x, y \in \overline{D}$.  In order to prove the strong Feller
property, we show
that $T^g_t$, $t>0$ maps bounded Borel functions to
$C(\overline{D})$. As in the papers \cite{Hsu3,Papanicolaou}, we use an iterative method
 to construct the transition kernel density
$p^g$: Let $p^g_0(t,x,y):=p(t,x,y)$ and set 
\begin{equation*}
p_k^g(t,x,y):=-\int_0^t\int_{\partial
D}p(s,x,z)g(z)p_{k-1}^g(t-s,z,y)\dd \sigma(z)\dd  s,\quad k\in\N.
\end{equation*}
Note that the terms $p_k^g$ are symmetric in the $x$ and
$y$ variables by the symmetry of $p$. By induction using Lemma
\ref{lem:occ} it is not difficult to verify that for all $k\in\N$
\begin{equation*}
\int_0^t\int_{\partial D}g(x)p_k^g(s,x,y)\dd \sigma(x)\dd  s\leq
\Big(\sup_{x\in\overline{D}}\Big\{\EW_x\int_0^tg(\X_s)\dd 
L_s\Big\}\Big)^{k+1}
\end{equation*}
and that there is a positive constant $c_1$ such that 
\begin{equation}\label{eqn:densest}
\lvert p_k^g(t,x,y)\rvert\leq c_1^{k+1}t^{-d/2}\Big(\sup_{x\in\overline{D}}
\Big\{\EW_x\int_0^tg(\X_s)\dd  L_s\Big\}\Big)^{k+1}\quad
\text{for all }k\in\N.
\end{equation}
Let us show the continuity of $p^g_k$, $k\in\N\cup\{0\}$. For $k=0$, this
is a consequence of Theorem \ref{thm:1}. Now assume that $p^g_{k-1}$ is
continuous on $(t_0,T]\times\overline{D}\times\overline{D}$ for $t_0>0$,
then we have for $t\in (t_0,T]$
\begin{equation*}\begin{split}
p_k^g(t,x,y)=-&\int_0^{t_0}\int_{\partial D}p(s,x,z)g(z)
p_{k-1}^g(t-s,z,y)\dd \sigma(z)\dd  s\\
&-\int_{t_0}^{t}\int_{\partial
D}p(s,x,z)g(z)p_{k-1}^g(t-s,z,y)\dd \sigma(z)\dd  s.
\end{split}
\end{equation*}
Note that the first integral on the right-hand side tends to zero
uniformly as $t_0\rightarrow 0$, which is a consequence of
(\ref{eqn:densest}), while the second integral is continuous by
assumption. Hence, there exists a $T>0$ such that the series
$p^g(t,x,y):=\sum_{k=0}^{\infty}p_k^g(t,x,y)$ converges absolutely and
uniformly on any compact subset of
$(0,T]\times\overline{D}\times\overline{D}$ and is thus continuous on
$(0,T]\times\overline{D}\times\overline{D}$. By the Markov property we
have for all $t\in (0,T]$ and every $x\in\overline{D}$ the following 
expression for $T^g_t\phi(x)$
\begin{equation*}
\int_Dp^g(t,x,y)\phi(y)\dd  y=\EW_x\phi(\X_t)+\sum_{k=1}^{\infty}
\frac{1}{k!}\EW_x\Big\{\Big(-\int_0^tg(\X_s)\dd 
L_s\Big)^k\phi(\X_t)\Big\}.
\end{equation*}
Therefore, the assertion for arbitrary $T>0$ follows from the Chapman-Kolmo-gorov
equation.
\end{proof}
\begin{corollary}\label{cor:cont}
Let $u$ be defined by the Feynman-Kac formula (\ref{eqn:FK}), then $u\in C(\overline{D})$.
\end{corollary}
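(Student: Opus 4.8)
The plan is to rewrite the path integral defining $u$ as an integral against the transition kernel density $p^g$ built in Proposition~\ref{thm:3}, and then to read off continuity from the joint continuity of $p^g$ together with decay estimates at the two ends of the time integral. The key bridge is a \emph{weighted occupation formula} extending Lemma~\ref{lem:occ} to the multiplicative functional $e_g$.

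First I would prove that
\[
\EW_x\int_0^t e_g(s)f(\X_s)\dd L_s=\int_0^t\int_{\partial D}p^g(s,x,y)f(y)\dd\sigma(y)\dd s,\qquad x\in\overline{D},\ t\geq 0.
\]
To this end I would expand $e_g(s)=\sum_{k\geq 0}\tfrac{(-1)^k}{k!}\bigl(\int_0^s g(\X_r)\dd L_r\bigr)^k$ and treat each term by Lemma~\ref{lem:occ} together with the Markov property, exactly as in the construction $p^g=\sum_k p^g_k$ in the proof of Proposition~\ref{thm:3}; the estimate (\ref{eqn:densest}) then justifies interchanging the summation with expectation and integration. In particular, letting $t\to\infty$ identifies $u$ with $\int_0^\infty\int_{\partial D}p^g(s,x,y)f(y)\dd\sigma(y)\dd s$.

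Next I would control the two ends of the $s$-integral uniformly in $x$. Since $e_g\leq 1$ we have $p^g\leq p$, so Lemma~\ref{lem:occ} and the Aronson bound (\ref{eqn:Gaussian}) give $\sup_{x\in\overline{D}}\int_0^{s_0}\int_{\partial D}p^g(s,x,y)\dd\sigma(y)\dd s\leq c\sqrt{s_0}$, because the Gaussian integrates to order $s^{(d-1)/2}$ over the $(d-1)$-dimensional boundary, leaving an integrable $s^{-1/2}$ singularity. For the tail I would show that the perturbed form $(\E^g,H^1(D))$ is coercive: if $\E^g(v,v)=\E(v,v)+\left<gv,v\right>_{\partial D}$ were not bounded below by a positive multiple of $\lvert\lvert v\rvert\rvert_2^2$, a compactness argument (Rellich, with $\nabla v_n\to 0$ in $L^2$) would force a nonzero constant limit $v$ with $\left<gv,v\right>_{\partial D}=0$, contradicting $g>0$ on the electrodes. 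Hence the bottom eigenvalue $\lambda_0>0$, and combining $\lvert\lvert T^g_t\rvert\rvert_{L^2\to L^2}\leq e^{-\lambda_0 t}$ with the $L^1$–$L^\infty$ smoothing yields $\sup_{x,y}p^g(s,x,y)\leq c\,e^{-\lambda_0 s}$ for large $s$, as in Lemma~\ref{lem:exponential}. This makes the gauge $\EW_x\int_0^\infty e_g(t)\dd L_t$ finite, so that $u$ is well defined, and forces the tail integrals $\int_t^\infty(\cdots)\dd s$ to vanish uniformly in $x$ as $t\to\infty$.

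Finally, for fixed $0<s_0<T_0<\infty$ the joint continuity of $p^g$ on $[s_0,T_0]\times\overline{D}\times\overline{D}$ from Proposition~\ref{thm:3} makes $x\mapsto\int_{s_0}^{T_0}\int_{\partial D}p^g(s,x,y)f(y)\dd\sigma(y)\dd s$ continuous on $\overline{D}$. Since the small-$s$ and large-$s$ contributions vanish uniformly in $x$ as $s_0\to 0$ and $T_0\to\infty$, the function $u$ is a uniform limit of continuous functions on the compact set $\overline{D}$ and is therefore itself in $C(\overline{D})$. The main obstacle I anticipate is the rigorous justification of the weighted occupation formula and the accompanying interchange of limits; the decay and continuity estimates then proceed along the lines already established for the continuum model.
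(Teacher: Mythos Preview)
Your approach is correct but takes a genuinely different route from the paper. You expand $u(x)$ directly as $\int_0^{\infty}\int_{\partial D}p^g(s,x,y)f(y)\dd\sigma(y)\dd s$ via a weighted occupation formula, and then control the small-$s$ end by $p^g\leq p$ and Aronson, and the large-$s$ end by proving a spectral gap for $(\E^g,H^1(D))$. The paper instead uses a martingale/Markov identity to write
\[
u(x)=T^g_t u(x)+\EW_x\int_0^t e_g(r)f(\X_r)\dd L_r
\]
for every fixed $t>0$. Since $T^g_t$ is strong Feller (Proposition~\ref{thm:3}), the first term is continuous for any $t>0$; the second term is bounded by $c\sup_{x}\EW_x L_t$, which tends to $0$ uniformly as $t\to 0$ by Lemma~\ref{lem:occ}. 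So only a single limit $t\to 0$ is needed, and neither the weighted occupation formula nor the large-time spectral gap is required. What your approach buys is an explicit kernel representation of $u$ and an independent proof of the exponential decay of $p^g$, both potentially useful elsewhere; what the paper's approach buys is economy---it sidesteps precisely the step you flag as the main obstacle (justifying the weighted occupation formula and the interchange of limits).
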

\begin{proof}
Let us define a $\mathbb{P}_x$-martingale by 
\begin{equation*}
\EW_x\Big\{\int_0^{\infty}e_g(s)f(\X_s)\dd  L_s\vert\F_t\Big\}
=\int_0^te_g(s)f(\X_s)\dd  L_s+e_g(t)u(\X_t),
\end{equation*}
where the right-hand side is obtained using the Markov property of $\X$
together with the fact that $e_g$ is a multiplicative functional of
$\X$. Obviously,
\begin{equation*}
e_g(t)u(\X_t)-u(x)+\int_0^te_g(s)f(\X_s)\dd  L_s
\end{equation*} 
is a $\mathbb{P}_x$-martingale as well, and hence we
have for all $0\leq s\leq t$
\begin{equation*}
e_g(s)u(\X_s)=e_g(s)\EW_{\X_s}e_g(t-s)u(\X_{t-s})+e_g(s)
\EW_{\X_s}\int_0^{t-s}e_g(r)f(\X_r)\dd  L_r.
\end{equation*}
Setting $s=0$ thus yields  
\begin{equation}\label{eqn:marteq}
u(x)=T^g_t u(x)+\EW_x\int_0^t e_g(r)f(\X_r)\dd  L_r
\quad\text{for all }t\geq 0. 
\end{equation}
$T^g_tu$ is continuous on $\overline{D}$ by Proposition \ref{thm:3}. To
prove that $u$ is continuous on $\overline{D}$, it is sufficient to show
that the second term on the right-hand side of (\ref{eqn:marteq}) tends to zero uniformly in
$x$, as $t\rightarrow 0$. This is, however, clear since we may estimate
\begin{equation*}
\sup_{x\in\overline{D}}\Big\{\EW_x\int_0^te_g(s)f(\X_s) \dd 
L_s\Big\}\leq
z^{-1}\max_{l=1,...,N}\{U_l\}\sup_{x\in\overline{D}}\{\EW_xL_t\},
\end{equation*}
where the right-hand side tends to zero as $t\rightarrow 0$ by Lemma
\ref{lem:occ}.
\end{proof}

The following lemma yields a semimartingale decomposition for the composite process $u(\X_t)$ which compensates for the lack of It\^{o}'s formula in the proof of Theorem \ref{thm:cem}.
\begin{lemma}\label{lem:sem}
Let $u\in H^1(D)$ denote the weak solution of the boundary value problem (\ref{eqn:con}), (\ref{eqn:cem}). Then for all $t\geq 0$
\begin{equation}
u(\X_t)=u(x)+\int_0^t\nabla u(\X_s)\dd M^u_s-\int_0^tf(\X_s)\dd L_s+\int_0^tg(\X_s) u(\X_s)\dd L_s,
\end{equation}
$\mathbb{P}_x\text{-a.s.}$ for q.e. $x\in\overline{D}$.
\end{lemma}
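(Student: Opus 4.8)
The plan is to recognize the claimed identity as nothing but the Fukushima decomposition (\ref{eqn:Fukudecomp}) of the function $u$ itself, once its two constituents have been identified explicitly. By Corollary \ref{cor:cont} we have $u\in C(\overline{D})$, and $u\in H^1(D)=\D(\E)$ by hypothesis; since Proposition \ref{thm:1} refines $\X$ to start from every point of $\overline{D}$ and permits the martingale part $M^u$ and the zero-energy part $N^u$ to be taken as additive functionals in the strict sense, the decomposition
\begin{equation*}
u(\X_t)=u(x)+M^u_t+N^u_t,\qquad t\geq 0,
\end{equation*}
holds $\mathbb{P}_x$-a.s.\ for q.e.\ $x\in\overline{D}$. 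It then remains to compute the two additive functionals.

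First I would identify the martingale part exactly as in the proof of Proposition \ref{thm:Sk}. The energy measure of $M^u$ is $\dd\mu_{\left<M^u\right>}(x)=2\,\kappa\nabla u\cdot\nabla u\,\dd x$, so that the predictable quadratic variation is $\left<M^u\right>_t=2\int_0^t\kappa\nabla u\cdot\nabla u(\X_s)\dd s$; combined with the covariation $\left<M^{\phi_i},M^{\phi_j}\right>_t=2\int_0^t\kappa_{ij}(\X_s)\dd s$ of the coordinate martingales, this yields precisely the stochastic-integral term $\int_0^t\nabla u(\X_s)\dd M^u_s$ appearing in the statement.

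The crux is to identify the zero-energy part $N^u$ with the bounded-variation process $-\int_0^t f(\X_s)\dd L_s+\int_0^t g(\X_s)u(\X_s)\dd L_s$. The weak formulation of the complete electrode model gives, for every $w\in H^1(D)$,
\begin{equation*}
\E(u,w)=\left<f,w\right>_{\partial D}-\left<gu,w\right>_{\partial D}=\int_{\overline{D}}w\dd\nu,\qquad \nu:=(f-gu)\,\sigma\vert_{\partial D}.
\end{equation*}
Because $u\in C(\overline{D})$ and $g$ is bounded (as $z\geq c_0>0$), the density $f-gu$ is a bounded Borel function on $\partial D$, and since $\sigma$ was shown in Section \ref{section3} to be a smooth measure of finite energy, $\nu$ is likewise a signed smooth measure of finite energy integral. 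I would then invoke the identification theorem for the zero-energy part of the Fukushima decomposition, cf.~\cite[Theorem 5.5.4]{Fukushimaetal}, to conclude that $N^u$ is of bounded variation with $N^u_t=-A^{\nu}_t$, where $A^{\nu}$ is the continuous additive functional whose Revuz measure is $\nu$. Since the boundary local time $L$ is the positive continuous additive functional associated with $\sigma\vert_{\partial D}$ via (\ref{eqn:Revuz}) (cf.\ also Lemma \ref{lem:occ}), the Revuz correspondence gives $A^{\nu}_t=\int_0^t(f-gu)(\X_s)\dd L_s$, whence $N^u_t=-\int_0^t f(\X_s)\dd L_s+\int_0^t g(\X_s)u(\X_s)\dd L_s$. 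Substituting both functionals into the decomposition above yields the assertion.

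The main obstacle is this last identification: a priori $N^u$ is only a continuous additive functional of \emph{zero energy}, not of bounded variation, and the entire content of the lemma is that the elliptic equation forces $N^u$ to be an honest local-time integral. Making this rigorous requires verifying that the energy functional $\E(u,\cdot)$ is represented by the signed smooth measure $\nu=(f-gu)\sigma$ of finite energy integral — which is where the continuity of $u$ from Corollary \ref{cor:cont}, the boundedness of $f$ and $g$, and the finite-energy property of $\sigma$ all enter — so that the characterization of the bounded-variation zero-energy functionals applies and the Revuz correspondence can be used to express $A^{\nu}$ through the boundary local time $L$.
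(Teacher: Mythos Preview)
Your approach is correct and more direct than the paper's. You read off $N^u$ straight from the weak formulation $\E(u,w)=\int_{\partial D}(f-gu)\,w\,\dd\sigma$ together with the identification theorem for bounded-variation zero-energy parts in \cite{Fukushimaetal}. The paper instead detours through the \emph{perturbed} Dirichlet form $(\E^g,H^1(D))$ and its killed process $X^g$: it first establishes the relation $N^v_t=N^{g,v}_t+\int_0^t g(\X_s)v(\X_s)\,\dd L_s$ for $v$ in the range of the resolvent $G_1^g$ via a $1$-potential identity (using Corollary~\ref{cor:continuum}), extends this to general $v\in H^1(D)$ by approximation, and only then reads off $N^{g,u}_t=-\int_0^t f(\X_s)\,\dd L_s$ from $\E^g(u,\cdot)=\langle f,\cdot\rangle_{\partial D}$. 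Your route avoids the resolvent machinery entirely; the paper's route has the marginal advantage that the measure representing $\E^g(u,\cdot)$ is just $f\sigma$ rather than $(f-gu)\sigma$, so no information about $u\vert_{\partial D}$ is needed at that step.

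One point to repair: invoking Corollary~\ref{cor:cont} for $u\in C(\overline{D})$ is circular. That corollary asserts continuity of the function \emph{defined by the Feynman-Kac formula}~(\ref{eqn:FK}), not of the weak solution $u$; the two are identified only in Theorem~\ref{thm:cem}, whose proof relies on the present lemma. Fortunately your argument does not actually need continuity. Since $u\in H^1(D)$, the trace theorem gives $u\vert_{\partial D}\in H^{1/2}(\partial D)\subset L^2(\partial D)$, and for any $h\in L^2(\partial D)$ the measure $h\sigma$ is smooth of finite energy integral (because $\lvert\int_{\partial D}\tilde v\,h\,\dd\sigma\rvert\le c\lVert h\rVert_{L^2(\partial D)}\lVert v\rVert_{\E_1}$ and $h\sigma\ll\sigma$). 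Hence $\nu=(f-gu)\sigma$ qualifies and the Revuz correspondence yields $A^{\nu}_t=\int_0^t(f-gu)(\X_s)\,\dd L_s$ exactly as you wrote. Alternatively, you may invoke elliptic regularity directly (cf.\ \cite{Griepentrog}, as the paper does elsewhere) to obtain $u\in C(\overline{D})$ without passing through the Feynman-Kac representation.
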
 
\begin{proof}
Applying the Fukushima decomposition (\ref{eqn:Fukudecomp}) to the perturbed Dirichlet form $(\E^g,H^1(D))$, we obtain the unique decomposition
\begin{equation*}
v(\X^g_t)-v(\X_0^g)=M_t^{g,v}+N_t^{g,v},\quad\text{for all }t>0,\quad\mathbb{P}_x\text{-a.s. for q.e. }x\in\overline{D}
\end{equation*}
into a martingale additive functional of finite energy and a continuous additive functional of zero energy of the non-conservative Hunt process $\X^g$ associated with $(\E^g,H^1(D))$. We study the relation between the continuous additive functionals $N^v$ and $N^{g,v}$. Let us assume first that $v$ is in the range of the $1$-resolvent associated with the perturbed Dirichlet form, i.e., 
$$v(x)=G_1^g\phi(x)=\EW_x\Big\{\int_0^{\infty}e^{-t-\int_0^tg(\X_s)\dd L_s}\phi(\X_t)\dd t\Big\},\quad x\in\overline{D}$$ 
for some $\phi\in L^2(D)$. Then we have by the resolvent property, $-\mathcal{L}G_1^g\phi=\phi-v$ so that for all $w\in H^1(D)$
\begin{equation*}
\E^g(G_1^g\phi,w)=\int_D(\phi-v)w\dd x.
\end{equation*}
Using the Revuz correspondence, we see that $N^{g,v}$ admits a semimartingale decomposition, namely $$N^{g,v}_t=\int_0^t(\phi(\X^g_s)-v(\X^g_s))\dd s.$$ 
Moreover, an easy computation yields that
\begin{equation}\label{eqn:alphapot}
G_{1}\phi-G_{1}^g\phi=\EW_x\Big\{\int_0^{\infty}e^{- t}g(\X_t)G_{1}^g\phi(\X_t)\dd L_t \Big\}\quad\text{for all }\alpha>0.
\end{equation}
By Corollary \ref{cor:continuum}, the right-hand side in (\ref{eqn:alphapot}) is the unique weak solution of the elliptic boundary value problem
\begin{equation*}
\begin{cases}
\nabla\cdot(\kappa\nabla w)- w=0&\quad\text{in }D\\
\partial_{\kappa\nu}w=gG_{1}^g\phi&\quad\text{on }\partial D.
\end{cases}
\end{equation*}
That is, it coincides with the $1$-potential $U_{1}(G_{1}^g\phi(g\cdot\sigma))\in H^1(D)$. 
Invoking the Revuz correspondence once more, we see that the zero energy continuous additive functional in the Fukushima decomposition of the $1$-potential corresponding to the signed Radon measure $vg\cdot\sigma$ is given by 
\begin{equation*}
N_t^{U_{1}(vg\cdot\sigma)}=\int_0^t{U_1(vg\cdot\sigma)}(\X_s)\dd s -\int_{0}^{t}v(\X_s)g(\X_s)\dd L_s,\quad\text{for all }t\geq 0.
\end{equation*}
Therefore, we obtain
\begin{eqnarray*}
N_t^v&=&N_t^{G_1\phi}-N_t^{U_1(vg\cdot\sigma)}\\
&=&\int_0^t(\phi(\X_s)-G_1\phi(\X_s))\dd s -\int_0^tU_1(vg\cdot\sigma)(\X_s)\dd s+\int_{0}^{t}v(\X_s)g(\X_s)\dd L_s\\
&=&\int_0^t(\phi(\X_s)-v(\X_s))\dd s+\int_0^tv(\X_s)g(\X_s)\dd L_s.
\end{eqnarray*}
Moreover, notice that $\X^g$ is related to $\X$ by a random time change, namely
\begin{equation*}
\X_s^g=\begin{cases}
\X_s,\quad &s<\zeta^g\\
\partial,\quad &s\geq\zeta^g,
\end{cases}
\end{equation*}
where the lifetime $\zeta^g$ is given by 
$$\zeta^g:=\inf\Big\{t:\int_0^tg(\X_s)\dd L_s>Z\Big\}$$ and $Z$ is an exponentially distributed random variable with parameter $1$.
Hence, we obtain
\begin{equation}\label{eqn:zero}
N_t^v=N_t^{g,v}+\int_0^tv(\X_s^g)g(\X_s^g)\dd L_s\quad\text{for all }t<\zeta^g.
\end{equation}
This equality may be generalized to the case of an arbitrary $v\in H^1(D)$ not necessarily in the range of the resolvent using an approximation argument. Namely, we consider the sequence $(v_k)_{k\in\N}$ with $v_k:=kG^g_{k+1}v=G^k_1\phi_k$, $\phi_k:=k(v-kG^g_{k+1}v)$. Then $v_k\in H^1(D)$ for all $k\in\N$ and the sequence $(v_k)_{k\in\N}$ satisfies both,
\begin{equation*} 
\lim_{k\rightarrow\infty}\E^g(v_k-v,v_k-v)=0\quad\text{ and }\quad\lim_{k\rightarrow\infty}\E(v_k-v,v_k-v)=0
\end{equation*}
so that by \cite[Corollary 5.2.1]{Fukushimaetal}, there exists a subsequence, for convenience still denoted $(v_k)_{k\in\N}$, such that $v_k(\X_t^g)\rightarrow v(\X_t^g)$, $N_t^{g,v_k}\rightarrow N_t^{g,v}$ and $N_t^{v_k}\rightarrow N_t^v$ uniformly on each finite time interval, $\mathbb{P}_x$-a.s. for q.e. $x\in\overline{D}$. In particular, it follows that (\ref{eqn:zero}) holds for arbitrary $v\in H^1(D)$.

As $u$ solves the boundary value problem (\ref{eqn:con}), (\ref{eqn:cem}) we have that 
\begin{equation*}
\E^g(u,v)=\left<f,v\right>_{\partial D}\quad\text{for all }v\in H^1(D)\cap C(\overline{D})
\end{equation*}
Since the perturbed Dirichlet form $(\E^g,H^1(D))$ is regular, 
we obtain from the Revuz correspondence the representation
\begin{equation*}
N^{g,u}_t=-\int_0^{t}f(\X^g_s)\dd L_s,\quad\mathbb{P}_x\text{-a.s. for q.e. }x\in\overline{D}.
\end{equation*} 
Finally, using the representation 
 \begin{equation*}
 M_t^u=\int_0^t\nabla u(\X_s)\dd M_s^u,\quad\mathbb{P}_x\text{-a.s. for q.e. }x\in\overline{D}
 \end{equation*}
 for the martingale additive functional, the claim follows from the Fukushima decomposition (\ref{eqn:Fukudecomp}), (\ref{eqn:zero}) and the Markov property of $\X$.\end{proof}

\begin{proof}[Proof of Theorem \ref{thm:cem}]
There exists a weak solution $u\in H^1(D)$ of the boundary
value problem (\ref{eqn:con}), (\ref{eqn:cem}) so that with regard to Corollary \ref{cor:cont}, it remains to show that this weak solution $u$ admits the Feynman-Kac representation (\ref{eqn:FK}). Note
first that the gauge function 
\begin{equation}\label{eqn:gauge}
\EW_x\int_0^{\infty}e_g(t)\dd  L_t
\end{equation}
is finite $\mathbb{P}_x$-a.s. for every $x\in\overline{D}$, hence the
expression on the right-hand side of (\ref{eqn:FK}) is well-defined.
Lemma \ref{lem:sem} yields the semimartingale decomposition
\begin{equation*}
u(\X_t)=u(x)+\int_0^t\nabla u(\X_s)\dd 
M_s^u-\int_0^t f(\X_s)\dd  L_s+\int_0^t g(\X_s) u(\X_s)\dd 
L_s,
\end{equation*}
$\mathbb{P}_x$-a.s. for q.e. $x\in\overline{D}$. Note that the second term on the right-hand side is a local $\mathbb{P}_x$-martingale and that $e_g$ is continuous, adapted
to $\{\F_t,t\geq 0\}$ and of bounded variation. Multiplication by such functions leaves
the class of local martingales invariant.
Using integration by parts we thus obtain for q.e.
$x\in\overline{D}$ the identity
\begin{equation*}
u(\X_t)e_g(t)=u(x)+\int_0^te_g(s)\nabla
u(\X_s)\dd  M_s^u-\int_0^t e_g(s)f(\X_s)\dd  L_s,
\end{equation*}
$\mathbb{P}_x$-a.s., where the second summand on the right-hand side is a local $\mathbb{P}_x$-martingale.
That is, there exists an increasing sequence $(\tau_k)_{k\in\N}$ of
stopping times which tend to infinity such that for every $k\in\N$ 
$$\mathcal{M}_{t\wedge\tau_k}:=\int_0^{t\wedge\tau_k}e_g(s)\nabla u(\X_s)\dd  M_s^u$$
is a $\mathbb{P}_x$-martingale. By definition of the term $e_g$, it is, however, clear that 
\begin{equation*}
\EW_x\sup_{k\in\N}\lvert \mathcal{M}_{t\wedge\tau_k}\rvert<\infty\quad\text{for all }t\geq 0\text{ and every }x\in\overline{D}
\end{equation*} 
which is sufficient for $\{\mathcal{M}_t,t\geq 0\}$ to be a $\mathbb{P}_x$-martingale by the dominated convergence theorem.
Hence,
\begin{equation*}
u(x)=\EW_x\int_0^te_g(s)f(\X_s)\dd  L_s+
\EW_x u(\X_t)e_g(t)\quad\text{for q.e.  }x\in\overline{D}.
\end{equation*}
Letting $t\rightarrow\infty$ finally yields 
\begin{equation*}
u(x)=\EW_x\int_0^{\infty}e_g(t)f(\X_t)\dd 
L_t\quad\text{for q.e. }x\in\overline{D},
\end{equation*}
where we have used the fact that $u$ is essentially bounded by standard elliptic regularity theory. Finally, by the fact that we have actually
shown in Corollary \ref{cor:cont} that the right-hand side in the last equality is continuous up to
the boundary, the assertion holds for every
$x\in\overline{D}$.
\end{proof}
\begin{remark}
Note that the technique we used to prove Theorem \ref{thm:cem} fails for
the Neumann problem corresponding to the continuum model. This comes
from the fact that in this case the gauge function (\ref{eqn:gauge}) becomes infinite.
For the same reason Theorem 1.2 from \cite{ChenZhang}, specialized to a
zero lower-order term, does not yield the desired Feynman-Kac
formula for the continuum model either.
\end{remark}

\subsection{Mixed boundary value problems}
Now we can directly deduce the desired Feynman-Kac formula for the mixed boundary value problem corresponding to the stochastic anomaly detection problem introduced in Section \ref{sec:stochan} of the previous chapter. Recall that in this setting $\partial D$ consists of two disjoint parts $\partial_1 D$ and $\partial_2 D$ and that measurements can be taken only on the \emph{accessible boundary} $\partial_1 D$ while the electric potential vanishes on the \emph{inaccessible boundary}Ê$\partial_2 D$. The deterministic EIT forward problem for the complete electrode model is then given by the conductivity equation (\ref{eqn:con}) subject to the mixed boundary conditions
\begin{equation}\label{eqn:cem3}
\begin{aligned}
&\kappa\nu\cdot\nabla u\vert_{\partial D}+g u\vert_{\partial D}=f\quad&\text{on }\partial_1 D\phantom{.}\\
&u\vert_{\partial D}=0\quad&\text{on }\partial_2 D.
\end{aligned}
\end{equation}  
The following result is a corollary to the line of arguments that led to the proof of Theorem \ref{thm:cem} rather than to its actual statement.  
\begin{corollary}\label{cor:cem}
For given functions $f,g$ defined by (\ref{eqn:functions}) and a voltage
pattern $U\in\R^N$ satisfying (\ref{eqn:ground}), there is a unique
weak solution $u\in C(\overline{D})\cap H^1(D)$ to the boundary value problem
(\ref{eqn:con}), (\ref{eqn:cem3}). This solution admits the Feynman-Kac representation 
\begin{equation}\label{eqn:FK_mixed}
u(x)=\EW_{x}\int_0^{\tau}e_g(t)f(\X_t)\dd 
L_t\quad\text{for all }x\in\overline{D},
\end{equation}
where $\tau:=\inf\{t\geq 0:\X_t\in\partial_2 D\}$ denotes the first hitting time of $\partial_2 D$.
\end{corollary}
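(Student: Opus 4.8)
The plan is to follow the proof of Theorem~\ref{thm:cem} almost verbatim, the only structural change being that the reflecting diffusion $\X$ is killed upon first hitting $\partial_2 D$; the Dirichlet condition on the inaccessible boundary is then encoded probabilistically through the stopping time $\tau$. The natural weak formulation is to find $u\in H^1_0(D\cup\partial_1 D)$ with
\[
\int_D\kappa\nabla u\cdot\nabla v\dd x+\left<gu,v\right>_{\partial_1 D}=\left<f,v\right>_{\partial_1 D}\qquad\text{for all }v\in H^1_0(D\cup\partial_1 D).
\]
Coercivity on $H^1_0(D\cup\partial_1 D)$ is a consequence of the uniform ellipticity (\ref{eqn:ellipticity}) together with the Poincar\'e inequality available because the test functions vanish on $\partial_2 D$, a portion of $\partial D$ of positive surface measure; hence Lax--Milgram yields a unique weak solution, which is essentially bounded by standard elliptic regularity theory.

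On the probabilistic side I would identify the process obtained by killing $\X$ at $\tau$ with the Hunt process associated with the part of the perturbed Dirichlet form $(\E^g,H^1(D))$ on the relatively open set $D\cup\partial_1 D$, whose domain is precisely $H^1_0(D\cup\partial_1 D)$. Since $e_g(t)\leq 1$, the iterative kernel estimates underlying Proposition~\ref{thm:3} transfer unchanged, so the Feynman-Kac semigroup of the killed process is again strong Feller, which gives continuity on $D\cup\partial_1 D$ of the right-hand side of (\ref{eqn:FK_mixed}) exactly as in Corollary~\ref{cor:cont}; moreover the integral in (\ref{eqn:FK_mixed}) is well defined because shortening the integration interval from $\infty$ to $\tau$ only decreases the already finite gauge of Theorem~\ref{thm:cem}. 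The semimartingale decomposition of Lemma~\ref{lem:sem} continues to hold for the killed process on the stochastic interval $[0,\tau)$, on which $\X_s$ stays in $D\cup\partial_1 D$ so that the boundary terms involve only $\partial_1 D$.

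Running the computation of Theorem~\ref{thm:cem} but stopping at $t\wedge\tau$, multiplying the decomposition of $u(\X_s)$ by $e_g$, integrating by parts and taking $\mathbb{P}_x$-expectations after optional stopping, yields
\[
u(x)=\EW_x\int_0^{t\wedge\tau}e_g(s)f(\X_s)\dd L_s+\EW_x\big[e_g(t\wedge\tau)\,u(\X_{t\wedge\tau})\big].
\]
On $\{\tau\leq t\}$ the boundary term equals $e_g(\tau)u(\X_\tau)=0$ because $\X_\tau\in\partial_2 D$ and $u$ vanishes there, while on $\{\tau>t\}$ it equals $e_g(t)u(\X_t)$, which is dominated by $\lvert\lvert u\rvert\rvert_{\infty}\,\mathbb{P}_x(\tau>t)$. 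Letting $t\rightarrow\infty$ and applying dominated convergence then produces (\ref{eqn:FK_mixed}), provided $\tau<\infty$ $\mathbb{P}_x$-a.s.\ and $u(\X_\tau)=0$.

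The main obstacle will be the rigorous treatment of the Dirichlet part of the boundary, namely justifying $u(\X_\tau)=0$ and $\tau<\infty$ almost surely. For the first point I would prove that $u$ extends continuously to $\partial_2 D$ with boundary value zero: every point of the Lipschitz boundary is regular in the sense of Remark~\ref{rem:1}, so as $x\rightarrow x_0\in\partial_2 D$ one has $\tau\rightarrow 0$ $\mathbb{P}_x$-a.s., whence the Feynman-Kac integral, and therefore $u(x)$, tends to $0=u(x_0)$; this simultaneously upgrades continuity to all of $\overline{D}$. For the finiteness of $\tau$ I would use that the reflecting diffusion is conservative and, by the exponential relaxation of its transition density in Lemma~\ref{lem:exponential}, positive recurrent, so that it charges the non-polar set $\partial_2 D$ and hits it in finite time $\mathbb{P}_x$-a.s.\ for q.e.\ $x$; the strong Feller property then extends all statements from q.e.\ to every $x\in\overline{D}$.
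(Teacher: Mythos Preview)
Your proposal is correct and follows essentially the same route as the paper: the paper's proof consists of the single instruction to repeat the computations of Subsection~\ref{sub:cem} with the killed Feynman--Kac semigroup $\widetilde{T}_t^g v(x):=\EW_x\{[t\leq\tau]e_g(t)v(\X_t)\}$ in place of $\{T_t^g,t\geq 0\}$, which is precisely what you have spelled out. Your write-up in fact supplies details the paper omits, notably the justification that $\tau<\infty$ $\mathbb{P}_x$-a.s.\ via non-polarity of $\partial_2 D$ and the argument for continuity of $u$ up to $\partial_2 D$ via regularity of Lipschitz boundary points.
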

\begin{proof}
Repeat the computations from Subsection \ref{sub:cem} with the Feynman-Kac semigroup $\{\widetilde{T}_t^g,t\geq 0\}$, where $\widetilde{T}_t^g v(x):=\EW_x\{[t\leq \tau]e_g(t)v(\X_t)\}$ instead of $\{T_t^g,t\geq 0\}$.
\end{proof}


\section{Stochastic homogenization}\label{section5}
In this section, we show that the stochastic EIT forward problem may be homogenized both theoretically and numerically by homogenization of the underlying diffusion process on the whole space $\R^d$. Moreover, we provide a continuum version of a quantitative estimate which has been obtained recently for the discrete random walk in random environment in \cite{Gloria,Egloffe}.
\subsection{Preliminaries}
For convenience of the reader, let us recall some standard concepts from homogenization theory. 
Let $\phi:=(\phi_1,...,\phi_d)$, $\phi_i\in L^2_{\text{loc}}(\R^d)$, $i=1,...,d$, denote a vector field. We say that $\phi$ is a \emph{gradient field} if for every $\psi\in C_c^{\infty}(\R^d)$,
\begin{equation*} 
\int_{\R^d}\phi_i\partial_j\psi -\phi_j\partial_i\psi\dd x=0,\quad i,j=1,...,d.
\end{equation*}
Moreover, we say that $\phi$ is \emph{divergence-free} if 
for every $\psi\in C_c^{\infty}(\R^d)$,
\begin{equation*} 
\sum_{i=1}^d\int_{\R^d}\phi_i\partial_i\psi \dd x=0.
\end{equation*}

Now let us consider a conductivity random field $\{\kappa(x,\omega),(x,\omega)\in \R^d\times\Gamma\}$ and let
 $\{\boldsymbol{\Theta}_x, x\in\R^d\}$ denote the underlying dynamical system which is assumed to satisfy the assumptions (i)-(iv) from Subsection \ref{subsec:proba}. A vector field  $\boldsymbol{\phi}\in L^2(\Gamma;\R^d)$ is called a \emph{gradient field}, respectively \emph{divergence-free}, if its realizations 
 $\phi(\cdot,\omega): \R^d\mapsto \R^d$, $x\mapsto \boldsymbol{\phi}(\boldsymbol{\Theta}_x\omega)$ are gradient fields, respectively divergence-free, for $\mathcal{P}$-a.e. $\omega\in\Gamma$. We define the function spaces \begin{eqnarray*}
&L^2_{\text{pot}}(\Gamma):=\{\boldsymbol{\phi}\in L^2(\Gamma;\R^d):\phi(\cdot,\omega)\text{ is a gradient field }\mathcal{P}\text{-a.s.}\}\\
&L^2_{\text{sol}}(\Gamma):=\{\boldsymbol{\phi}\in L^2(\Gamma;\R^d):\phi(\cdot,\omega)\text{ is divergence-free }\mathcal{P}\text{-a.s.}\}. 
\end{eqnarray*}
If $\boldsymbol{\phi}\in L^2_{\text{pot}}(\Gamma)$, we can find a function $\eta:\R^d\times\Gamma\rightarrow\R$ such that $\eta(\cdot,\omega)\in H^1_{\text{loc}}(\R^d)$ and 
\begin{equation}\label{eqn:eta}
\nabla\eta(\cdot,\omega)=\boldsymbol{\phi}(\boldsymbol{\Theta}_{\cdot}\omega)\quad\text{a.e. in }\R^d\text{ for }\mathcal{P}\text{-a.e. }\omega\in\Gamma.
\end{equation}
In particular, (\ref{eqn:eta}) defines a stationary random field with respect to the measure $\mathcal{P}$. We call $\eta$ the \emph{potential} corresponding to $\boldsymbol{\phi}$. 
\begin{remark}
Note that $\boldsymbol{\phi}\in L^2_{\text{pot}}(\Gamma)$ does not imply that $\{\phi(x,\omega),(x,\omega)\in\R^d\times\Gamma\}$ is a stationary random field with respect to $\mathcal{P}$. In fact, it can be shown that this is not true for $d=1$.  
\end{remark}
\noindent We define another function space
\begin{equation*}
\mathcal{V}^2_{\text{pot}}:=\{\boldsymbol{\phi}\in L^2_{\text{pot}}(\Gamma):\mathbb{M}\boldsymbol{\phi}=0\},
\end{equation*} 
so that one obtains an orthogonal Weyl decomposition of $ L^2(\Gamma;\R^d)$, namely
\begin{equation*} 
 L^2(\Gamma;\R^d)=\mathcal{V}^2_{\text{pot}}(\Gamma)\oplus L^2_{\text{sol}}(\Gamma),
 \end{equation*}
 cf., e.g., \cite{Zikov1}.
Let $\xi\in\R^d$ denote a direction vector, i.e., $\lvert \xi\rvert = 1$. The so-called \emph{auxiliary problem }Êfor the direction $\xi$ reads as follows: Find $\boldsymbol{\chi}^{\xi}\in \mathcal{V}^2_{\text{pot}}(\Gamma)$ such that $\boldsymbol{\kappa}(\xi+\boldsymbol{\chi}^{\xi})\in L^2_{\text{sol}}(\Gamma)$ or equivalently,
\begin{equation}\label{eqn:auxiliary}
\mathbb{M}\{\boldsymbol{\kappa}(\xi+\boldsymbol{\chi}^{\xi})\cdot\boldsymbol{\phi}\}=0\quad\text{for all }\boldsymbol{\phi}\in\mathcal{V}_{\text{pot}}^2(\Gamma).
\end{equation} 
For a proof of existence and uniqueness of the solution to the auxiliary problem we refer the reader to the seminal paper \cite{Pa1} by Papanicolaou and Varadhan.

We can now bring the underlying diffusion processes evolving in the random medium into play by recalling a stochastic homogenization result which was obtained by Lejay \cite{Lejay0}. Let $\{\kappa_{\ep}(x,\omega),(x,\omega)\in\R^d\times\Gamma\}$ denote the scaled conductivity random field, see Subsection \ref{subsec:proba}, and let $\X^{\omega,\ep}$ denote the diffusion process on $\R^d$ which is associated with the regular symmetric Dirichlet form 
\begin{equation*}\E^{\omega,\ep}(v,w):=\int_{\R^d}\kappa_{\ep}(\cdot,\omega)\nabla v\cdot\nabla w\dd x, \quad v,w\in \D(\E^{\omega,\ep}):=H^1(\R^d)
\end{equation*} 
on $L^2(\R^d)$. It has been shown in \cite{Lejay0} that, under assumption (A1) from Subsection \ref{subsec:proba}, for $\mathcal{P}$-a.e. $\omega\in\Gamma$
\begin{equation}\label{eqn:qual}
X^{\omega,\ep}_{\cdot}\rightarrow X_{\cdot}^*\quad\text{in law on }C([0,\infty);\R^d)\text{ as }\ep\rightarrow 0,
\end{equation}
where $X^*$ denotes the diffusion process on $\R^d$ which is associated with the \emph{homogenized Dirichlet form}
\begin{equation*}
\E^{*}(v,w):=\int_{\R^d}\kappa^*\nabla v\cdot\nabla w\dd x, \quad v,w\in \D(\E^{*}):=H^1(\R^d)
\end{equation*} 
on $L^2(\R^d)$ and the constant, symmetric and positive definite matrix $\kappa^*$ satisfies the equation
\begin{equation}\label{eqn:const_kappa}
\xi\cdot\kappa^*\xi=\mathbb{M}\{(\xi+\boldsymbol{\chi}^{\xi})\cdot\boldsymbol{\kappa}(\xi+\boldsymbol{\chi}^{\xi})\},
\end{equation}
where $\boldsymbol{\chi}^{\xi}\in \mathcal{V}^2_{\text{pot}}(\Gamma)$ denotes the solution to the auxiliary problem (\ref{eqn:auxiliary}) for the direction $\xi\in\R^d$. 

\subsection{Homogenization of the EIT forward problem}\label{sec:hom2}
The following theorem is our main result. Its assertion is in fact a rather direct consequence of an invariance principle for reflecting diffusion processes obtained recently by Chen, Croydon and Kumagai \cite{Chen1} and the Feynman-Kac formula (\ref{eqn:FK_mixed}) from Corollary \ref{cor:cem}.
\pagebreak
\begin{theorem}\label{thm:hom}
Let $\{\kappa_{\ep}(x,\omega),(x,\omega)\in\R^d\times\Gamma\}$ be a stationary random field satisfying assumption (A1) from Subsection \ref{subsec:proba} and assume that the trajectories satisfy $\kappa(\cdot,\omega)\in C_{\text{loc}}^{0,1}(\overline{D};\R^{d\times d})$ or $\kappa(\cdot,\omega)$ piecewise constant for $\mathcal{P}$-a.e. $\omega\in\Gamma$; let $\Sigma=\emptyset$. Then, for a given voltage pattern $U\in\R^N$, we have for the potentials in the stochastic boundary value problem (\ref{eqn:random_cond}), (\ref{eqn:fp2})
\begin{equation}\label{eqn:pw}
u_{\ep}(x,\omega)\rightarrow u^*(x), \quad x\in\overline{D}\text{ for }\mathcal{P}\text{-a.e. }\omega\in\Gamma,\text{ as }\ep\rightarrow 0,
\end{equation}
and the corresponding electrode currents satisfy 
\begin{equation}\label{eqn:hom}
\lim\limits_{\ep\rightarrow 0}J_l(\ep,\omega)=\frac{1}{\lvert E_l\rvert}\int_{E_l}\kappa^*\nu\cdot\nabla u^{*}(x)\vert_{\partial_1 D}\dd\sigma(x) \quad\text{for }\mathcal{P}\text{-a.e. }\omega\in\Gamma,
\end{equation}
$l=1,...,N$, where the function $u^*\in H^1_{0}(D\cup\partial_1 D)\cap C(\overline{D})$ is the unique solution to the deterministic forward problem
\begin{equation}\label{eqn:homcon}
\nabla\cdot(\kappa^*\nabla u^*)=0\quad\text{in }D
\end{equation}
subject to the boundary conditions
\begin{equation}\label{eqn:fp3}
\begin{aligned}
&\kappa^*\nu\cdot\nabla u^*\vert_{\partial_1 D}+gu^*\vert_{\partial_1 D}=f\quad&\text{on }\partial_1 D\\
&u^*\vert_{\partial_2 D}=0\quad&\text{on }\partial_2 D\\
\end{aligned}
\end{equation}
with a constant, symmetric and positive definite matrix $\kappa^*$ given by (\ref{eqn:const_kappa}). 
\end{theorem}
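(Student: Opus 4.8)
The plan is to read both the stochastic potentials $u_\ep(\cdot,\omega)$ and the homogenized potential $u^*$ through the Feynman--Kac representation (\ref{eqn:FK_mixed}) of Corollary \ref{cor:cem}, and to pass to the limit by invoking the quenched invariance principle of Chen, Croydon and Kumagai \cite{Chen1} for the reflecting diffusions and their boundary local times. First I would dispose of the homogenized problem: since $\kappa^*$ is constant, symmetric and positive definite, the boundary value problem (\ref{eqn:homcon}), (\ref{eqn:fp3}) is precisely of the mixed type treated in Corollary \ref{cor:cem}, so it has a unique solution $u^*\in C(\overline{D})\cap H^1_0(D\cup\partial_1 D)$ with representation $u^*(x)=\EW_x\int_0^{\tau^*}e_g(t)f(\X^*_t)\dd L^*_t$, where $\X^*$ is the reflecting diffusion associated with $\kappa^*$, $L^*$ its boundary local time and $\tau^*:=\inf\{t\ge 0:\X^*_t\in\partial_2 D\}$. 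Under the stated regularity on the trajectories the same corollary gives, for $\mathcal{P}$-a.e.\ $\omega$, the representation $u_\ep(x,\omega)=\EW^{\omega}_x\int_0^{\tau}e_g(t)f(\X^{\omega,\ep}_t)\dd L^{\omega,\ep}_t$ with the process $\X^{\omega,\ep}$ associated with $\kappa_\ep(\cdot,\omega)$ and its boundary local time $L^{\omega,\ep}$.

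The core of the argument is to view both sides as $\EW_x[\Phi(\X,L)]$ for the single path functional
\[
\Phi(w,\ell):=\int_0^{\tau(w)}\exp\Big(-\int_0^t g(w_s)\dd\ell_s\Big)f(w_t)\dd\ell_t,\qquad \tau(w):=\inf\{t\ge 0:w_t\in\partial_2 D\},
\]
on $C([0,\infty);\overline{D})\times C([0,\infty);[0,\infty))$, and to pass to the limit by the continuous mapping theorem. Two features make this work. First, $\Phi$ is \emph{bounded}: as the electrodes are disjoint one has the pointwise bound $\lvert f\rvert\le(\max_l\lvert U_l\rvert)\,g$ on $\partial_1 D$, while the identity $\dd e_g(t)=-e_g(t)\,g(w_t)\dd\ell_t$ gives $\int_0^{\tau}e_g(t)g(w_t)\dd\ell_t=1-e_g(\tau)\le 1$ pathwise, whence $\lvert\Phi\rvert\le\max_l\lvert U_l\rvert$ uniformly; in particular this yields the a priori bound $\lvert\lvert u_\ep(\cdot,\omega)\rvert\rvert_\infty\le\max_l\lvert U_l\rvert$ uniformly in $\ep$ and $\omega$. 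Second, by \cite{Chen1} the pair $(\X^{\omega,\ep},L^{\omega,\ep})$ started at $x$ converges in law to $(\X^*,L^*)$ for $\mathcal{P}$-a.e.\ $\omega$, so it suffices to show that $\Phi$ is continuous at $(\X^*,L^*)$ with $\mathbb{P}_x$-probability one; boundedness then upgrades the resulting convergence in law of $\Phi$ to convergence of the expectations, giving (\ref{eqn:pw}) for every $x\in\overline{D}$.

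The a.s.\ continuity of $\Phi$ is the \emph{main obstacle}, and I would split it into two points. The hitting time $\tau$ is $\mathbb{P}_x$-a.s.\ continuous at paths of $\X^*$ because, $D$ being Lipschitz, every point of $\partial_2 D$ is regular in the sense of \cite[Chapter 4.2]{Karatzas}, exactly as invoked in Remark \ref{rem:1}. The delicate point is the continuity of the Lebesgue--Stieltjes integrals $t\mapsto\int_0^t f(w_s)\dd\ell_s$ and $t\mapsto\int_0^t g(w_s)\dd\ell_s$ under joint uniform-on-compacts convergence of $(w,\ell)$, since $f$ and $g$ are only piecewise continuous, with jumps on the electrode interfaces $I:=\bigcup_l\partial E_l$. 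Here I would use that $\sigma(I)=0$ by assumption, so that the occupation identity of Lemma \ref{lem:occ}, applied to $\X^*$ whose boundary local time has Revuz measure $\sigma$, gives $\EW_x\int_0^\infty[I](\X^*_s)\dd L^*_s=0$; hence $L^*$ charges neither $I$ nor the discontinuity set $\mathbb{P}_x$-a.s., and together with the continuity of $f,g$ off $I$ this makes both Stieltjes functionals, and therefore $e_g$ by composition, continuous at $(\X^*,L^*)$ with probability one. Assembling these, $\Phi$ is bounded and a.s.\ continuous, which completes the proof of (\ref{eqn:pw}).

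Finally, for the electrode currents I would avoid any gradient convergence by exploiting the boundary condition itself. On each electrode $E_l\subset\partial_1 D$ the Robin condition in (\ref{eqn:fp2}) gives $\kappa_\ep\nu\cdot\nabla u_\ep\vert_{\partial_1 D}=f-gu_\ep=(U_l-u_\ep)/z$, so that $J_l(\ep,\omega)=\lvert E_l\rvert^{-1}\int_{E_l}(U_l-u_\ep(x,\omega))/z(x)\dd\sigma(x)$, and the same rewriting applies to the homogenized flux with $u^*$ in place of $u_\ep$ via (\ref{eqn:fp3}). Since (\ref{eqn:pw}) gives $u_\ep(x,\omega)\to u^*(x)$ for every $x\in E_l$, since $\lvert u_\ep\rvert\le\max_l\lvert U_l\rvert$ uniformly, and since $z\ge c_0>0$, dominated convergence on the finite-measure set $E_l$ yields (\ref{eqn:hom}).
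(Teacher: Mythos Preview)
Your proposal is correct and follows the same overall route as the paper: represent both $u_\ep(\cdot,\omega)$ and $u^*$ via the Feynman--Kac formula of Corollary \ref{cor:cem}, invoke the quenched invariance principle of \cite{Chen1}, pass to the limit through a continuous-mapping argument, and recover the electrode currents by rewriting the flux via the Robin condition.

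Your execution is, however, noticeably more streamlined in two places. First, the pathwise bound $\lvert f\rvert\le(\max_l\lvert U_l\rvert)\,g$ together with $\int_0^\tau e_g\,g\,\dd\ell=1-e_g(\tau)\le 1$ gives $\lvert\Phi\rvert\le\max_l\lvert U_l\rvert$ uniformly; this is an elegant observation that the paper does not exploit. It lets you bypass the paper's truncated Riemann-sum approximation of the Stieltjes integral entirely and argue directly via the continuous mapping theorem for the bounded functional $\Phi$. Second, the same uniform $L^\infty$ bound on $u_\ep$ lets you conclude (\ref{eqn:hom}) by dominated convergence on $E_l$; the paper instead uses an $L^2(\partial D)$ bound (via the operator $(\Lambda_{\kappa_\ep}+gI)^{-1}$), uniform integrability, and Egorov's theorem to obtain $L^1$ convergence. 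Your argument is shorter and more transparent.

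One point where the paper is more careful than your sketch: you invoke \cite{Chen1} directly for the \emph{joint} convergence in law of $(\X^{\omega,\ep},L^{\omega,\ep})$, but that reference gives the invariance principle for the reflecting diffusion in the half-space; the joint convergence with the boundary local time (and with the stopped process $\X^{\omega,\ep}_{\cdot\wedge\tau}$) needs an additional step. The paper supplies this via the Fukushima/Skorohod decompositions of Section \ref{sectionSk} together with \cite[Theorem 5.1]{Rozkosz}, after first showing---exactly as you indicate---that the first-hitting-time functional is a.s.\ continuous at $\X^*$ by regularity of $\partial_2 D$. Your a.s.-continuity argument for the Stieltjes integrals when $f,g$ jump on $I=\bigcup_l\partial E_l$ (using $\sigma(I)=0$ and Lemma \ref{lem:occ}) is correct in spirit and matches what the paper does, approximating the electrode indicators from below by continuous functions; just be aware that passing from ``$L^*$ does not charge $I$'' to continuity of $(w,\ell)\mapsto\int_0^t h(w_s)\dd\ell_s$ under joint uniform convergence still requires this sandwiching step, since one integrates against $\dd\ell_n$ rather than $\dd\ell$.
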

\begin{proof}
Let us first show that for $\mathcal{P}$-a.e. $\omega\in\Gamma$, $u_{\ep}(x,\omega)\rightarrow u^*(x)$, $x\in\overline{D}$, as $\ep\rightarrow 0$. 
Let $(\ep_{k})_{k\in\mathbb{N}}$ be an arbitrary monotone decreasing null sequence and let $X^{\omega,\ep}$ denote the reflecting diffusion process on the half-space corresponding to the regular symmetric Dirichlet form $(\E^{\omega,\ep},H^1(\R^d_-\cup\R^{d-1}))$ on $L^2(\R^d_-\cup\R^{d-1})$.
By assumption (A1) from Subsection \ref{subsec:proba}, we deduce from \cite[Section 4]{Chen1} that for $\mathcal{P}$-a.e. $\omega\in\Gamma$
\begin{equation*}
X^{\omega,\ep_k}\rightarrow X^*\quad\text{in law on }C([0,\infty);\R^d_-\cup\R^{d-1}),\text{ as }k\rightarrow\infty,
\end{equation*}
where $X^*$ is the reflecting diffusion process on the half-space associated with the homogenized regular symmetric Dirichlet form $(\E^{*},H^1(\R^d_-\cup\R^{d-1}))$ on $L^2(\R^d_-\cup\R^{d-1})$. The constant, symmetric and positive definite matrix $\kappa^*$ is given by (\ref{eqn:const_kappa}).

Let us first show that for every $x\in\overline{D}$ and $\mathcal{P}$-a.e. realization $\omega\in\Gamma$ of the random medium
$$\X^{\omega,\ep_k}_{\cdot\wedge\tau^{\omega,\ep_k}}\rightarrow \X_{\cdot\wedge\tau^*}^*\quad\text{in law on }C([0,\infty);\R^d_-\cup\R^{d-1}),\text{ as }k\rightarrow\infty.$$ 
Consider the functional $F: C([0,\infty);\R^d_-\cup\R^{d-1})\rightarrow[0,\infty]$, $$\phi\mapsto\begin{cases}
\infty,\quad&\text{if for all }t\geq 0: \lvert \phi(t)\rvert <R,\\
\inf\{t\geq 0: \lvert \phi(t)\rvert=R\}\quad&\text{else},
\end{cases} 
$$ 
defined in such a way that $F(\X^{\omega,\ep_k})=\tau^{\omega,\ep_k}$. Let $(\phi_k)_{k\in\N}$ denote a sequence of continuous functions that converges uniformly towards $\phi$ on compacts in $[0,\infty)$. If $\lim\inf_{k\rightarrow\infty}F(\phi_k)$ is finite, then we may extract a subsequence, still denoted $(\phi_k)_{k\in\N}$ for convenience, such that $F(\phi_k)\rightarrow \lim\inf_{k\rightarrow\infty}F(\phi_k)$. We have
\begin{equation*}
\begin{split}
\lvert \phi(\lim\inf_{k\rightarrow\infty}F(\phi_k))-\phi_k(F(\phi_k))\rvert \leq&\lvert \phi(\lim\inf_{k\rightarrow\infty}F(\phi_k))-\phi(F(\phi_k))\rvert\\
&+\lvert \phi(F(\phi_k))-\phi_k(F(\phi_k))\rvert
\end{split}
\end{equation*}
and by our assumption, the right-hand side vanishes as
$k\rightarrow\infty$. From the closedness of $\partial_1 D$, we conclude
hence that $\phi(\lim\inf_{k\rightarrow\infty}F(\phi_k))\in \partial_1
D$. In particular we have that $F(\phi)\leq
\lim\inf_{k\rightarrow\infty}F(\phi_k)$ if $\sup_{t\in[0,T]}\lvert
\phi(t)-\phi_k(t)\rvert \rightarrow 0$, as $t\rightarrow\infty$, for all
$T>0$, i.e., $F$ is lower semi-continuous. Now assume that $\phi$ is a
discontinuity point of $F$, i.e., there is $\delta > 0$ and $k_0$ such
that $F(\phi) + \delta \le F(\phi_k)<\infty$ for all $k \ge k_0$.
Then it follows that $\phi(t)\in \overline{D}$ for all $t\in [F(\phi),\lim\inf_{k\rightarrow\infty}F(\phi_k))$. 
However, the boundary $\partial_1 D$ is regular in the sense of \cite[Chapter 4.2]{Karatzas}, that is, a diffusion process originating from $x\in\partial_1 D$ will immediately exit from $\overline{D}$ $\mathbb{P}_x^{\omega,\ep_k}$-a.s., respectively $\mathbb{P}_x^*$-a.s. In other words, the set of discontinuities of $F$ is a null set with respect to these measures and hence the claim follows from the continuous mapping theorem, cf.~\cite{Billingsley}.
Now, as in the proof of \cite[Theorem 5.1]{Rozkosz}, it follows with the Fukushima decompositions in Section \ref{section5} of the second chapter that
\begin{equation*}
(X^{\omega,\ep_k}_{\cdot\wedge\tau^{\omega,\ep_k}},L^{\omega,\ep_k}_{\cdot\wedge\tau^{\omega,\ep_k}})\rightarrow (X^*_{\cdot\wedge\tau^*},L^*_{\cdot\wedge\tau^*})\quad\text{in law on }C([0,\infty);\R^d_-\cup\R^{d-1}\times\R_+),
\end{equation*}
 as $k\rightarrow\infty$. With regard to the Feynman-Kac formula (\ref{eqn:FK_mixed}), the assertion (\ref{eqn:pw}) can be proved as follows:  

Suppose first that $f$ is continuous. Then we have to show that
\[
\limsup_{j \to \infty} \Big\lvert\EWÄ_x \int_0^{\tau_j} e_g^{\omega,\ep_j}(t) f(\X_t^{\omega,\ep_j})
\dd L_t^{\omega,\ep_j} -
\EW_x \int_0^{\tau} e_g(t) f(\X_t)
\dd L_t\Big\rvert = 0.
\]
We estimate this with a difference of truncated Riemann sums
\begin{equation*}
\begin{split}
\Big\lvert\sum_{k = 0}^{\lfloor N/h \rfloor} \Big(&\EW_x \int_{t_k}^{t_{k+1}}
e_g^{\omega,\ep_j}(t_k) f(\X_{t_k}^{\omega,\ep_j})[t_{k+1} < \tau_j]
\dd L_t^{\omega,\ep_j}\\ 
&-\EW_x\int_{t_k}^{t_{k+1}}e_g(t_k) f(\X_{t_k})[t_{k+1} < \tau] \dd L_t
\Big)\Big\rvert,
\end{split}
\end{equation*}
where $t_k = kh$ and $h$ is a step size.
The difference $S_{N,h}$ of truncated Riemann sums for fixed $N$ and $h$ goes to
zero as $j \to \infty$ if we assume that $(\X^{\omega,\ep_j},
L^{\omega,\ep_j}, A^{\omega,\ep_j})$ converge weakly to $(\X, L, A)$, where
$A^{\omega, \ep_j} = \log e_g^{\omega, \ep_j}$ and $A = \log e_g$.

The error terms
for $\X^{\omega,\ep_j}$ and $\X$ are nearly analogous, so it is enough to consider
just $\X^{\omega,\ep_j}$ in detail. The increments (excluding the edge
case where $t_k < \tau_j < t_{k+1}$ which we will omit but which can be
treated in the same way) are of following form
\[
\begin{split}
&\int_{t_k}^{t_{k+1}} \big(e^{\omega,\ep_j}_g(t)f(\X^{\omega,\ep_j}_t) -
e^{\omega,\ep_j}_g(t_k)f(\X^{\omega,\ep_j}_{t_k})\big) \dd
L^{\omega,\ep_j}_t\\
&= \int_{t_k}^{t_{k+1}} \big(e^{\omega,\ep_j}_g(t)-
e^{\omega,\ep_j}_g(t_k)\big)f(\X^{\omega,\ep_j}_t) \dd L^{\omega,\ep_j}_t 
\\
&\phantom{\int_{t_k}^{t_{k+1}} \big(e^{\omega,\ep_j}_g(t)-
e^{\omega,\ep_j}_g(t_k)\big)}+e^{\omega,\ep_j}_g(t_k)\int_{t_k}^{t_{k+1}} \Big(f(\X^{\omega,\ep_j}_t) -
f(\X^{\omega,\ep_j}_{t_k})\Big) \dd L^{\omega,\ep_j}_t
\end{split}
\]
The latter term can be handled with the continuity of the paths of
$\X$ together with the uniform continuity of $f$ on the compact set
$\overline D$. This is since
\[
\begin{split}
&\Big\lvert e^{\omega,\ep_j}_g(t_k)\int_{t_k}^{t_{k+1}} \big(f(\X^{\omega,\ep_j}_t) -
f(\X^{\omega,\ep_j}_{t_k})\big) \dd L^{\omega,\ep_j}_t\Big\rvert  \\
&\le  \Big(2\big(1-\psi_{\delta}(\theta_h(\X^{\omega,\ep_j}))\big) \lvert\lvert f\rvert\rvert_\infty +
\theta_{2\delta}(f)
\psi_{\delta}(\theta_h(\X^{\omega,\ep_j}))
\Big)(L^{\omega,\ep_j}_{t_{k+1}} - L^{\omega,\ep_j}_{t_k}\big), 
\end{split}
\]
where $\theta_\delta(x)$ is the maximum variation of the function $x$ on
the interval $[0,N]$
\[
\theta_\delta(x): = \sup\big\{ \lvert x(t)-x(s)\rvert  \,; \, 0 \le t,s \le N, \lvert t-s\rvert <
\delta\big\}
\]
and $\psi_{\delta}(t)$ is the continuous approximation 
of the indicator function $[ t < \delta]$ with support in
$[-2\delta, 2\delta]$. Therefore, after taking the limit $j \to \infty$,
the latter terms give that the corresponding total approximation error can be bounded by
\[
4\|f\|_\infty \EW_x \{[\theta_h(\X) \ge \delta] L_{\tau\wedge N}\}
+ 2\theta_{2\delta}(f)\EW_x \{L_{\tau\wedge N}\}
\]
which goes to zero as $\delta \to 0$.

The first term can be estimated by
\[
\begin{split}
&\int_{t_k}^{t_{k+1}} \big(e^{\omega,\ep_j}_g(t)-
e^{\omega,\ep_j}_g(t_k)\big)f(\X^{\omega,\ep_j}_t) \dd L^{\omega,\ep_j}_t 
 \\
&\le 2 \|f\|_\infty \|g\|_\infty
\int_{t_k}^{t_{k+1}} \big(L^{\omega,\ep_j}_t - L^{\omega,\ep_j}_{t_k}\big)
\dd L^{\omega,\ep_j}_t=\|f\|_\infty \|g\|_\infty(L^{\omega,\ep_j}_{t_{k+1}} -
L^{\omega,\ep_j}_{t_k}\big)^2
\end{split}
\]
since $g \ge 0$. Therefore, after taking $j \to \infty$ the first term
gives the total approximation error that is bounded by
\[
2\|f\|_\infty\|g\|_\infty \Big(\EW_x \{[\theta_h( L) \ge \delta] L_{\tau\wedge N}\}
+ 2\delta\EW_x \{L_{\tau\wedge N}\}\Big)
\]
which goes to zero as $\delta \to 0$ as well.

We are still left with the truncation.  The truncation can be removed
since $\tau_j$ and $\tau$ are a.s. finite and moreover,
$L^{\omega,\ep_j}_{\tau_j}$ converges weakly to $L_\tau$ and thus, we get
a uniform estimate
\[
\begin{split}
&\limsup_{j \to \infty} \Big\lvert\EW_x \int_0^{\tau_j} e_g^{\omega,\ep_j}(t) f(\X_t^{\omega,\ep_j})
\dd L_t^{\omega,\ep_j} -
\int_0^{\tau} e_g(t) f(\X_t)
\dd L_t\Big\rvert\\ &\le 2 \|f\|_\infty \EW_x\{L_{\tau} - L_{\tau\wedge N}\}
\end{split}
\]
which gives the claimed convergence for continuous $f$ if we can prove
weak convergence of the sequence $(A^{\omega,\ep_j})_{j\in\N}$.

Therefore, we will next verify the weak convergence of $A^{\omega,\ep_j}$ jointly
with $(X^{\omega,\ep_j}, L^{\omega,\ep_j})$. We can assume that
$X^{\omega,\ep_j} \to X$ and $L^{\omega,\ep_j} \to L$ almost surely in
$C(0,T)$. Suppose that $g = \sum_{l=1}^N g_l [E_l]$ where $g_l \ge 0$
and continuous, $E_l \cap E_k = \emptyset$ and $\sigma(\partial E_l) =
0$. Therefore, it is enough to show the convergence for $g = [E_l]$ for
$E_l$ open in $\partial D$. Since $E_l$ is open, it can be approximated
from below by an increasing sequence of continuous functions $g'_k$ that
converge pointwise to $g$.

It follows from the almost sure convergence of $X^{\omega,\ep_j}$ and
$L^{\omega,\ep_j}$ that
\[
\liminf_{j \to \infty} \int_0^t g(\X^{\omega,\ep_j}_t) \dd L_t^{\omega,\ep_j} \ge \limsup_{k \to
\infty}\int_0^t g'_k(\X_t) \dd L_t = 
\int_0^t g(\X_t) \dd L_t.
\]
Since $\X^{\omega,\ep_j}$ and $\X$ will not hit $\partial E_l$ in
$[0,T]$ almost surely, we can get the other direction by considering
$1-g$. Therefore,
\[
\limsup_{j \to \infty} \int_0^t g(\X^{\omega,\ep_j}_t) \dd L_t^{\omega,\ep_j} \le 
\int_0^t g(\X_t) \dd L_t.
\]
Using countability of the rational numbers, we obtain
\[
\forall t \in [0,T]\cap \mathbb Q\colon \lim_{j \to \infty}
A^{\omega,\ep_j}(t) = A(t)
\]
almost surely and by monotonicity this implies the almost sure
convergence of $A^{\omega,\ep_j}$ to $A$ in $C(0,T)$, which implies the
weak convergence of the original versions. The same technique allows us to extend the assertion to the case of discontinuous $f$.

For the proof of (\ref{eqn:hom}) note that the boundary condition (\ref{eqn:fp2}) allows us to write 
\begin{equation}\label{eqn:currentep}
J_l(\ep_k,\omega)=\frac{1}{\lvert E_l\rvert}\int_{E_l}(f-gu_{\ep_k}(\cdot,\omega)\vert_{\partial_1 D})\dd\sigma(x)\quad \text{for }\mathcal{P}\text{-a.e. } \omega\in\Gamma 
\end{equation}
and that (\ref{eqn:fp2}) may be written in the form $(\Lambda_{\kappa_{\ep}}+gI)u_{\ep}=f$. We deduce from the well-posedness of the forward problem that $(\Lambda_{\kappa_{\ep}}+gI)^{-1}$, $L^2(\partial D)\rightarrow L^2(\partial D)$, is bounded. 
Since, due to assumption (A1), the corresponding constant does not depend on $\ep$, the sequence $(u_{\ep_k})_{k\in\N}$ is bounded in $L^2(\partial D)$ for $\mathcal{P}$-a.e. $\omega\in\Gamma$ implying its uniform integrability, see, e.g., \cite{Klenke}. As we already know the pointwise convergence $u_{\ep_k}(x,\omega)\rightarrow u^*(x)$, $x\in\overline{D}$, as $k\rightarrow \infty$, an application of Egorov's theorem yields convergence in $L^1(\partial D)$ so that the assertion follows by the triangle inequality and taking the limit $k\rightarrow\infty$ inside the integration in (\ref{eqn:currentep}).
\end{proof}
\begin{remark}
We would like to point out that the effective conductivity $\kappa^*$ is determined by the invariance principle on the whole space $\R^d$. 
\end{remark}
\begin{remark}
We would like to mention that for more general functions $f$ we can
prove that the well-posedness of the
forward problem \eqref{eqn:random_cond}, \eqref{eqn:fp2} together with the
weak convergence of $(X^{\omega, \ep}, L^{\omega, \ep}, A^{\omega, j})$
and some stopping arguments give a slightly weaker form
of~\eqref{eqn:pw}, namely that the convergence holds in $\mathcal
P$-probability.
\end{remark}

\subsection{Continuum approximation of the effective conductivity}
In this subsection, we provide the theoretical foundation for the convergence analysis of numerical homogenization methods based on simulation of the underlying diffusion process. More precisely, a rigorous convergence analysis of such a method requires a quantitative estimate that is stronger than the qualitative result (\ref{eqn:qual}), which was obtained in \cite{Lejay0} using merely the central limit theorem for martingales. We provide such a quantitative result in the following theorem by generalizing a classical argument due to Kipnis and Varadhan \cite{Kipnis}. The proof relies on new spectral bounds bounds which were obtained recently by Gloria and Otto \cite{GloriaOtto}. We refer the reader to the recent papers \cite{Egloffe,Gloria} for an analogous estimate for the discrete lattice random walk in random environment as well as to the paper \cite{Mourrat} which was the first one to use the Kipnis and Varadhan argument in order to obtain quantitative results. 

\begin{theorem}\label{thm:Kipnis}
Let $\{\kappa_{\ep}(x,\omega),(x,\omega)\in\R^d\times\Gamma\}$ be a stationary random field satisfying assumptions (A1) and (A2) from Subsection \ref{subsec:proba}. 
Then for every direction vector $\xi\in \R^d$ there exist positive constants $c_1$, $c_2$ such that
\begin{equation}\label{eqn:estimate}
\left\lvert \frac{\overline{\EW}(X_t^{\cdot}\cdot\xi)^2}{2t}-\xi\cdot\kappa^*\xi\right\rvert\leq c_1\begin{cases} \lvert\log t \rvert^{c_2} t^{-1}\quad
&d=2\\
t^{-1}\quad&d=3.
\end{cases}
\end{equation}
\end{theorem}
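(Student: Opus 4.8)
The plan is to run the Kipnis--Varadhan martingale method for the process $\X^{\cdot}$ under the annealed measure $\overline{\mathbb P}$, using the environment seen from the particle together with the near-edge spectral estimates of Gloria and Otto \cite{GloriaOtto}. First I would introduce the environment process $\eta_t:=\boldsymbol{\Theta}_{\X_t}\omega$ on $(\Gamma,\G,\mathcal P)$. Since $\boldsymbol{\kappa}$ is stationary and the underlying Dirichlet form is symmetric, $\eta$ is a reversible Markov process with invariant law $\mathcal P$; in particular, under $\overline{\mathbb P}=\mathcal P\,\mathbb P_0^{\omega}$ its increments are stationary and its $L^2(\Gamma)$-generator $\mathcal L_\Gamma$ is self-adjoint and non-positive. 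Writing $\boldsymbol{\chi}^\xi\in\mathcal V^2_{\text{pot}}(\Gamma)$ for the solution of the auxiliary problem (\ref{eqn:auxiliary}), let $\eta^\xi(x,\omega)$ denote the associated $\kappa(\cdot,\omega)$-harmonic coordinate, i.e. $\nabla\eta^\xi=(\xi+\boldsymbol{\chi}^\xi)(\boldsymbol{\Theta}_{\cdot}\omega)$ with $\nabla\cdot(\kappa\nabla\eta^\xi)=0$. Because $\eta^\xi$ is harmonic, its zero-energy part in the Fukushima decomposition vanishes and $M_t^\xi:=\eta^\xi(\X_t)-\eta^\xi(\X_0)$ is a martingale additive functional. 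Computing its bracket exactly as for the energy measure in the proof of Proposition \ref{thm:Sk}, and using stationarity, gives $\overline{\EW}(M_t^\xi)^2=\overline{\EW}\langle M^\xi\rangle_t=2t\,\mathbb M\{(\xi+\boldsymbol{\chi}^\xi)\cdot\boldsymbol{\kappa}(\xi+\boldsymbol{\chi}^\xi)\}=2t\,\xi\cdot\kappa^*\xi$ for every $t\ge0$, by the very definition (\ref{eqn:const_kappa}) of $\kappa^*$.

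Next, the decomposition $\X_t\cdot\xi=M_t^\xi-\Delta\psi_t$ with corrector increment $\Delta\psi_t:=M_t^\xi-(\X_t-\X_0)\cdot\xi$ reduces the theorem to estimating $\overline{\EW}(\Delta\psi_t)^2-2\,\overline{\EW}\{M_t^\xi\Delta\psi_t\}$. The martingale--martingale contribution to the cross term vanishes identically, since it equals $2t\,\mathbb M\{\boldsymbol{\chi}^\xi\cdot\boldsymbol{\kappa}(\xi+\boldsymbol{\chi}^\xi)\}=0$ by (\ref{eqn:auxiliary}) applied with the test field $\boldsymbol{\phi}=\boldsymbol{\chi}^\xi$; what remains is governed by the zero-energy part, that is, by the stationary drift $\boldsymbol{V}^\xi\in L^2(\Gamma)$ for which the true corrector solves $-\mathcal L_\Gamma\boldsymbol{\phi}=\boldsymbol{V}^\xi$. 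Following Kipnis--Varadhan \cite{Kipnis} and Mourrat \cite{Mourrat}, I would introduce the regularized corrector $\boldsymbol{\phi}_\mu:=(\mu-\mathcal L_\Gamma)^{-1}\boldsymbol{V}^\xi$ with the choice $\mu=1/t$ and apply Dynkin's formula to $\boldsymbol{\phi}_\mu(\eta_{\cdot})$, rewriting $\X_t\cdot\xi$ as a martingale with stationary increments plus the explicit remainders $\boldsymbol{\phi}_\mu(\eta_t)-\boldsymbol{\phi}_\mu(\eta_0)$ and $\mu\int_0^t\boldsymbol{\phi}_\mu(\eta_s)\dd s$. Using reversibility (the time reversal of $\eta$ has the same law under $\overline{\mathbb P}$) to cancel the a priori dominant contributions, I would reduce the error to a single spectral integral against the spectral measure $\dd e_{\boldsymbol{V}^\xi}$ of $\boldsymbol{V}^\xi$ relative to $-\mathcal L_\Gamma$, of the schematic form $\tfrac{c}{t}\int_0^\infty\lambda^{-1}(1-e^{-t\lambda})\,\dd e_{\boldsymbol{V}^\xi}(\lambda)$.

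Finally I would insert the Gloria--Otto spectral bound. Under (A1)--(A2) the bottom of the spectrum of $-\mathcal L_\Gamma$ is \lq\lq thin\rq\rq, quantitatively $e_{\boldsymbol{V}^\xi}([0,\lambda])\le c\,\lambda^{d/2}$ for $d=3$ and $e_{\boldsymbol{V}^\xi}([0,\lambda])\le c\,\lambda\,\lvert\log\lambda\rvert^{c_2}$ for $d=2$, cf. \cite{GloriaOtto}. Substituting $\dd e_{\boldsymbol{V}^\xi}(\lambda)\lesssim\lambda^{d/2-1}\dd\lambda$ near the origin and rescaling $u=t\lambda$ turns the spectral integral into $t^{-d/2}\int_0^t(1-e^{-u})u^{d/2-2}\dd u$, which is of order $t^{-1}$ for $d=3$ and of order $\lvert\log t\rvert^{c_2}t^{-1}$ for $d=2$, yielding the estimate (\ref{eqn:estimate}).

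The hard part will be the middle step. Because there is no Poincar\'e inequality on $(\Gamma,\mathcal P)$ for the horizontal gradient, $-\mathcal L_\Gamma$ has no spectral gap, so one cannot invoke exponential relaxation and must instead extract the precise near-zero behaviour of $\dd e_{\boldsymbol{V}^\xi}$. Equally delicate is organizing the Kipnis--Varadhan decomposition and the time-reversal cancellation so that the genuinely $t^{-1}$ remainder survives, rather than the naive $t^{-1/2}$ bound that a crude Cauchy--Schwarz applied to $\boldsymbol{\phi}_\mu(\eta_t)-\boldsymbol{\phi}_\mu(\eta_0)$ would produce; it is exactly here that the sharpness of the Gloria--Otto estimates, the logarithmic correction in $d=2$, and the restriction to $d\in\{2,3\}$ enter.
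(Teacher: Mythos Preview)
Your proposal follows essentially the same Kipnis--Varadhan route as the paper: environment process, corrector/harmonic coordinate, martingale plus remainder decomposition, time-reversal to kill the cross term, spectral representation of the remainder, and then the Gloria--Otto near-edge spectral bound. Two points of comparison are worth noting.

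First, the paper dispatches the cross term in one stroke: by \cite{Fitzsimmons} the martingale part $M_t^{\cdot}\cdot\xi$ is odd and the remainder $R_t^{\cdot}\cdot\xi$ is even under time reversal of the environment process, so $\overline{\EW}(M_t^{\cdot}\cdot\xi)(R_t^{\cdot}\cdot\xi)=0$ directly, yielding the clean identity $t^{-1}\overline{\EW}\{(X_t^{\cdot}\cdot\xi)^2-(M_t^{\cdot}\cdot\xi)^2\}=t^{-1}\overline{\EW}(R_t^{\cdot}\cdot\xi)^2$. You instead split off the martingale--martingale piece via the auxiliary equation and defer the remaining martingale--zero-energy piece to the regularized-corrector step; this works but is more roundabout, and you should make sure that piece is genuinely absorbed and not merely bounded by Cauchy--Schwarz (which would lose a factor $t^{1/2}$).

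Second, your spectral bookkeeping is off by a compensating pair of shifts. The paper obtains $\overline{\EW}(R_t^{\cdot}\cdot\xi)^2=2\int_0^\infty(1-e^{-\lambda t})\lambda^{-2}\dd(E_\lambda\boldsymbol{v}^\xi,\boldsymbol{v}^\xi)$ with $\boldsymbol{v}^\xi=\boldsymbol{D}\cdot\boldsymbol{\kappa}\xi$, via the regularized corrector $\boldsymbol{\eta}_\delta^\xi=(\delta+\boldsymbol{\mathcal{L}})^{-1}\boldsymbol{v}^\xi$ and $\delta\to 0$ (your $\mu=1/t$ plays the same role). The correct Gloria--Otto input is $\int_0^\gamma\dd(E_\lambda\boldsymbol{v}^\xi,\boldsymbol{v}^\xi)\le c\,\gamma^{d/2+1}$, not $\gamma^{d/2}$; your $\lambda^{-1}$ integrand together with the exponent $d/2$ happens to produce the same near-zero scaling, but both are one power off. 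With the correct pair, the integral over $[1,\infty)$ and over $[0,t^{-1}]$ are each $O(1)$, while the middle range $[t^{-1},1]$ is $O(1)$ for $d=3$ and $O(\log t)$ for $d=2$, which after dividing by $2t$ gives (\ref{eqn:estimate}).
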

\begin{proof}
For fixed $\omega\in\Gamma$ let us consider the diffusion process
$X^{\omega}$ on $\R^d$ which is associated with the symmetric regular
Dirichlet form $(\E^{\omega,1},H^1(\R^d))$ on $L^2(\R^d)$ under the
measure $\mathbb{P}_0^{\omega}$. Following \cite{Kipnis}, we search for
a decomposition of the form
\begin{equation}\label{Kipnis_decomp}
X_{t}^{\omega}=M_{t}^{\omega}+R_{t}^{\omega},
\end{equation}
where $M_t^{\omega}$ is a $\mathbb{P}_0^{\omega}$-martingale and for
every direction $\xi\in\R^d$ the projected remainder
$R_{t}^{\omega}\cdot\xi$ converges to zero in
$L^2(\overline{\Gamma})$ as $t\rightarrow\infty$.

Once we have found a suitable decomposition~\eqref{Kipnis_decomp}, we first show that
\begin{equation}\label{eqn:estimate2}
t^{-1}\overline{\EW}\{(X_t^\cdot\cdot \xi)^2 - (M_t^\cdot\cdot\xi)^2\} =
t^{-1}\overline{\EW}(R_t^\cdot\cdot \xi)^2.
\end{equation}
Then we will use spectral calculus
to estimate the right-hand side of~\eqref{eqn:estimate2}.

In order to obtain the decomposition~\eqref{Kipnis_decomp}, we recall that the auxiliary problem (\ref{eqn:auxiliary}) is equivalent to the following stochastic elliptic equation in the physical space $\R^d$: 
Find $\boldsymbol{\chi}^{\xi}\in \mathcal{V}^2_{\text{pot}}(\Gamma)$ such that for $\mathcal{P}$-a.e. $\omega\in\Gamma$, the corresponding potential $\eta^{\xi}:\R^d\times\Gamma\rightarrow\R$ 
is in $C(\R^d)\cap H^1_{\text{loc}}(\R^d)$ as a function of $x$, satisfies $\eta^{\xi}(0,\omega)= 0$ and 
\begin{equation}\label{eqn:corrector}
-\nabla\cdot\kappa(\cdot,\omega)(\xi+\nabla\eta^{\xi}(\cdot,\omega))=0\quad\text{in }\R^d.
\end{equation}
Let us define the function 
$$\phi:\R^d\times\Gamma\rightarrow\R^d, \quad\phi(x,\omega):= x+\eta(x,\omega)-\eta(0,\omega),$$ 
where $\eta:=(\eta^{e_1},...,\eta^{e_d})^T$ and $\eta^{e_i}$,
$i=1,...,d$, denotes the potential corresponding to the solution to
the auxiliary problems for the coordinate directions. As the transition
density kernel of $X^{\omega}$ is jointly H\"older-continuous and
$\phi_i(\cdot,\omega)\in C(\R^d)\cap H^1_{\text{loc}}(\R^d)$,
$i=1,...,d$, for $\mathcal{P}$-a.e. $\omega\in\Gamma$, the Fukushima
decomposition of $\phi_i(X_t^{\omega},\omega)$ holds for every starting
point $x\in \R^d$ rather than quasi-every $x\in \R^d$. A straightforward
computation using the fact that $\eta$ is defined via the auxiliary
problem yields that the continuous additive functional of zero energy in
the Fukushima decomposition vanishes so that
\begin{equation*}
\phi_i(X_t^{\omega},\omega)=M_t^{\phi_i(\cdot,\omega)},\quad i=1,...,d.
\end{equation*}
We set
\begin{equation*}
M^{\omega}_t:=(M_t^{\phi_1(\cdot,\omega)},...,M_t^{\phi_d(\cdot,\omega)})^T\quad\text{and}\quad R^{\omega}_t:= -\eta(X_t^{\omega},\omega)+\eta(0,\omega)
\end{equation*}
and consider the quantity
\begin{equation}\label{eqn:quantity}
\overline{\EW}(X_t\cdot\xi)^2=\overline{\EW}(M_t^{\cdot}\cdot\xi)^2+\overline{\EW}(R_t^{\cdot}\cdot\xi)^2+2\overline{\EW}(M_t^{\cdot}\cdot\xi)(R_t^{\cdot}\cdot\xi).
\end{equation}
By computing the predictable quadratic variation of the martingale
additive functional we obtain for all $t\geq 0$ and a.e. $x\in\R^d$
\begin{eqnarray*}
\EW_x^{\omega}(M_t^{\omega}\cdot\xi)^2
=\EW_x^{\omega}\Big(\int_0^t2(\xi+\nabla\eta^{\xi}(X_s,\omega))\cdot\kappa(X_s,\omega)(\xi+\nabla\eta^{\xi}(X_s,\omega))\dd s\Big).\end{eqnarray*}
By the stationarity of $\{\nabla\eta^{\xi}(x,\omega),(x,\omega)\in\R^d\times\Gamma\}$ with respect to $\mathcal{P}$ we have thus
\begin{equation*}
\overline{\EW}(M_t^{\omega}\cdot\xi)^2= 2\xi\cdot\kappa^*\xi t\quad\text{for all }t\geq 0.
\end{equation*}
Moreover, as in \cite{DeMasi}, it follows that the last term on the
right-hand side of (\ref{eqn:quantity}) vanishes. This can be seen by studying the so-called \emph{environment seen by the particle} process, i.e., the stochastic process defined by
\begin{equation*}
Y_t^{\omega}:=\begin{cases}
\boldsymbol{\Theta}_{X^{\omega}_t}\omega&\quad t>0\\
\omega&\quad t=0.
\end{cases}
\end{equation*}
$\{Y_t^{\omega},t\geq 0\}$ is a stationary process with respect to the annealed measure $\overline{\mathbb{P}}$, i.e., 
for every finite collection of times $t^{(i)}$, $i=1,...,k$, the joint distribution of $Y_{t^{(1)}+h},...,Y_{t^{(k)}+h}$ under $\overline{\mathbb{P}}$ does not depend on $h\geq 0$. It is well known that the underlying dynamical system $\{\boldsymbol{\Theta}_x,x\in\R^d\}$ defines a $d$-parameter group $\{\boldsymbol{S}_x,x\in\R^d\}$ of unitary operators on $L^2(\Gamma)$ by $\boldsymbol{S}_x\boldsymbol{\psi}(\omega):=\boldsymbol{\psi}(\boldsymbol{\Theta}_x\omega)$ and this group is strongly continuous, cf. \cite{Lejay0}. Its $d$ infinitesimal generators $(\boldsymbol{D}_1,\D(\boldsymbol{D}_1)),...,(\boldsymbol{D}_d,\D(\boldsymbol{D}_d))$ are given by
\begin{equation*}
\boldsymbol{D}_i\boldsymbol{\psi}=\lim_{h\rightarrow 0+}\frac{\boldsymbol{S}_{he_i}\boldsymbol{\psi}-\boldsymbol{\psi}}{h},\quad i=1,...,d,
\end{equation*}
for all $\boldsymbol{\psi}\in L^2(\Gamma)$ such that the limit exists. These operators are closed and densely defined. We denote $\boldsymbol{D}:=(\boldsymbol{D}_1,...,\boldsymbol{D}_d)^T$ and introduce the infinitesimal generator $(\boldsymbol{\mathcal L},\D(\boldsymbol{\mathcal L}))$ on $L^2(\Gamma)$ of the environment viewed by the particle process, that is, the non-negative definite self-adjoint operator $\boldsymbol{\mathcal{L}}:=-\boldsymbol{D}\cdot\boldsymbol{\kappa}\boldsymbol{D}$ on $L^2(\Gamma)$. Note that due to the fact that the trajectories of the random conductivity field are in general not differentiable, one has to use Dirichlet form theory in order to give a precise meaning to computations involving $\boldsymbol{\mathcal{L}}$. We refer the reader to the work \cite{Lejay0} where this has been carried out in some detail. By the
self-adjoinedness of $\boldsymbol{\mathcal L}$, the law of the
environment as viewed from the particle process under
$\overline{\mathbb{P}}$ is invariant with respect to time reversal and
$M^{\omega}$ is odd by \cite[Corollary 2.1]{Fitzsimmons}, i.e., it
changes its sign under time reversal, whereas $R^{\omega}$, which is the
zero energy part of the Fukushima decomposition (\ref{Kipnis_decomp}),
is even by \cite[Theorem 2.1]{Fitzsimmons}. Thus, we notice that the identity~\eqref{eqn:estimate2} holds, as claimed.

We will next show that the estimate~\eqref{eqn:estimate} follows from the
following spectral representation
\begin{equation}\label{eqn:spectral}
\overline{\mathbb{\EW}}(R^{\cdot}_t\cdot\xi)^2=2\int_0^{\infty}(1-e^{-\lambda t})\lambda^{-2}\dd(E_{\lambda}\boldsymbol{v}^{\xi},\boldsymbol{v}^{\xi}),
\end{equation}
where $\{E_{\lambda},\lambda\in\R\}$ is the unique spectral family given 
by the spectral theorem 
such that
$\boldsymbol{\mathcal{L}}=\int_{0}^{\infty}\lambda\dd E_{\lambda}$ and
the function
$\boldsymbol{v}^{\xi}:=\boldsymbol{D}\cdot\boldsymbol{\kappa}\xi\in
L^2(\Gamma)$.
Indeed, given the formula~\eqref{eqn:spectral} for the projected
remainder and due to the assumption (A2), we
can now exploit the following optimal
estimate from \cite{GloriaNeukamm, GloriaOtto}: For all $0<\gamma\leq
1$, there exists a positive constant $c$ such that 
\begin{equation*}
\int_0^{\gamma}\dd(E_{\lambda}\boldsymbol{v}^{\xi},\boldsymbol{v}^{\xi})\leq c \gamma^{d/2+1}.
\end{equation*}

More precisely, we split the integral (\ref{eqn:spectral}) into three
parts, the first ranging from $0$ to $t^{-1}$, the second from $t^{-1}$
to $1$ and the third from $1$ to $\infty$, respectively when $t > 1$. For the latter
we have the trivial estimate 
\begin{equation*}
\int_1^{\infty}\dd(E_{\lambda}\boldsymbol{v}^{\xi},\boldsymbol{v}^{\xi})\leq\int_0^{\infty}\dd(E_{\lambda}\boldsymbol{v}^{\xi},\boldsymbol{v}^{\xi})=\mathbb{M}\{(\boldsymbol{v}^{\xi})^2\}.
\end{equation*}
The first part is bounded by a positive constant as well, namely by
\begin{eqnarray*}
\int_0^{t^{-1}}t\lambda^{-1}\dd(E_{\lambda}\boldsymbol{v}^{\xi},\boldsymbol{v}^{\xi})&=&t\int_0^{t^{-1}}\int_{\lambda}^{\infty}\alpha^{-2}\dd\alpha\dd(E_{\lambda}\boldsymbol{v}^{\xi},\boldsymbol{v}^{\xi})\\
&=&t\int_0^{\infty}\alpha^{-2}\int_0^{\alpha\wedge t^{-1}}\dd(E_{\lambda}\boldsymbol{v}^{\xi},\boldsymbol{v}^{\xi})\\
&\leq&c t\int_0^{\infty}\alpha^{-2}(\alpha\wedge t^{-1})^{d/2+1}\dd\alpha.
\end{eqnarray*}
Similarly, the second part can be estimated by
\begin{eqnarray*}
\int_{t^{-1}}^1\lambda^{-2}\dd(E_{\lambda}\boldsymbol{v}^{\xi},\boldsymbol{v}^{\xi})&=&2\int_{t^{-1}}^1\int_{\lambda}^{\infty}\alpha^{-3}\dd\alpha\dd(E_{\lambda}\boldsymbol{v}^{\xi},\boldsymbol{v}^{\xi})\\
&\leq&2c\int_{t^{-1}}^{\infty}\alpha^{-3}(\alpha\wedge 1)^{d/2+1}\dd\alpha,
\end{eqnarray*}
which diverges logarithmically for $d=2$ and is bounded by a positive
constant for $d=3$. Therefore, combining these computations with the
identities~\eqref{eqn:estimate2} and~\eqref{eqn:spectral} the
estimate~\eqref{eqn:estimate} follows.

It remains to prove the spectral representation~\eqref{eqn:spectral}. 
In order to take advantage of the
spectral theorem, we would like to express the projected remainder
in the form $\mathbb{M}\psi_1(\boldsymbol{\mathcal{L}})\boldsymbol{v}^{\xi}
\psi_2(\boldsymbol{\mathcal{L}})\boldsymbol{v}^{\xi}$ for some bounded continuous
functions $\psi_1$ and $\psi_2$. However, for this we would need the components of
$\eta$ to be stationary, which is not the case. Furthermore, we
would want to use $\psi_1(x) = x^{-1}$ which is unbounded at zero.

Inspired by the computations in \cite{Pa1}, we can find a remedy for both of these
obstructions; namely we consider the function
$$R^{\omega,\delta}_t:=-\eta_{\delta}(X_t^{\omega},\omega)+\eta_{\delta}(0,\omega),$$ 
where $\eta_{\delta}$ is defined in analogy to $\eta$ with the difference that it corresponds to a different auxiliary problem, modified by a zero order term. This modified auxiliary problem reads as follows: 
Find $\eta^{\xi}_{\delta}(\cdot,\omega)\in C(\R^d)\cap
H^1_{\text{loc}}(\R^d)$, $\delta>0$ such that for $\mathcal{P}$-a.e.
$\omega\in\Gamma$, the random field
$\{\eta^{\xi}_{\delta}(x,\omega),(x,\omega)\in\R^d\times\Gamma\}$ is
stationary with respect to $\mathcal{P}$ with
$\mathbb{M}\eta_{\delta}^{\xi}(x,\cdot)=0$ for every $x\in\R^d$ and
satisfies
\begin{equation*}
\delta\eta_{\delta}^{\xi}(\cdot,\omega)-\nabla\cdot\kappa(\cdot,\omega)(\xi+\nabla\eta^{\xi}_{\delta}(\cdot,\omega))=0\quad\text{in }\R^d 
\end{equation*}
for $\mathcal{P}$-a.e. $\omega\in\Gamma$.
We refer to  \cite{Pa1} for the proof of existence and uniqueness of
$\eta^{\xi}_{\delta}$, $\delta>0$. Note that
$\eta_{\delta}^{\xi}(0,\omega)\neq 0$ in general so that we have for
the projected modified remainder the expression
\begin{equation}\label{eqn:modified1}
\overline{\mathbb{\EW}}(R^{\cdot,\delta}_t\cdot\xi)^2=\overline{\mathbb{\EW}}(\eta_{\delta}^{\xi}(X_t^{\cdot}))^2
-2\overline{\mathbb{\EW}}\eta_{\delta}^{\xi}(X_t^{\cdot},\cdot)\eta_{\delta}^{\xi}(0,\cdot)+\overline{\mathbb{\EW}}(\eta_{\delta}^{\xi}(0,\cdot))^2.
\end{equation} 

The equivalent formulation on $L^2(\Gamma)$ of the modified auxiliary problem for the direction
$\xi\in\R^d$ reads as follows: Find the
unique solution $\boldsymbol{\eta}_{\delta}^{\xi}\in L^2(\Gamma)$ of
the elliptic equation
\begin{equation*}
\delta\boldsymbol{\eta}_{\delta}^{\xi}-\boldsymbol{D}\cdot\boldsymbol{\kappa}(\xi+\boldsymbol{D}\boldsymbol{\eta}_{\delta}^{\xi})=0\quad\text{in }\Gamma.
\end{equation*}
In particular, the function $\boldsymbol{v}^{\xi}$ was chosen such that
$\boldsymbol{v}^{\xi}=(\delta+\boldsymbol{\mathcal{L}})\boldsymbol{\eta}^{\xi}_{\delta}
= \boldsymbol{\mathcal{L}}\boldsymbol{\eta}^{\xi}$.

We will now only need to show that
the modified remainder for fixed $\delta$ and fixed $t$ can be written
in the form $\mathbb{M}\psi_1(\boldsymbol{\mathcal{L}})\boldsymbol{v}^{\xi}
\psi_2(\boldsymbol{\mathcal{L}})\boldsymbol{v}^{\xi}$, 
where $\psi_1(x) =
2(x+\delta)^{-1}$ and $\psi_2(x) = 2e^{-tx}(x+\delta)^{-1}$. In fact, the
introduction of the zero order perturbation removes the singularity 
coming from the term $x^{-1}$.

The first term on the right-hand side of~\eqref{eqn:modified1} may equivalently be written as
\begin{equation*}
\overline{\EW}(\eta_{\delta}^{\xi}(X_t^{\cdot},\cdot))^2=\overline{\EW}(\eta_{\delta}^{\xi}(0,Y_t^{\cdot}))^2=\mathbb{M}(\eta_{\delta}^{\xi}(0,\cdot))^2,
\end{equation*}
where we have used the stationarity of the environment as viewed from
the particle with respect to $\overline{\mathbb{P}}$ and the
stationarity of the random field
$\{\eta_{\delta}^{\xi}(x,\omega),(x,\omega)\in\R^d\times\Gamma\}$ with
respect to $\mathcal{P}$, respectively.
Therefore, treating the second term on the right-hand side
of~\eqref{eqn:modified1} we obtain 
\begin{equation}\label{eqn:key}
\overline{\mathbb{\EW}}(R^{\cdot,\delta}_t\cdot\xi)^2=2\mathbb{M}(\eta_{\delta}^{\xi}(0,\cdot))^2-2\mathbb{M}\eta_{\delta}^{\xi}(0,\cdot)T_t^{\cdot}\eta_{\delta}^{\xi}(0,\cdot),
\end{equation}
where $\{T_t^{\omega},t\geq 0\}$ denotes the strongly continuous
semigroup on $L^2(\R^d)$ associated with $(\E^{\omega,1},H^1(\R^d))$
which satisfies 
$$\EW_0^{\omega}\eta_{\delta}^{\xi}(X_t^{\omega},\omega)=T_t^{\omega}\eta_{\delta}^{\xi}(0,\omega).$$ 

Going back to (\ref{eqn:key}), respectively the corresponding identity
on $L^2(\Gamma)$, we have thus for the first term on the right-hand side
\begin{equation*}
2\mathbb{M}(\boldsymbol{\eta}_{\delta}^{\xi})^2=2\mathbb{M}(\delta+\boldsymbol{\mathcal L})^{-1}\boldsymbol{v}^{\xi}(\delta+\boldsymbol{\mathcal L})^{-1}\boldsymbol{v}^{\xi}=2\int_0^{\infty}(\delta+\lambda)^{-2}\dd(E_{\lambda}\boldsymbol{v}^{\xi},\boldsymbol{v}^{\xi}),
\end{equation*}
whereas the second term may be written in the form 
\begin{equation*}
2\mathbb{M}(\delta+\boldsymbol{\mathcal L})^{-1}\boldsymbol{v}^{\xi}e^{-t\boldsymbol{\mathcal{L}}}(\delta+\boldsymbol{\mathcal L})^{-1}\boldsymbol{v}^{\xi}=2\int_0^{\infty}(\delta+\lambda)^{-2}e^{-t\lambda}\dd(E_{\lambda}\boldsymbol{v}^{\xi},\boldsymbol{v}^{\xi}).
\end{equation*}
Altogether we have obtained
\begin{equation*}
\overline{\EW}(R_t^{\cdot,\delta}\cdot\xi)^2=2\int_0^{\infty}(1-e^{-\lambda t})(\delta+\lambda)^{-2}\dd(E_{\lambda}\boldsymbol{v}^{\xi},\boldsymbol{v}^{\xi})
\end{equation*}
and the right-hand side of this equality converges as $\delta\rightarrow 0$ so that
the spectral representation~\eqref{eqn:spectral} holds.
\end{proof}
We would like to remark that it appears that the spectral properties of
operators with stationary coefficients and the stationarity of the
corresponding fields (as in the proof of Theorem~\ref{thm:Kipnis}) might
be very closely related in general. However, analysing this potential connection
that is touched by the work~\cite{Pa1}
is left for the future studies.
\section{Conclusion}\label{section6}
We have derived Feynman-Kac formulae for the forward problem of electrical im-pedance tomography and studied the interconnection between these formulae and stochastic homogenization. Using the properties of the underlying diffusion processes and some new spectral estimates from \cite{Gloria,GloriaNeukamm} we have then obtained a bound on the speed of convergence of the projected mean-square displacement of the processes. These results provide the theoretical foundation for the development of new scalable continuum Monte Carlo homogenization schemes. 

Both, the homogenization of the forward model for the complete electrode model and the stochastic numerical approximation of the effective conductivity have direct applications in EIT anomaly detection problems for random heterogeneous background media, cf. \cite{Simon}.

\section*{Acknowledgments} 
The research of M.~Simon was supported by the Deutsche
Forschungsgemeinschaft (DFG) under grant HA 2121/8 -1 583067.
This work is part of M.~Simon's Ph.D thesis, who would like to express his gratitude to his 
advisor Prof.~Martin Hanke for his guidance and continuous support. He
would also like to thank Prof.~Lassi P\"{a}iv\"{a}rinta for the kind
invitation to the Department of Mathematics and Statistics at the
University of Helsinki, where part of the work was carried out.
The research of P.~Piiroinen was supported by Academy of Finland (AF) under Finnish
Centre of Excellence in Inverse Problems Research 2012--2017,
decision number 250215. He has also been supported by an AF project,
decision number 141075.
Both authors would like to thank Prof. Antoine Gloria and Prof. Elton Hsu for carefully reading parts of this manuscript and for their insightful comments and suggestions.


\end{document}